\newcommand{\introthmname}{}
\newtheorem{introthminn}{\introthmname}
\DeclareFontFamily{U}{cbgreek}{}
\DeclareFontShape{U}{cbgreek}{m}{n}{
	<-6>    grmn0500
	<6-7>   grmn0600
	<7-8>   grmn0700
	<8-9>   grmn0800
	<9-10>  grmn0900
	<10-12> grmn1000
	<12-17> grmn1200
	<17->   grmn1728
}{}
\DeclareFontShape{U}{cbgreek}{bx}{n}{
	<-6>    grxn0500
	<6-7>   grxn0600
	<7-8>   grxn0700
	<8-9>   grxn0800
	<9-10>  grxn0900
	<10-12> grxn1000
	<12-17> grxn1200
	<17->   grxn1728
}{}
\DeclareRobustCommand{\Qoppa}{%
	\text{\usefont{U}{cbgreek}{\normalorbold}{n}\symbol{21}}%
}
\newcommand{\normalorbold}{%
	\ifnum\pdf@strcmp{\math@version}{bold}=\z@ bx\else m\fi
}
\DeclareSymbolFont{extraitalic}      {U}{zavm}{m}{it}
\DeclareMathSymbol{\Qopa}{\mathord}{extraitalic}{161} 
\DeclareMathSymbol{\Sampi}{\mathord}{extraitalic}{165} 
\DeclareMathSymbol{\sampi}{\mathord}{extraitalic}{166} 
\DeclareMathSymbol{\Stigma}{\mathord}{extraitalic}{167} 
\DeclareMathSymbol{\stigma}{\mathord}{extraitalic}{168} 
\newcommand{\qopa}[0]{
\raisebox{-0.2ex}{\scaleobj{1.1}{\Qoppa}}
}
      \newcommand{\pluma}[0]{\raisebox{-0.35ex}{\scaleobj{0.65}{$\textpmhg{i}$}}} 
      \newcommand{\plumaa}[0]{\raisebox{-0.35ex}{\scaleobj{0.65}{$\textpmhg{ii}$}}}
      \newcommand{\plumaaa}[0]{\raisebox{-0.35ex}{\scaleobj{0.65}{$\textpmhg{iii}$}}}
\newtheorem{thmx}{Theorem}
\newtheorem{corx}[thmx]{Corollary}
\newenvironment{customthm}[1]
{\innercustomthm}
{\endinnercustomthm}
\newtheorem{theorem}{Theorem}
\newcommand{\transv}{\mathrel{\text{\tpitchfork}}}
\newcommand{\tpitchfork}{%
  \vbox{
    \baselineskip\z@skip
    \lineskip-.52ex
    \lineskiplimit\maxdimen
    \m@th
    \ialign{##\crcr\hidewidth\smash{$-$}\hidewidth\crcr$\pitchfork$\crcr}
  }%
}
\newtheorem{definition}[theorem]{Definition}
\newtheorem{corollary}[theorem]{Corollary}
\newtheorem{lemma}[theorem]{Lemma}
\newtheorem{remark}[theorem]{Remark}
\newtheorem{observation}[theorem]{Observation}
\newtheorem{fact}[theorem]{Fact}
\newtheorem{notation}[theorem]{Notation}
\newtheorem{proposition}[theorem]{Proposition}
\newtheorem{question}{Question}
\newtheorem{conjecture}{Conjecture}
\newtheorem{claim}[theorem]{Claim}
\newcommand{\jet}[0]{\mathsf{J}^{1}}
\newcommand{\loc}[1]{\big|_{#1}}
\newenvironment{subproof}[1][\proofname]{%
\begin{proof}[#1]%
	}{%
\end{proof}%
}
\newenvironment{subsubproof}[1][\proofname]{%
\begin{proof}[#1]%
}{%
\end{proof}%
}
\newcommand{\U}[0]{\mathbb{U}}
\newcommand{\nin}[0]{\notin}
\newcommand{\nn}[0]{\mathsf{N}}
\newcommand{\N}[0]{\mathbb{N}}
\renewcommand{\U}[0]{\mathcal{U}}
\newcommand{\V}[0]{\mathcal{V}}
\newcommand{\W}[0]{\mathcal{W}}
\title[Minimality of the compact-open topology]{The compact-open topology on the diffeomorphism or homeomorphism group of a smooth manifold without boundary is minimal in almost all dimensions}
\renewcommand{\emph}[1]{\textbf{#1}}
\newcommand{\R}[0]{\mathbb{R}}
\newcommand{\rl}[1]{|#1|} 
\newcommand{\hh}[0]{\mathfrak{H}}
\renewcommand{\gg}[0]{\mathfrak{G}}
\newcommand{\PS}[2]{\hh_{\rl{#2}}}   
\renewcommand{\SS}[2]{\hh_{#2}} 
\renewcommand{\emph}[1]{#1} 
\newcommand{\Sp}[0]{\mathsf{Sp}}
\newcommand{\Tsp}[0]{\mathsf{Tsp}}
\author{J. de la Nuez Gonz\'alez }
\email{jnuezgonzalez@gmail.com}
\address{Korea Institute for Advanced Study (KIAS)}
\date{\today}
\thanks{Work supported by Samsung Science and Technology Foundation under Project Number SSTF-BA1301-51 and by KIAS individual grant MG084001.}
\subjclass{57S05,22A05}
\begin{document}
\begin{abstract}
We show that for any connected smooth manifold $M$ of dimension different from $3$ the restriction of the compact-open topology to the diffeomorphism group of $M$ is minimal, i.e., the group does not admit a strictly coarser Hausdorff group topology. This implies the minimality of the compact-open topology on the homeomorphism group of $M$ in all dimensions different from $3$ and $4$. In those cases for which in addition to all of this automatic continuity is known to hold, such as when $M$ is closed, one can conclude that the compact-open topology is the only separable Hausdorff group topology on the homeomorphism group.
\end{abstract}

\maketitle

\renewcommand{\H}[0]{\mathcal{H}}
\newcommand{\G}[0]{\mathcal{G}}
\newcommand{\T}[0]{\mathfrak{t}}
\newcommand{\tc}[0]{\T_{co}}
\newcommand{\nd}[0]{\mathcal{N}_{\T}(1)}
\newcommand{\cs}[0]{\mathcal{CS}}
\numberwithin{fact}{section}
\numberwithin{definition}{section}
\numberwithin{remark}{section}
\newcommand{\ninf}[1]{\lVert #1 \rVert_{\infty}} 
\newcommand{\norm}[1]{\lVert #1 \rVert_{2}} 
\newcommand{\J}[0]{\hat{I}} 
 \newcommand{\w}[0]{\mathcal{W}}

\numberwithin{theorem}{section}
\numberwithin{lemma}{section}
\numberwithin{corollary}{section}
\numberwithin{observation}{section}
\numberwithin{claim}{section}
\numberwithin{notation}{section}
\numberwithin{proposition}{section}

\setcounter{section}{-1}
\section{Introduction}

  We begin by recalling the following definition:
  \begin{definition}
  Let $G$ be a group and $\T$ a Hausdorff group topology on $G$. We say that $\T$ (resp. the topological group $(G,\T)$) is minimal if $G$ admits no Hausdorff group topology strictly coarser than $\T$.
  \end{definition}
  This definition abstracts a key feature of compact Hausdorff topological groups and has received substantial attention in the literature. Minimal topological groups are a rich subject with some surprising connections with other fields, such as logic \cite{ben2016weakly} or number theory \cite{shlossberg2023minimality}. We refer the reader to the comprehensive survey \cite{dikranjan2014minimality} for the broader context and additional questions.
  
  Given a topological space $X$, the compact-open topology on the homeomorphism group of $\H(X)$ of $X$ is the topology generated by all subsets of the form $[K,U]=\{f\in G\,|\,f(K)\subseteq U\}$, for $K\subseteq X$ compact and $U\subseteq X$ open in $X$. The following classical result is due to Arens \cite{arens1946topologies} in the non-compact case. 
  \begin{fact}
  \label{f: arens}Let $X$ be a Hausdorff, locally compact, and locally connected topological space. Then the compact-open topology is a group topology on $\H(X)$.
  \end{fact}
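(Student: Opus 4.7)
The plan is to verify separately that composition $\mu:(f,g)\mapsto f\circ g$ and inversion $\iota:f\mapsto f^{-1}$ are continuous, and it suffices to check this on subbasic opens $[K,U]$. For composition, I would fix a subbasic $[K,U]$ containing $fg$, so that $g(K)\subseteq f^{-1}(U)$. Since $g(K)$ is compact and $f^{-1}(U)$ is open, Hausdorffness combined with local compactness lets me interpolate an open $V$ with $g(K)\subseteq V\subseteq \overline{V}\subseteq f^{-1}(U)$ and $\overline{V}$ compact; the product neighborhood $[\overline{V},U]\times [K,V]$ of $(f,g)$ will then map into $[K,U]$ under $\mu$. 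Local connectedness is not needed here.

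For inversion, given $f$ with $f^{-1}\in [K,U]$, i.e.\ $K\subseteq f(U)$, I would produce the required neighborhood of $f$ pointwise over $K$. For each $p\in K$, local compactness and local connectedness together supply a connected open neighborhood $V_p$ of $f^{-1}(p)$ with $\overline{V_p}$ compact and $\overline{V_p}\subseteq U$, and then a connected open neighborhood $W_p$ of $p$ with $\overline{W_p}\subseteq f(V_p)$; I pick any $q_p\in V_p$ with $f(q_p)\in W_p$ and consider
\[
\mathcal{N}_p \;=\; [\partial V_p,\, X\setminus \overline{W_p}] \;\cap\; [\{q_p\}, W_p].
\]
On this basic neighborhood of $f$, every $g$ satisfies $g(\partial V_p)\cap \overline{W_p}=\emptyset$ and $g(q_p)\in W_p$. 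Since $g$ is a homeomorphism, $X\setminus g(\partial V_p)=g(V_p)\sqcup g(X\setminus \overline{V_p})$ is a disjoint union of open sets, so the connected set $W_p$ lies entirely in one of them; the datum $g(q_p)\in W_p\cap g(V_p)$ then forces $W_p\subseteq g(V_p)\subseteq g(U)$. Compactness of $K$ yields a finite subcover $W_{p_1},\dots,W_{p_n}$, and on $\bigcap_i \mathcal{N}_{p_i}$ one has $K\subseteq g(U)$, i.e.\ $g^{-1}\in [K,U]$.

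The main obstacle is exactly this inversion step: one has to convert the closedness-type control $g(\partial V_p)\cap\overline{W_p}=\emptyset$ on a boundary into the positive information that $g(V_p)$ covers a whole neighborhood of $p$. Local connectedness is precisely what makes the disjoint-union / connectedness argument go through. Without such a hypothesis (and in the absence of some substitute) inversion can genuinely fail to be continuous in the compact-open topology, which is why the hypothesis appears in the statement.
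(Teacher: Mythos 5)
Your argument is correct. Note that the paper does not prove this fact at all: it is stated as a classical result with a citation to Arens, so there is no internal proof to compare against; what you wrote is essentially the standard Arens-type argument, with the labor split exactly where it should be (composition needs only local compactness and Hausdorffness via the interpolation $g(K)\subseteq V\subseteq\overline{V}\subseteq f^{-1}(U)$; local connectedness is used only for inversion). Two small points are worth making explicit: $\partial V_p=\overline{V_p}\setminus V_p$ is compact, so $[\partial V_p,\,X\setminus\overline{W_p}]$ really is a subbasic set, and $f$ itself belongs to $\mathcal{N}_p$ because $\overline{W_p}\subseteq f(V_p)$ together with injectivity of $f$ gives $f(\partial V_p)\cap\overline{W_p}=\emptyset$ (and one may simply take $q_p=f^{-1}(p)$). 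Granting these, the key inversion step — the decomposition $X\setminus g(\partial V_p)=g(V_p)\sqcup g(X\setminus\overline{V_p})$ into disjoint open sets, connectedness of $W_p$, and the anchor point $g(q_p)\in W_p\cap g(V_p)$ forcing $W_p\subseteq g(V_p)\subseteq g(U)$, followed by a finite subcover of $K$ — is complete and correct.
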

  
  It is a natural problem whether the compact-open topology on the homeomorphism groups of familiar spaces under the assumptions of \ref{f: arens} is minimal. The following is formulated Question 4.28 in \cite{dikranjan2014minimality} in the compact case (minimality of the homeomorphism group is referred to there as $M$-compactness).
     \newcommand{\D}[0]{\mathcal{D}}
 \begin{question}
 	\label{q: basic}For which manifolds $M$ is the compact-open topology on $\H(M)$ minimal?
 \end{question}
  
  It is a result of Gartside and Glynn \cite{gartside2003autohomeomorphism} that the compact-open topology is minimal in the case of $1$-manifolds (in fact the minimum Hausdorff group topology on the group). 
  If $dim(M)>1$ and  $\partial M\neq\emptyset$, then minimality does not hold for the compact-open topology. A witness of this fact is given in \cite{chang2017minimum} as a particular example of a more general phenomenon, although in this very specific context the behavior can be traced back to \cite{gamarnik1991minimality}. For completeness, we indicate the proof. Consider the restriction homomorphism $\rho:\H(M)\to \H(int(M))$, where $int(M)=M\setminus\partial M$, and let $\T_{co}^{\partial}$ denote the preimage by $\rho$ of the compact-open topology on $\H(int(M))$. Then $\T_{co}^{\partial}$ is a Hausdorff group topology strictly coarser than $\tc$. For a smooth manifold $M$ it is easy to see that the inclusion remains proper if we restrict to the group of diffeomorphisms.

  In this work we settle Question \ref{q: basic} for a large family of manifolds without boundary. In fact, we show something somewhat more general. Given a manifold $M$ we denote by $\H_{c0}(M)$ is the subgroup of all elements of the homeomorphism group $\H(M)$ that are isotopic to the identity through a compactly supported topological ambient isotopy (see Section \ref{s: preliminaries}).
  
  \newcommand{\bodythmhomeos}[0]{
   Assume we are given a connected manifold $M$ and $\gg\leq \H(M)$ as follows:
   \begin{itemize}
   	\item $M$ admits a smooth structure,
   	\item $\partial M=\emptyset$, 
   	\item $dim(M)\nin\{3,4\}$,
   	\item $\H_{c0}(M)\leq\gg$.
   \end{itemize}
   Then the restriction of the compact-open topology to $\gg$ is minimal. 
   } 
 
  \begin{thmx}
  	\label{c: main} \bodythmhomeos  
  \end{thmx}
  We deduce Theorem \ref{c: main} from a related result for diffeomorphism groups, Theorem \ref{t: main} below, valid also in dimension $4$ and whose proof contains the vast majority of the mathematical content of this article. Before stating it, let us indicate how Theorem \ref{c: main} complements other results in the literature.
    
  It was shown in \cite{mann2016automatic}, building upon the dimension $1$ and $2$ cases in \cite{rosendal2006automatic} and \cite{rosendal2008automatic} respectively, that for any compact manifold $M$ the group $\H_{0}(M)$, equipped with the compact-open topology, satisfies the automatic continuity property, 
  i.e., any algebraic homomorphism from said group to a separable topological group is continuous (the property then passes to any supergroup of $\H_{0}(M)$ inside $\H(M)$). This was later generalized by the same author in \cite{mann2020automatic} to certain groups of homeomorphisms of non-compact manifolds. By combining these results with Theorem \ref{t: main}, we obtain the following:
  
  \begin{corx}
  \label{c: uniqueness}Let $M'$ be the interior of a smooth connected compact manifold of dimension $2$ or at least $5$ and let $M$ satisfy one of the following conditions:
  \begin{itemize}
  	\item if $dim(M')\geq 5$, then $M:=M'$,
  	\item if $dim(M')=2$, then $M:=M'\setminus F$, where $F\subset M$ is a finite set or the union of a finite set and a Cantor set.
  \end{itemize}
  Then the compact-open topology is the only separable Hausdorff group topology on the group $\H(M)$.
 
  If $M$ is closed, then the restriction of the compact-open topology is the only separable Hausdorff group topology on any group $\gg$ with $\H_{0}(M)\leq\gg\leq\H(M)$. In particular, any algebraic homomorphism from $\H_{0}(M)$ to a separable Hausdorff topological group has either a trivial image or is a homeomorphic embedding. 
  \end{corx}
  The last clause relies on the simplicity of $\H_{0}(M)$ for closed $M$, which follows, for instance, from Corollary 1 and Lemma 4 in \cite{whittaker1963isomorphic} and Corollary 1.3 in \cite{edwards1971deformations}.
   
  Note that the compact-open topology was already known to be the unique complete separable group topology on the homeomorphism group of a compact manifold \cite{kallman1986uniqueness}, a fact that can also be recovered as a corollary of automatic continuity \cite{mann2016automatic}. The result in \cite{mann2020automatic} is stated in terms of the group of homeomorphisms of a manifold $M$ preserving a set $F$ as in the statement. However, as discussed there, in dimension $2$ the restriction map is an isomorphism of topological groups between the former and $\H(M\setminus F)$.
    
  Let us conclude by stating our main technical result, whose proof will occupy most of the paper. Given a smooth manifold $M$, we denote by $\D_{c0}(M)$ the subgroup of its diffeomorphism group $\D(M)$ consisting of all those $h\in\D(M)$ diffeotopic to the identity through a compactly supported diffeotopy (see Section \ref{s: preliminaries} for detailed definitions).  

  \newcommand{\bodythmdiffeos}[0]{
   Assume a we are given smooth connected manifold $M$ and $\gg\leq\D(M)$ satisfying:
   \begin{enumerate}[(a)]
   	\item $\partial M=\emptyset$, 
   	\item \label{dc0inside}$dim(M)=2$ or $dim(M)\geq 4$,
   	\item \label{cdc0inside}$\D_{c0}(M)\leq\gg$.
   \end{enumerate}
    Then the restriction of the compact-open topology to $\gg$ is minimal. 
  }
 
  \begin{thmx}
  \label{t: main} \bodythmdiffeos
  \end{thmx}

  \paragraph{\bf{Outline of the paper}}
        
        Section \ref{s: preliminaries} establishes some notation and recalls some basic facts, with which many readers will be familiar.
        Section \ref{s: Morse} takes place in a general setting in which we are given an isotopy $H: N\times I\to M$ of an $(m-1)$-dimensional manifold $N$ into an $m$-dimensional manifold $M$ and some fixed embedded $(m-1)$-dimensional sub-manifold $L\subseteq M$. We show that under some conditions on $H$ up an arbitrarily small perturbation one can always assume that outside a discrete set of points $\mathcal{P}\subseteq M\times I$ the embedding $H(-,t):N\to M$ is transverse to $L$, whereas at each point in $\mathcal{P}$ some Morse-like transition admitting a simple description occurs (Lemma \ref{l: morse for isotopies}). If $H=(G_{t}\circ\iota)_{t\in I}$ for some fixed embedding of $N$ in $M$ and some compactly supported diffeotopy $G$ of $Id_{M}$, then this can be done by perturbing $G$. 
        
        We develop this further on this in Section \ref{s: isotopy operations}. In particular, Corollary \ref{c: non-destructive isotopies}, establishes a connection between Section \ref{s: Morse} and the problem of decomposing elements of $\D(M)$ as products of elements with certain properties. We also show in Lemma \ref{l: commutation move} that certain condition on the sequence of transitions of the isotopy $H$ in question can always be assumed to hold. It will be again crucial to show this can be done by modifying the diffeotopy $G$.
         
        Sections \ref{s: neighbourhoods are rich} to \ref{conclusion} are devoted to the proof of Theorem \ref{t: main}, which is roughly structured as follows. We fix $M$ of dimension $m=2$ or $m\geq 4$ and $\gg\leq\D(M)$ as in the statement of the theorem, as well as some Hausdorff group topology $\T$ on $\gg$ strictly coarser than the restriction of the compact-open topology.
        We will write $\nd$ for the collection of neighbourhoods of the identity in $\tau$.  
        The main result of Section \ref{s: neighbourhoods are rich} is Corollary \ref{c: control on vertices}, which states that for any neighbourhood $\V\in\nd$ there is some sufficiently rich family of embedded simplicial complexes $\Delta$ of codimension $1$ such that for each of those $\Delta$ the set $\V$ contains some suitably defined subgroup $\PS{\hh}{\Delta}$ of the pointwise stabilizer of $\Delta$ in $M$ (Definition \ref{d: stabilizers}). 
        In Section \ref{s: compression moves} we use this to conclude that given any $\V\in\nd$ and any embedded arc $\alpha$ in $M$ there is some neighbourhood $U$ of $\alpha$ in $M$ such that any element diffeotopic to the identity through a diffeotopy supported in $U$ is in $\mathcal{V}$ (Corollary \ref{c: 0-slices}). It is here that the assumption $m\neq 3$ becomes relevant. 
        
        In Section \ref{conclusion} we prove Theorem \ref{t: main} by showing that $\T$, which was initially assumed above to be Hausdorff, in fact cannot be. More precisely, in Proposition \ref{p: slice induction} we show that any element $g\in\gg$ which is diffeotopic to the identity in some embedded ball $D$ must belong to the intersection of all $\V\in\nd$. It suffices to show the existence of some universal constant $N>0$ such that for any symmetric $\U\in\nd$ we always have $g\in\mathcal{U}^{N}$.
         We know by then any such $\U$ contains $\PS{\hh}{\Delta}$ for some embedded simplicial complex $\Delta\subseteq M$. We start by showing one can always assume $\Delta^{(m-2)}\cap D=\emptyset$ (Lemma \ref{l: cleaning}), and that it is in fact possible to assume $\U$ contains a larger ``flexible" stabilizer $\SS{\hh}{\Delta}$ (Lemma \ref{l: different stabilizers}). This will allow us to apply the results of Section \ref{s: Morse} and \ref{s: isotopy operations}.                  
         
         An inductive argument that uses Corollary \ref{c: 0-slices} for its base case allows us to assume that for any embedded ball $F\subseteq M$ of proper codimension there is some neighbourhood $V$ of $F$ such that any element diffeotopic to the identity by a diffeotopy supported in $V$ is in $\U$. The tools from Sections \ref{s: Morse} and \ref{s: isotopy operations}, then allow us to generate $g$ as a suitable product of elements in $\SS{\hh}{\Delta}$ and elements of the form above. The inductive hypotheses on the topology $\T$ are somewhat delicate here and force us to actually prove the results of Sections \ref{s: neighbourhoods are rich} under weaker assumptions than described in the previous paragraph.
        
        In Section \ref{s: proof main results} we deduce Theorem \ref{c: main} from Theorem \ref{t: main}. This involves a brief callback to Section \ref{s: neighbourhoods are rich} and simple applications of results available in the literature. We close with some comments and remarks in Section \ref{s: questions}.
             
        We bring to the reader's attention that the proofs in the text sometimes contain auxiliary sublemmas with their proofs. We use the shading of the closing square to indicate the degree of nesting of a given proof.  
        
\section{Preliminaries}
  \label{s: preliminaries}
  
  \subsection*{Generalities}
  
  Let $M$ be a connected topological manifold. If we fix a metric $d$ on $M$ compatible with its topology, then a base of neighbourhoods of the identity for the compact-open topology $\tau_{co}$ on $\H(M)$ is given by the collection of sets
  $$
  \V_{K,\epsilon} :=\{g\in G\,|\,\forall p\in K\,\,d(p,g\cdot p),d(p,g^{-1}\cdot p)<\epsilon\}
  $$
  where $K$ ranges over all compact subsets of $M$ and $\epsilon$ over all positive reals. 
   
  Sometimes we might only be interested in the set $\V_{\epsilon}:=\V_{M,\epsilon}$. For any subset $A\subseteq M$ and $\epsilon>0$ we let $\nn_{\epsilon}(A):=\{p\in M\,|\,d(p,A)<\epsilon\}$.
  When working with $\gg\leq\H(M)$ we will use the same notation to denote the restriction of the topology $\tau_{co}$ and the neighbourhoods above to $\gg$.
  
  From now on, and unless we explicitly use the term ``topological manifold", all the manifolds considered will be assumed to be smooth and equipped with a distance function $d$ induced by some complete Riemannian metric on the manifold.  
  
  Given $g\in\H(M)$ we write $supp(g):=\{x\in M\,|\,g(x)\neq x\}$. 
  We will often use the fact that any smooth manifold $M$ is the union of an ascending chain of compact submanifolds $M_{0}\subseteq\mathring{M}_{1}\subseteq M_{1} \dots$ (see, for instance, the remark after 4.3.2 in \cite{wall2016differential}).
  We will use the symbol $\circ$ to denote composition between heterogeneous smooth maps, omitting it when composition takes place between elements living in the same diffeomorphism (or homeomorphism) group. Working in a group we use the notation $g^{h}:=h^{-1}gh$ wherever it is practical to do so.

 \subsection*{Transversality} Given a smooth map $f\in C^{\infty}(N,M)$ and a submanifold $L\subseteq N$, we say that $f$ is transverse to $L$ if and only if we have $im(Df\loc{p})+T_{q}L=T_{q}M$ for any point $p\in M$ such that $q:=f(p)\in L$, written as $f\transv L$. Recall that $f\transv L$ implies that $f^{-1}(L)$ is a submanifold of $M$ with the same codimension as that of $L$ in $N$.  
 If $N_{1},N_{2}\subseteq M$ are submanifolds, we say that they are in general position if $\iota\transv N_{2}$ where $\iota$ is the inclusion of $N_{1}$ in $M$.\footnote{Note that this relation is actually symmetric.}

  \subsection*{The Whitney topology and the $\mathcal{D}$-topology}
  \newcommand{\wt}[0]{\mathsf{W}}
  \newcommand{\swt}[0]{\mathsf{W}^{s}}
  Given smooth manifolds $N$ and $M$, we denote the strong Whitney topology (or simply the Whitney topology) 
  on $C^{\infty}(N,M)$ by $\wt(N,M)$. Recall that this is the union, for $r\geq 0$ of the restrictions to 
  $C^{\infty}(N,M)$ of the topologies $\wt^{r}(N,M)$ on $C^{r}(N,N)$ of locally uniform convergence in all derivatives of order at most $r$. A useful refinement of the Whitney topology on $C^{\infty}(N,M)$ is the so-called $\mathcal{D}$-topology, or $\D(N,M)$ (see \cite{michor2011manifolds} p.37). 
  \begin{definition}
  	\label{d: D-topology}Assume $\partial N=\emptyset$ and fix any sequence $(K_{n})_{n\in M}$ of compact submanifolds of $N$ with $K_{0}=\emptyset$, $K_{n}\subseteq\mathring{K}_{n+1}$ for all $n\geq 0$ and $\bigcup_{n\geq 0}K_{n}=N$. 
  	For any $n\geq 0$ let $\rho_{n}:C^{\infty}(N,M)\to C^{\infty}(N\setminus\mathring{K}_{n},M)$ be the obvious restriction map
  	 
  	Then a basis of neighbourhoods for $\mathcal{D}(N,M)$ is given by all the intersections of the form $\bigcap_{n\geq 0}\rho_{n}^{-1}(U_{n})$, where $U_{n}\in\wt(N\setminus \mathring{K}_{n},M)$.  
  \end{definition}
  
  We collect some basic facts below, where (\ref{whitney infinity}) follows easily from the definition and (\ref{d infinity}) from (\ref{whitney infinity}). We refer the reader to Chapter $2$ in \cite{hirsch2012differential}, and Chapter $4$ in \cite{michor2011manifolds} for more details. 
  \begin{fact}
  	\label{f: properties Whitney}
  	The following holds: 
  	\begin{enumerate}
  		\item \label{whitney baire}$C^{\infty}(N,M)$ endowed with either $\wt(N,M)$ or $\mathcal{D}(N,N)$ is a Baire space 
  		(\cite{michor2011manifolds}, pp. 34, 38).
  		\item \label{whitney rank}The following collections of maps are open in $\wt(N,M)$:
  		\begin{itemize}
  			\item the collection of maps whose differential has rank $\geq k$,
  			\item the collection of embeddings of $N$ in $M$ (\cite{hirsch2012differential}, Ch.2, Thm. 1.4),
  			\item the collection of proper maps from $N$ to $M$ (\cite{hirsch2012differential}, Ch.2, Thm. 1.5).
  		\end{itemize}
  		\item \label{whitney infinity} Let $L\subseteq\mathring{N}$ be a codimension $0$ submanifold of $N$, $f\in C^{r}(N,M)$ and $\W$ some neighbourhood of $f$ in $\wt^{r}(N,M)$, $r\geq 0$. Then there exists some neighbourhood $\V$ of $f_{\restriction \mathring{L}}$ in $\wt(\mathring{L},N)$ such that for any $h\in\V$ we have $f_{\restriction N\setminus\mathring{L}}\cup h\in \V \subseteq C^{r}(N,M)$.
  		\item \label{d infinity} Let $L\subseteq N\setminus\partial N$ be a codimension $0$ submanifold of $N$, $f\in C^{\infty}(N,M)$ and $\W$ some neighbourhood of $f$ in $\wt(N,M)$. Then there exists some neighbourhood $\V$ of $f_{\restriction \mathring{L}}$ in $\mathcal{D}(\mathring{L},N)$ such that for any $h\in\V$ we have $$f_{\restriction N\setminus\mathring{L}}\cup h\in\W\subseteq C^{\infty}(N,M).$$
  	\end{enumerate}  	 
  \end{fact}

  \subsection*{Isotopies}

  We denote by $I$ the interval $I=[-1,1]$. 
  Given manifolds $N,M$ a homotopy of $N$ in $M$ is a continuous map $H: N\times I\to M$. 
  If $M$ and $N$ are smooth manifolds, an isotopy of $M$ in $N$ is a smooth homotopy
  $H: N\times I\to M$ such that for any $t\in I$ the map $H_{t}:=H(-,t):N\to M$ is an embedding. We will refer to $I$ as the time component of $N\times I$ and denote the associated direction in the tangent space by $\partial_{t}$.

  Given elements $g,g'$ of $\D(M)$ ($\H(M)$), a diffeotopy (topological ambient isotopy) from $g$ to $g'$ is an isotopy (resp. homotopy) of $M$ in itself such that $G_{t}\in\D(M)$ (resp. $G_{t}\in\H(M)$) for all $t\in I$. We will also say that $G$ is a diffeotopy (topological ambient isotopy) of $g$. If instead of $I$ we have any other interval, we speak of a generalized isotopy. 
  
  For convenience, we will sometimes use the notation $(H_{t})_{t\in I}$ to present an isotopy. 
  Given an embedding $\iota:N\to M$ and a diffeotopy $G$ of $M$, we also write 
  $\iota^{*}(G)$ for the isotopy of $N$ in $M$ given by $(G_{t}\circ\iota)_{t\in I}$.  
 
  By the support of a homotopy $H$ of $N$ in $M$ we mean the set $$supp(H):=\{p\in N\,|\,\exists t\in I\,\,H_{t}(p)\neq H_{-1}(p)\}.$$ 
  One can check that $supp(H)$ is an open set and that
  if $H$ is a diffeotopy of $Id_{M}$, then $H(supp(H)\times I)=supp(H)$. 
  We say that $H$ is compactly supported if there is some compact set $K\subseteq M$ such that 
  $supp(H)\subseteq K$. We will use the term proper to refer to isotopies that are proper as continuous maps, i.e., which are such that the preimage of any compact set is compact, rather any other technical meaning.
  
  Given a diffeotopy $G: M\times I\to M$ we write
  $$\ninf{G}:=\sup\{\,d(G(p,t),p)\,|\,p\in M, t\in I\,\}.$$
  
  We let $\D_{c0}(M)$ the collection of elements of $\D(M)$ which are connected to $Id_{M}$ by a compactly supported diffeotopy and $\H_{c0}(M)$ the collection of elements of $\H(M)$ which are connected to $Id_{M}$ by a compactly supported topological ambient isotopy. It is easy to see that $\H_{c0}(M)\leq\H(M)$, and also that $\D_{c0}(M)\leq\D(M)$, by Facts \ref{c: horizontal composition isotopies} and \ref{f: concatenation} below.

   \begin{definition}
   	\label{d: small deformations are dense}We write $\w_{\epsilon}$ for the collection of elements of the form $G_{1}$, where $G:M \times I\to M$ is a diffeotopy of $Id_{M}$ with compact support such that $\ninf{G}<\epsilon$. Given a group topology $\T$ on 
    $\gg\leq\D(M)$ we say that small deformations are close to the identity in $\T$ if 
   	for any $\V\in\nd$ there is $\epsilon>0$ such that $\w_{\epsilon}\subseteq\V$.   
   \end{definition}
      
   \begin{remark}
   	\label{r: small deformations}Clearly, $\w_{\epsilon}\subseteq\V_{\epsilon}$, so that
   	if $\T\subseteq\tc$, then small deformations are close to the identity in $\T$. It follows from \ref{c: horizontal composition isotopies} below that $\w_{\epsilon}=\w_{\epsilon}^{-1}$. 
   \end{remark}
   
  \begin{fact}
  	\label{c: horizontal composition isotopies} If $G$ is a compactly supported diffeotopy, so is $(G_{t}^{-1})_{t\in I}$.
  	If $G^{1}$ and $G^{2}$ are compactly supported isotopies of $M_{1}$ in $M_{2}$ and of $M_{2}$ in $M_{3}$ respectively, then so is $(G^{2}_{t}\circ G^{1}_{t})_{t\in I}$. 
  \end{fact}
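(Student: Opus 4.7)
The plan is to reduce both assertions to standard smoothness principles by passing, for each isotopy $G$ involved, to the level-preserving ambient map $\tilde{G}(p,t):=(G_{t}(p),t)$ on the appropriate product with $I$. This converts the time-dependent family into a single smooth map, so that both claims become direct applications of the inverse function theorem and smoothness of composition, respectively.

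For the inverse diffeotopy claim, I would observe that $\tilde{G}:M\times I\to M\times I$ is a smooth bijection whose differential at $(p,t)$ is block-triangular with the identity on the time factor and the invertible $D_{p}G_{t}$ on the $M$-factor. The inverse function theorem then upgrades $\tilde{G}$ to a global diffeomorphism whose inverse is precisely $(q,t)\mapsto(G_{t}^{-1}(q),t)$, from which smoothness of the family $(G_{t}^{-1})_{t\in I}$ is immediate. Each $G_{t}^{-1}$ is a diffeomorphism by hypothesis, and $supp((G_{t}^{-1})_{t\in I})=supp(G)$ because $G_{t}$ and $G_{t}^{-1}$ share their fixed-point sets.

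For the composition claim, the map $(p,t)\mapsto G_{t}^{2}(G_{t}^{1}(p))$ equals $G^{2}\circ\tilde{G}^{1}$ and is hence smooth, while each time-slice is an embedding as a composition of embeddings. For the support, I would use that if $G_{t}^{2}\circ G_{t}^{1}(p)\neq G_{-1}^{2}\circ G_{-1}^{1}(p)$, then either $p\in supp(G^{1})$ or $G_{-1}^{1}(p)\in supp(G^{2})$, giving
$$
supp\bigl((G_{t}^{2}\circ G_{t}^{1})_{t\in I}\bigr)\;\subseteq\;supp(G^{1})\;\cup\;(G_{-1}^{1})^{-1}\!\bigl(supp(G^{2})\bigr).
$$
The main obstacle I would expect is establishing relative compactness of the second term on the right, which reduces to a properness property of the embedding $G^{1}_{-1}$ on the preimage of $\overline{supp(G^{2})}$; this is routinely available in the settings of the paper, since the embeddings appearing in the later sections are closed embeddings of the relevant submanifolds, under which hypothesis the preimage of a compact set is compact and the desired bound follows.
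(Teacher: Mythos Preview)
The paper states this as a Fact without proof, treating it as standard background; your argument via the level-preserving map $\tilde{G}(p,t)=(G_{t}(p),t)$ and the inverse function theorem is precisely the standard verification and is correct. One small imprecision: your equality $supp((G_{t}^{-1})_{t\in I})=supp(G)$ holds only when $G_{-1}=Id_{M}$; in general one gets $supp((G_{t}^{-1})_{t\in I})=G_{-1}(supp(G))$, which is still compact since $G_{-1}$ is a diffeomorphism, so the conclusion is unaffected. Your identification of the properness hypothesis needed to control $(G^{1}_{-1})^{-1}(\overline{supp(G^{2})})$ is apt and matches how the fact is actually invoked later in the paper, where the isotopies in play are proper.
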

  Given a generalized isotopy $H: N\times J\to M$, by a time reparametrization of $H$, we mean an isotopy of the form $H\circ (Id\times\lambda)$, where $\lambda:J\to I$ is an order-preserving smooth bijection. We will say that a property holds for $H$ up to time reparametrization if it holds for some time reparametrization of $H$. If $\lambda$ is a diffeomorphism on the interior of $J$, then we will say that the reparametrization is strict. 
  \begin{fact} [see 2.4 in \cite{wall2016differential}]
	  \label{f: concatenation}Given isotopies $H^{1}$ and $H^{2}$
	  of $N$ in $M$ with $H^{1}_{1}=H^{2}_{1}$, it is possible to replace $H^{i}$ by a strict time reparametrization, so that as a result the map
	  $H:N\times I\to M$ given by
	  	\begin{equation*}
   		H(p,t):=  
   		\begin{cases*}
   		H^{1}(p,2t+1) & if $t\leq 0$ \\
   		 H^{1}(p,2t-1)       & if $t\geq 0$
   	  \end{cases*}
   	\end{equation*}
	  is an isotopy. 
  \end{fact}
  We will refer to $H$ as a concatenation of $H^{1}$ and $H^{2}$, and indicate it by $H^{1}*H^{2}$ (ignoring non-uniqueness). 
 
  We say that an isotopy $F$ of $N$ in $M$ is covered by a diffeotopy $G$ of $Id_{M}$ if $F_{t}=G_{t}\circ F_{-1}$ for all $t\in I$. The following refinement of Thm. 2.4.2 in \cite{wall2016differential} is probably well-known, but we indicate a proof of the sake of completeness.
  \begin{lemma}
  	\label{l: isotopy extension}Let $N,M$ be manifolds of dimensions $m$ and $n$, respectively, and with $\partial M=\emptyset$, and let $H$ be a compactly supported isotopy of $N$ in $M$. Then $H$ is covered by some compactly supported diffeotopy $G$ of $Id_{M}$.
  	Moreover, if $$\epsilon=\sup\{\,\norm{DH\loc{(p,t)}(\partial_{t})}\,\,|\,\,(p,t)\in N\times I\,\},$$ 
  	then for any $\delta>0$ we can assume:
  	\begin{enumerate}
  		 \item \label{support}$supp(G)\subseteq \nn_{\delta}(H(supp(H)\times I))$,
  		 \item \label{slow speed} $G_{t}\in\V_{(1+t)(\epsilon+\delta)}$ for all $t\in I$,
  		 \item \label{general position} if $\partial N=\emptyset$ and 
  		 $N=N_{1}\cap N_{2}=\textstyle\bigcap_{i=1}^{2}(N_{i}\setminus \partial N_{i})$
  		 for two sub-manifolds $N_{1},N_{2}\subseteq M$ in general position, and $H_{t}(N)=N$ for all $t\in I$, then we can assume that $G_{t}(N_{i})=N_{i}$ for all $t\in I$ and $i=1,2$.
  	\end{enumerate}
  \end{lemma} 
  \begin{proof}
    For item (\ref{support}) it suffices to restrict the ambient manifold. Using Whitney's extension theorem \cite{whitney1992analytic} we might assume that $H$ is defined on $M\times\R$, as in the proof of 2.4.2 in \cite{wall2016differential}. We consider the map:  
     $$
  	    \Theta:N\times \R\to M, \quad\quad \Theta(p,t):=(H(p,t),t)
  	 $$
  	 and then define $\chi$ on the submanifold $N^{*}:=\Theta(N\times \R)\subseteq M\times\R$ by the formula:
  	$$\chi(\Theta(p,t))=D\Theta\loc{(p,t)}(\partial_{t}).$$
  	Given an extension $\xi$ of $\chi$ to $M\times\R$ which has constant time component and agrees with $\partial_{t}$ outside $K\times\R$ for some compact set $K$, the diffeotopy $G$ is obtained by integrating $\chi$ and then taking the $M$-component of the result.
    
    For (\ref{slow speed}) it suffices to show that we can take the norm of the $TM$-component $\xi^{M}$ of $\xi$ to be uniformly bounded by $(\epsilon+\delta)$ on $M\times\R$. This can be achieved by multiplying $\xi^{M}$ by a suitable smooth scalar function $\rho:M \times I\to [0,1]$ with support contained in a suitable neighbourhood of $N^{*}$. 
    
    For (\ref{general position}) notice that in this case $N^{*}=N\times\R$ and it suffices to ensure $\xi^{M}(p,t)\in T_{p}N_{i}\subseteq T_{(p,t)}M\times I$ for all $(p,t)\in M\times\R$. 
    Since $N_{1}$ and $N_{2}$ are in general position, we know that there are sets 
    $\mathcal{F}_{1},\mathcal{F}_{2}\subseteq\{1,\dots m\}$, some locally finite covering $\{U_{\lambda}\}_{\lambda\in\Lambda}$ of $M$ 
    and charts $\phi_{\lambda}:U_{\lambda}\cong (-1,1)^{m}$ for $\lambda\in\Lambda$ 
    such that either $U_{\lambda}\cap N=\emptyset$ or $\phi_{\lambda}(N_{l}\cap U_{\lambda})=(-1,1)\cap\{x_{i}=0\,|\,i\in\mathcal{F}_{l}\}$ for $l=1,2$ (Lem. III, 3.1 in \cite{golubitsky2012stable}).
    For every $\lambda$ for which the latter holds a local extension $\xi^{M}_{\lambda}$ of $\xi^{M}$ on $U_{\lambda}$ with the desired properties can be obtained by taking the pull-back in $(-1,1)$ with respect to the linear projection onto $\phi_{\lambda}(N\cap U_{\lambda})$ given by the standard base. The global field $\xi^{M}$ can then be recovered by combining the local extensions using a partition of unity in a standard way.
%
%
%
%
  \end{proof}

    \begin{lemma}
     	 \label{l: supported away} Let $N\subseteq M$ be a submanifold without boundary and suppose that we are given a compactly supported diffeotopy $G$ of $Id_{M}$ such that $(G_{t})_{\restriction N\cup U}=Id_{N\cup U}$ for all $t\in I$, where $U\subseteq M$ is an open set such that $\overline{N}\setminus U$ is compact and contained in $N$. Then for any $\epsilon>0$ there exists some neighbourhood $V$ of $\overline{N}$ in $M$ and some compactly supported diffeotopy $\tilde{G}$ of $Id_{M}$ such that: 
     	 \begin{itemize}
     	 	\item $\tilde{G}_{\restriction V\times I}=G_{\restriction V\times I}$,
     	 	\item $\ninf{\tilde{G}}\leq\epsilon$.
     	 \end{itemize}   
       Therefore, if we write 
     	$g :=G_{1}$ and $h:=\tilde{G}_{1}^{-1}$, then $h\in\w_{\epsilon}$ and $hg$ is isotopic to the identity through a diffeotopy $\hat{G}:=(\tilde{G}_{t}^{-1}G_{t})_{t\in I}$ satisfying $supp(\hat{G})\cap V=\emptyset$.
     \end{lemma}
     \begin{proof}
     	  As in the proof of \ref{l: isotopy extension}, we can extend $G$ to $M\times\R$ and consider the vector field $\chi$ on $M\times\R$ given by
     	  $$\chi(\Theta(p,t))=D\Theta\loc{(p,t)}(\partial_{t}),\quad \Theta:=(G,\pi_{\R}).$$
        Note that we may assume $\chi=\partial_{t}$ outside some compact set. We have 
        \begin{equation*}
        	\label{eq:stability} \tag{$\natural$}\Theta_{\restriction N\times\R}=Id_{N\times\R}\quad\Rightarrow\quad \chi_{\restriction (N\cup U)\times\R}=(\partial_{t})_{\restriction (N\cup U)\times\R}.
        \end{equation*}
         If we let $\chi^{M}$ be the $TM$-component of $\chi$, then (\ref{eq:stability}) implies the existence of some neighbourhood $V'$ of $N$ such that the norm of $\chi^{M}$ (with respect to the Riemannian metric on $M$) is bounded by $\frac{\epsilon}{2}$ on $V'$. Choose some smooth 
        $\rho:M\times\R\to[0,1]$ with the property that $\rho\circ \Theta_{\restriction V''\times\R}=1$
        on some neighbourhood $V''$ of $N$ and $\rho\circ \Theta_{\restriction (M\setminus V')\times\R}=0$ and consider the vector field 
        $\xi:= \rho\chi+(1-\rho)\partial_{t}$ on $M\times\R$.
        Clearly, $D\pi_{\R}(\xi)=\partial_{t}$, and from (\ref{eq:stability}) we can deduce
        \begin{equation*}
        	\label{eq:stability2} \tag{$\natural\natural$}
        	\xi_{\restriction \Theta((V''\cup U)\times\R)}=\chi_{\restriction \Theta((V''\cup U)\times\R)},\quad\xi_{\restriction U\times\R}=(\partial_{t})_{\restriction U\times\R}.
        \end{equation*}
        Since the norm of $\rho\chi^{M}$ is uniformly bounded by $\frac{\epsilon}{2}$, the diffeotopy $\tilde{G}$ of $Id_{M}$ obtained from integrating $\xi$ and restricting to $M\times I$ satisfies 
        $\ninf{\tilde{G}}\leq\epsilon$. It follows from (\ref{eq:stability2}) that $\tilde{G}_{\restriction V\times I}=G_{\restriction V\times I}$ for some neighbourhood $V$ of $\overline{N}$. 
     \end{proof}

  The following result can be extracted from the proof of Prop. 4.4.4 in \cite{wall2016differential},
  where a non-parametric version is formulated under the assumption that $N$ is compact.
  
  \begin{fact}
  	\label{f: close implies isotopic} Let $N,M$ be manifolds with $\partial M=\emptyset$. Assume that $H:N\times I^{k}\to M$, $k\geq0$, is a smooth map with the following properties: 
  	\begin{itemize}
  		\item the map $H_{\underline{s}}:N\to M$ sending $p$ to $H(p,\underline{s})$ is an embedding for all $\underline{s}\in I^{k}$,
  		\item there is a compact set $K\subseteq N$ such that $H(p,\underline{s})$ is independent of $\underline{s}\in I^{k}$ for all $p\in K$.  
  	\end{itemize}
    Then there is some neighbourhood $\mathcal{W}$ of $H$ in $\wt(N\times I^{k},M)$ such that for all 
    $H'\in\mathcal{W}$ such that $H'(p,\underline{s})=H(p,\underline{s})$ for all $p\in N\setminus K$ and $\underline{s}\in I^{k}$ there exists some $\tilde{H}: N\times I^{k+1}\to M$ with the property that for all $(p,\underline{s})\in N\times I^{k}$ we have
    \begin{itemize}
    	\item $\tilde{H}(p,\underline{s},-1)=H(p,\underline{s}),\quad \tilde{H}(p,\underline{s},1)=H'(p,\underline{s})$,
    	\item the curve $t\mapsto H(p,\underline{s},t)$ is a minimizing geodesic segment parametrized with constant speed. 
    \end{itemize} 
  \end{fact}
  
  \begin{corollary}
  	\label{c: main corollary} Let $N,M$, $H: N\times I^{k}\to M$ and $K\subseteq N$ be as in 
  	Fact \ref{f: close implies isotopic}. Then for any $\epsilon>0$ there is some neighbourhood $\W$ of $H$ in 
  	$\wt(N\times I^{k},M)$ such that for any $H'\in\mathcal{W}$
  	with $H'_{(N\setminus K)\times I^{k}}=H_{\restriction (N\setminus K)\times I^{k}}$ there is 
  	$G:M\times I^{k+1}\to M$ such that for all $\underline{s}\in I^{k}$ the map $G[\underline{s}]:M\times I\to M$ given by $G[\underline{s}](p,t):=G(p,\underline{s},t)$ is a 
  	diffeotopy of $Id_{M}$ such that
  	\begin{itemize}
  		\item $G[\underline{s}]$ is supported on  $\nn_{\epsilon}(K)$,
  		\item  $G[\underline{s}]_{1}(H(p,\underline{s}))=H'(p,\underline{s})$,
  		\item $\ninf{G[\underline{s}]}\leq\epsilon$.
  	\end{itemize}
  \end{corollary}
  \begin{proof}
  	For $\W$ small enough we can use \ref{f: close implies isotopic} to find $\hat{H}: (N\times I^{k})\times I\to M$ such that $\hat{H}_{-1}=H$, $\hat{H}_{1}=H'$ and 
  	$$\norm{D\hat{H}\loc{(p,\underline{s},t)}(\partial_{t})}=\frac{1}{2}dist(H(p,\underline{s}),H'(p,\underline{s}))\leq\frac{\epsilon}{2}$$ 
  	for all $(p,\underline{s},t)\in N\times I^{k+1}$.	After suitably extending $\hat{H}$ to $N\times\R^{k}\times I$, we may apply Lemma \ref{l: isotopy extension} to the map $(\hat{H},\pi_{\R^{k}}):N \times\R^{k}\times I\to M\times\R^{k}$ to obtain the desired conclusion.  
  \end{proof}

  \subsection*{Embedded balls}
        
        \newcommand{\cmp}[2]{\mathcal{C}_{#1}(#2)} 
        For $R>0$ and some integer $k>0$ we write: 
        $$
        B^{k}(R)=\{\underline{x}\in\R^{k}\,|\,\norm{x}<R\},\quad \quad \overline{B}^{k}=\{\underline{x}\in\R^{k}\,|\,\norm{x}\leq R\}.
        $$
        We abbreviate $B^{k}(1)$ by $B^{k}$ and similarly for $\overline{B}^{k}$.
        By an embedded $k$-ball (or, simply, a $k$-ball) $D$ in a manifold without boundary $M$ we intend the image of a smooth embedding of $\overline{B}^{k}$ in $M$. By the interior $\mathring{D}$ of $D$ we mean, by an abuse of notation, the image of $B^{k}$ by such an embedding, and by $\partial D$ the image of the standard sphere $S^{m-1}$. 
        
        By applying \cite{wall2016differential}, Thm. 2.5.6 iteratively we get the following.
        \begin{fact}
        \label{f: transitive on disks} Suppose we are given two families of disjoint $k$-balls  $\{D_{i}\}_{i=1}^{k},\{D'_{i}\}_{i=1}^{k}$ in some connected smooth manifold $M$ and a collection of diffeomorphisms $h_{i}:D_{i}\cong D'_{i}$, which we assume to be orientation-preserving if $M$ is orientable. Then there exists $h\in \D_{c0}(M)$ extending all $h_{i}$ simultaneously.
        \end{fact}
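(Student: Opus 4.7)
The plan is to argue by induction on the number $k$ of disks. For the base case $k = 1$, the key input is the disk theorem in the form of Thm 2.5.6 in \cite{wall2016differential}: any two orientation-compatible smooth embeddings of $\bar{B}^{n}$ into a connected manifold are joined by a compactly supported ambient diffeotopy of the identity. Given $h_{1}: D_{1} \cong D'_{1}$, pick a parametrization $e: \bar{B}^{n} \to M$ of $D_{1}$; applying this result to the pair $e$ and $h_{1} \circ e$ produces a compactly supported diffeotopy $G$ of $Id_{M}$ with $G_{1} \circ e = h_{1} \circ e$, whence $G_{1} \in \D_{c0}(M)$ satisfies $G_{1}|_{D_{1}} = h_{1}$.

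For the inductive step, assume the result holds for $k-1$ disks. Using the base case, produce $g_{1} \in \D_{c0}(M)$ with $g_{1}|_{D_{1}} = h_{1}$. Because $g_{1}$ is a global diffeomorphism, the balls $g_{1}(D_{2}), \dots, g_{1}(D_{k})$ are pairwise disjoint, and each is disjoint from $g_{1}(D_{1}) = D'_{1}$. Set $U := M \setminus D'_{1}$: provided $\dim M \geq 2$ this is a connected open submanifold of $M$, and it contains every $g_{1}(D_{i})$ and every $D'_{i}$ for $i \geq 2$. Applying the inductive hypothesis inside $U$ to the $k-1$ pairs of balls $g_{1}(D_{i}), D'_{i}$ with diffeomorphisms $h_{i} \circ (g_{1}|_{D_{i}})^{-1}$ yields $g_{2} \in \D_{c0}(U)$ realizing this restricted correspondence. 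Extending $g_{2}$ by the identity across $D'_{1}$, the resulting diffeomorphism lies in $\D_{c0}(M)$, since any compactly supported diffeotopy in $U$ extends by the identity to a compactly supported diffeotopy of $M$. Then $h := g_{2} \circ g_{1}$ does the job: $h|_{D_{1}} = g_{2}|_{D'_{1}} \circ h_{1} = h_{1}$ and $h|_{D_{i}} = (g_{2} \circ g_{1})|_{D_{i}} = h_{i}$ for $i \geq 2$.

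The main --- and essentially only --- obstacle is the connectedness of $M \setminus D'_{1}$, which may fail when $\dim M = 1$. In that case the argument still goes through by running the inductive step on each component of the complement separately, handling only the balls lying in that component; the orientation-preserving assumption on the $h_{i}$ already constrains the assignment of balls to components, so no compatibility issue arises.
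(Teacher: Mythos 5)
Your write-up is exactly the iteration the paper has in mind: the paper's entire proof is the phrase ``applying Wall, Thm 2.5.6 iteratively'', and your induction (realize $h_{1}$ by a compactly supported ambient diffeotopy, pass to the complement of $D'_{1}$, induct, extend by the identity) is the natural way to spell that out. For $\dim M\geq 2$ it is correct; the only points left implicit are that $M\setminus D'_{1}$ stays connected also when the balls have positive codimension (e.g.\ engulf $D'_{1}$ in an $m$-ball via a tubular neighbourhood) and that it stays non-orientable when $M$ is, so the inductive hypothesis imposes no new orientation constraint on the maps $h_{i}\circ(g_{1}|_{D_{i}})^{-1}$.

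The final paragraph about $\dim M=1$, however, is wrong, and no patch of the kind you describe can work, because in dimension $1$ the statement itself fails: take $M=\mathbb{R}$, $D_{1}=[0,1]$, $D_{2}=[2,3]$, $D'_{1}=[2,3]$, $D'_{2}=[0,1]$ and let $h_{1},h_{2}$ be the translations by $\pm 2$, which are orientation preserving; any $h\in\D_{c0}(\mathbb{R})$ is increasing, so it cannot satisfy $h(0)=2$ and $h(2)=0$. Concretely, in your induction the orientation-preserving hypothesis does not force $g_{1}(D_{2})$ and $D'_{2}$ into the same component of $\mathbb{R}\setminus D'_{1}$ (in this example they land in different ones), so ``running the inductive step on each component'' breaks down; on $S^{1}$ the analogous obstruction is the cyclic order of the arcs. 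This does not affect the paper, which invokes the fact only for the manifolds of dimension $\geq 2$ treated in the main argument, but your proof should either exclude $\dim M=1$ or add an order-compatibility hypothesis there rather than claim the case goes through.
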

        
        An analogous statement holds for the homeomorphism group of a topological manifold as a result of the annulus theorem \cite{moise2013geometric},\cite{Kirby1969},\cite{Quinn1982}. We say that a topological embedding of the standard closed $m$-ball $\overline{B}^{m}(1)$ in an $m$-dimensional topological manifold is collared if it extends to an embedding of $\overline{B}^{m}(2)$. We will refer to the image of such an embedding as a collared ball.  
       
        \begin{fact}
        \label{f: transitive on disks in homeo} Suppose we are given two families of disjoint collared balls  $\{D_{i}\}_{i=1}^{k},\{D'_{i}\}_{i=1}^{k}$ in some connected topological manifold $M$, as well as a collection of homeomorphisms $h_{i}:D_{i}\cong D'_{i}$, which we assume to be orientation-preserving in case $M$ is orientable. Then there exists $h\in \H_{c0}(M)$ extending all $h_{i}$ simultaneously.
        \end{fact}
             
      \subsection*{Embedded simplicial complexes} 
      By an embedded simplicial complex $\Delta$ in $M$ we mean an embedding of the geometric realization of a simplicial complex into $M$ which is smooth on every simplex. For the most part, we will think of this merely as a family $\{\Delta^{k}\}_{k\geq 0}$ of subsets of $M$, where $\Delta^{k}$ stands for the collection of all $k$-simplices of $\Delta$. We also write $\Delta^{(k)}$, or the $k$-skeleton of $\Delta$, for the union $\bigcup_{0\leq l\leq k}\Delta^{l}$, and $\rl{\Delta}$ for the collection of points in the image of $\Delta$.
      By Whitney's extension theorem \cite{whitney1992analytic} and the inverse function theorem, 
      each simplex $\sigma\in\Delta^{k}$ is the image by some smooth embedding $\phi:\overline{B}^{k}\to M$ 
      of some affine simplex $\tau\subseteq B^{k}$.
      
      Given a submanifold $N\supseteq M$ and an embedded simplicial complex $\Delta$ in $M$, we write $N\transv\Delta$ if each of the simplices of $\Delta$ can be extended to an embedded manifold of the same dimension in general position with respect to $N$. 
 
     \begin{definition}
     	  \label{d: stabilizers}If $\Delta$ is a $(m-1)$-dimensional embedded finite simplicial complex in an $m$-dimensional manifold $M$ we denote by $\PS{\hh}{\Delta}$ the subgroup of $\D_{c0}(M)$ consisting of all the elements $g$ of the form $g=G_{1}$ for some compactly supported diffeotopy $G$ of $Id_{M}$ for which there is some neighbourhood $V$ of $\rl{\Delta^{(m-2)}}$ such that for all $t\in I$ we have $(G_{t})_{\restriction V\cup\rl{\Delta}}=Id_{\restriction V\cup\rl{\Delta}}$.
            
       We also denote by $\SS{\hh}{\Delta}$ the subgroup consisting of all elements of the form $G_{1}$ for 
       some compactly supported diffeotopy $G: M\times I\to M$ of $Id_{M}$ such that for some neighbourhood $V$ of $\rl{\Delta^{(m-2)}}$ and every $t\in I$: 
        \begin{itemize}
        \item $G_{t}$ preserves each simplex of $\Delta$ setwise,
        \item $(G_{t})_{\restriction V}=Id_{V}$.
        \end{itemize}
     \end{definition}
        
     \subsection*{Triangulations and spines}
      We will refer to an embedded simplicial complex as a triangulation of $M$ when the embedding is a homeomorphism. An account of the fundamentals of the theory of triangulations of smooth manifolds can be found in \cite{munkres2016elementary} (see also \cite{cairns1961simple} for a short proof of existence). 
      \begin{fact}
      	\label{f: existence of triangulations}Every smooth manifold admits a triangulation, which can be taken to be finite in case the manifold is compact. Moreover, for any given $\epsilon>0$ its simplices can be assumed to have diameter uniformly bounded by $\epsilon$. 
      \end{fact}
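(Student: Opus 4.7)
The plan is to follow the classical Cairns--Whitehead approach, reducing the global problem to a local model on $\R^{n}$ and then patching inductively. First I would fix a locally finite cover $\{U_{i}\}_{i\in I}$ of $M$ by relatively compact open sets, each diffeomorphic to $\R^{n}$ via charts $\phi_{i}:U_{i}\to\R^{n}$, and a shrinking $\{V_{i}\}_{i\in I}$, $\bar V_{i}\subseteq U_{i}$, still covering $M$. The local model is the standard triangulation of $\R^{n}$ obtained from the integer cubulation via canonical simplicial decomposition of the cube; by scaling we may assume each of its simplices has Euclidean diameter smaller than any prescribed quantity.

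The core of the proof would be the inductive step: assuming we have built a smooth triangulation $\Delta_{k}$ on an open neighbourhood $W_{k}$ of $\bar V_{1}\cup\dots\cup\bar V_{k}$, extend it to a triangulation $\Delta_{k+1}$ on a neighbourhood $W_{k+1}$ of $\bar V_{1}\cup\dots\cup\bar V_{k+1}$. To do this I would transport the standard triangulation of $\R^{n}$ via $\phi_{k+1}$ into $U_{k+1}$ to get an auxiliary triangulation $\Sigma$, take a sufficiently fine subdivision of both $\Delta_{k}$ and $\Sigma$, and then match them on the overlap region by a ``secant'' or piecewise-linear approximation to the smooth transition diffeomorphism $\phi_{k+1}\circ\phi_{i}^{-1}$. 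The matching is carried out skeleton by skeleton, with smoothing in a collar of the boundary of the previously triangulated region; the transition being smooth, a fine enough subdivision makes the approximation arbitrarily $C^{0}$-close and transverse, which guarantees that the resulting map remains an embedding and that simplices of the two triangulations can be redefined so as to coincide on a smaller neighbourhood of the overlap. This is the step I expect to be the main obstacle; it is essentially the content of the Cairns--Whitehead smoothing theorem and the reason the result requires a nontrivial reference.

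For the refinements in the statement, compactness of $M$ allows us to take a \emph{finite} cover $\{U_{i}\}_{i=1}^{N}$, so the induction terminates after $N$ steps and yields a finite triangulation. To achieve uniformly small diameter with respect to the fixed Riemannian metric, I would exploit the fact that on each compact piece the Riemannian distance is comparable to any Euclidean distance coming from a local chart. Iterated barycentric subdivision of the resulting triangulation then divides the maximum Euclidean diameter by a factor $\tfrac{n}{n+1}$ at each step, so finitely many subdivisions in the compact case (and a locally controlled, chart-by-chart application in the noncompact case) reduce the Riemannian diameter of every simplex uniformly below the prescribed $\epsilon>0$.
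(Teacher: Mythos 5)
The paper does not prove this statement at all: it is quoted as a known Fact, with the existence and fineness of smooth triangulations delegated to the literature (Cairns' simple proof and Munkres' \emph{Elementary Differential Topology}). Your outline is precisely the classical Cairns--Whitehead scheme that those references implement, so in spirit you are reconstructing the cited proof rather than replacing it. As a blind proof, however, it has a real gap exactly where you say it does: the inductive matching step, in which the chart triangulation $\Sigma$ is adjusted by a secant (piecewise-linear) approximation of the transition map so as to fit the already-built complex $\Delta_{k}$, is the entire content of the theorem, and ``$C^{0}$-close and transverse, hence still an embedding'' is not enough to carry it out. The delicate point in Whitehead's argument is quantitative: the secant approximation of a diffeomorphism on a simplex is an embedding only if the simplex is sufficiently \emph{full} (thick) relative to its diameter, and one must keep a uniform lower bound on fullness while subdividing and while putting the two complexes in general position with respect to each other. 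Iterated barycentric subdivision destroys fullness, which is why the classical proofs use standard (or otherwise shape-controlled) subdivisions in this step; your sketch does not address this, and without it the claim that a ``fine enough subdivision'' makes the matching work is unsupported.

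Two smaller points. First, in the noncompact case your plan to shrink diameters afterwards ``chart by chart'' by barycentric subdivision is awkward: subdividing only some simplices of a complex does not automatically yield a simplicial complex, so one must either use subdivisions relative to a subcomplex (stellar-type) or, more simply, build the fineness in from the start by scaling the local models $\phi_{i}$, which is unproblematic since each $U_{i}$ is relatively compact and the cover is locally finite, so Riemannian and chart-Euclidean diameters are uniformly comparable on each chart. Second, the diameter estimate for barycentric subdivision (factor $\tfrac{n}{n+1}$) and the finite-cover argument in the compact case are fine as stated. In summary: the strategy is the right one and matches the sources the paper cites, but the proposal defers the genuinely hard step (fullness-controlled secant approximation and compatible subdivision) to exactly the reference it was meant to replace.
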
   
     
     \newcommand{\tr}[0]{\Upsilon}
     
     Given a triangulation $\tr$ of an $m$-dimensional manifold with boundary, we let $\mathcal{G}(\tr)$ be the graph which has $\tr^{m}$ as a set of vertices and an edge between $\sigma$ and $\sigma'$ whenever $\sigma$ and $\sigma'$ share an $(m-1)$-dimensional face.  
     
     \begin{definition}
	     \label{d: retracting forest} By a retracting forest for $\tr$ we mean a finite collection $\mathcal{F}=\{(\Gamma_{i},\tau_{i})\}_{i=1}^{r}$,
	     where $\{\Gamma_{i}\}_{i=1}^{r}$ is a collection of disjoint subtrees of $\mathcal{G}(\tr)$ and $\tau_{i}$ some $(m-1)$-dimensional simplex which lies on $\partial M$ and is a face of one of the simplices in $V(\Gamma_{i})$. 
	     We recycle notation by letting 
	     $$\rl{\Gamma_{i}}:=\bigcup_{\sigma\in V(\Gamma_{i})}\sigma, \quad\quad\rl{\mathcal{F}}:=\displaystyle\bigcup_{i=1}^{r}\rl{\Gamma_{i}}.$$
	     
	     By the $\mathcal{F}$-spine of $\mathcal{F}$ or $\Sp(\mathcal{F})$ we mean the $(m-1)$-dimensional simplicial subcomplex of $\tr$ consisting the $(m-2)$-skeleton of $|\mathcal{F}|$, together with all those $(m-1)$-simplices that are 
	     \begin{itemize}
	     	\item contained in some simplex in $\mathcal{F}$,
	     	\item not among $\{\tau_{i}\}_{i=1}^{r}$ nor dual to an edge in one of the $\Gamma_{i}$. 
	     \end{itemize}
	     We define the thick spine of $\mathcal{F}$ or $\Tsp(\mathcal{F})$ as the union of $\Sp(\mathcal{F})$ and the subcomplex spanned by all the $m$-dimensional simplices of $\Upsilon$ not in $\mathcal{F}$.
     \end{definition}
     
     From the proof of Theorem 2.1 in \cite{whitehead1961immersion} one can extract the following.
     \begin{fact}
     	\label{f: spine} Assume $N$ is a manifold without boundary, $\tr$ some triangulation of $N$,
     	$\mathcal{F}=\{(\Gamma_{i},\tau_{i})\}_{i=1}^{r}$ a retracting forest for $\tr$ and $W$ a neighbourhood of 
     	$\rl{\Tsp(\mathcal{F})}$. 
     	Then there exists some open set $V$, $\rl{\Tsp(\mathcal{F})}\subseteq V\subseteq W$ and some isotopy $H$ of $N$ in itself supported on $N\setminus V$ such that $H_{1}(N)\subseteq W$. 
     \end{fact}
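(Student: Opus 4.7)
The plan is to construct $H$ as a concatenation of elementary pushes, one per vertex of the forest $\mathcal{F}$, each of which collapses a single top-dimensional simplex of $\mathcal{F}$ through one of its $(m-1)$-faces. Since the trees $\{\Gamma_{i}\}_{i=1}^{r}$ are pairwise disjoint, with disjoint closed unions of simplices, I would handle them independently. Within a single tree $\Gamma_{i}$ I would order its vertices from leaves inward, with the root simplex (the one having $\tau_{i}$ as a face) treated last, so that by the time a leaf $\sigma$ is processed its only remaining tree-neighbour is its parent $\sigma'$ across a shared $(m-1)$-face $\eta$.

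For such a leaf-parent pair $(\sigma,\sigma')$ I would build an ambient isotopy $H^{(\sigma)}$ of $N$ whose support is contained in a prescribed neighbourhood of $\sigma\cup\sigma'$ inside $W$, which fixes every face of $\sigma$ other than $\mathring\eta$ (in particular the entire $(m-2)$-skeleton of $\mathcal{F}$ and the $(m-1)$-faces of $\mathcal{F}$ that survive in $\Sp(\mathcal{F})$), and which at time $1$ pushes $\mathring\sigma$ into $\mathring{\sigma'}$ across $\mathring\eta$. Concretely, the affine charts on $\sigma$ and $\sigma'$ provided by the triangulation allow one to construct such an isotopy from a radial contraction of $\sigma$ toward $\mathring\eta$ combined with a small translation on $\sigma'$, glued across $\eta$ via a smooth interpolation and damped to the identity outside a tubular neighbourhood of $\sigma \cup \sigma'$ using a bump function. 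For the root simplex of $\Gamma_{i}$ the role of $\eta$ is played by $\tau_{i}\subseteq\partial N$; since each $H_{t}$ must take values in $N$, rather than push across $\tau_{i}$ I would push into an arbitrarily thin collar of $\tau_{i}$ inside the root simplex, whose bounding faces all lie in $\rl{\Tsp(\mathcal{F})}\subseteq W$.

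Concatenating these elementary isotopies on nested subintervals of $I$ via Fact \ref{f: concatenation} and reparametrizing yields a single smooth isotopy $H$ of $N$ whose support lies in the union of the prescribed neighbourhoods, which can all be chosen inside $W$ and disjoint from a common open neighbourhood $V$ of $\rl{\Tsp(\mathcal{F})}$. By construction $H_{1}(N)$ is covered by $\rl{\Tsp(\mathcal{F})}$ together with the images of the final pushes into the collars of the $\tau_{i}$, giving $H_{1}(N)\subseteq W$ as required. The chief technical obstacle is ensuring global smoothness of the pushing vector field across each $(m-1)$-face $\eta$, where two simplices meet at an angle; this is exactly the content of Whitehead's smoothing argument in \cite{whitehead1961immersion}, Theorem 2.1, and is the reason we invoke that result rather than attempt a direct construction. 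Once smoothness is arranged, the support condition and the inclusion $H_{1}(N)\subseteq W$ follow directly from the locality of each elementary push.
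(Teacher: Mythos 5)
Your overall strategy (realize the collapses of the forest by a concatenation of elementary pushes, one per tree vertex, and smooth via Whitehead) is the right genre of argument --- indeed the paper does not prove this Fact at all but extracts it from the proof of Theorem 2.1 in \cite{whitehead1961immersion}, where the isotopy is built exactly from such elementary collapses. However, your construction has a genuine error, and it traces back to a misreading of Definition \ref{d: retracting forest}: the free faces $\tau_{i}$ are \emph{excluded} from $\Sp(\mathcal{F})$, and since $\tau_{i}$ lies on $\partial N$ it is a face of only one $m$-simplex, namely the root of $\Gamma_{i}$, which belongs to $\mathcal{F}$; hence $\mathring{\tau}_{i}$ is not contained in $\rl{\Tsp(\mathcal{F})}$ either. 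Consequently a thin collar of $\tau_{i}$ inside the root simplex is \emph{not} contained in an arbitrary neighbourhood $W$ of $\rl{\Tsp(\mathcal{F})}$ (choose $W$ missing the barycentre of $\tau_{i}$), and its bounding faces do not all lie in $\rl{\Tsp(\mathcal{F})}$, contrary to what you assert. Since your scheme funnels the material of the whole tree into precisely such a collar, it terminates with $H_{1}(N)\nsubseteq W$. The flow of material is backwards: the $\tau_{i}$ are the faces at which the collapse \emph{starts}; the isotopy one extracts from Whitehead pushes $N$ inward from each $\tau_{i}$ along the tree (root towards the leaves), compressing each forest simplex into a small neighbourhood of the part of its boundary lying in the spine, so that the manifold comes to rest near $\rl{\Tsp(\mathcal{F})}$ and away from the $\tau_{i}$ --- which is exactly what the application in Lemma \ref{l: out of neighbour} needs, since the region beyond the $\tau_{i}$ is the ball $g\cdot D$ being vacated into.

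There is also a local problem with each elementary push as you describe it: if $H^{(\sigma)}_{1}$ fixes every face of $\sigma$ other than $\mathring{\eta}$ pointwise (and it must in fact fix a whole neighbourhood $V$ of $\rl{\Tsp(\mathcal{F})}$, since $supp(H)\subseteq N\setminus V$ is required), it cannot map all of $\mathring{\sigma}$ into $\mathring{\sigma'}$: for $p_{n}\in\mathring{\sigma}$ converging to a point $q$ of a fixed face with $q\notin\sigma'$ (such $q$ exist, as $\sigma\cap\sigma'=\eta$), the images $H^{(\sigma)}_{1}(p_{n})\in\mathring{\sigma'}$ would converge to $q\in\sigma'$, a contradiction. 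At best the push can move $\sigma$ minus a thin buffer around $\partial\sigma\cap\rl{\Sp(\mathcal{F})}$; leaving that residue behind is harmless (it already lies in $W$), so this defect, like the statement that the supports lie ``inside $W$'' (the points that must move are exactly those far from the thick spine), is reparable. The terminal step is not: even after these repairs you would have to compress the accumulated material towards the spine faces of the root simplex, i.e.\ towards $\partial\sigma_{\mathrm{root}}$ with $\mathring{\tau}_{i}$ and the dual faces removed, rather than towards $\tau_{i}$. Finally, note that disjointness of the $\Gamma_{i}$ as subforests of $\mathcal{G}(\tr)$ does not make the sets $\rl{\Gamma_{i}}$ disjoint (they may share lower-dimensional faces); this only affects the bookkeeping of supports, which must in any case stay off a neighbourhood of those shared faces since they lie in the spine.
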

     
    \begin{figure}[t]
   	\includegraphics[width=0.7\textwidth]{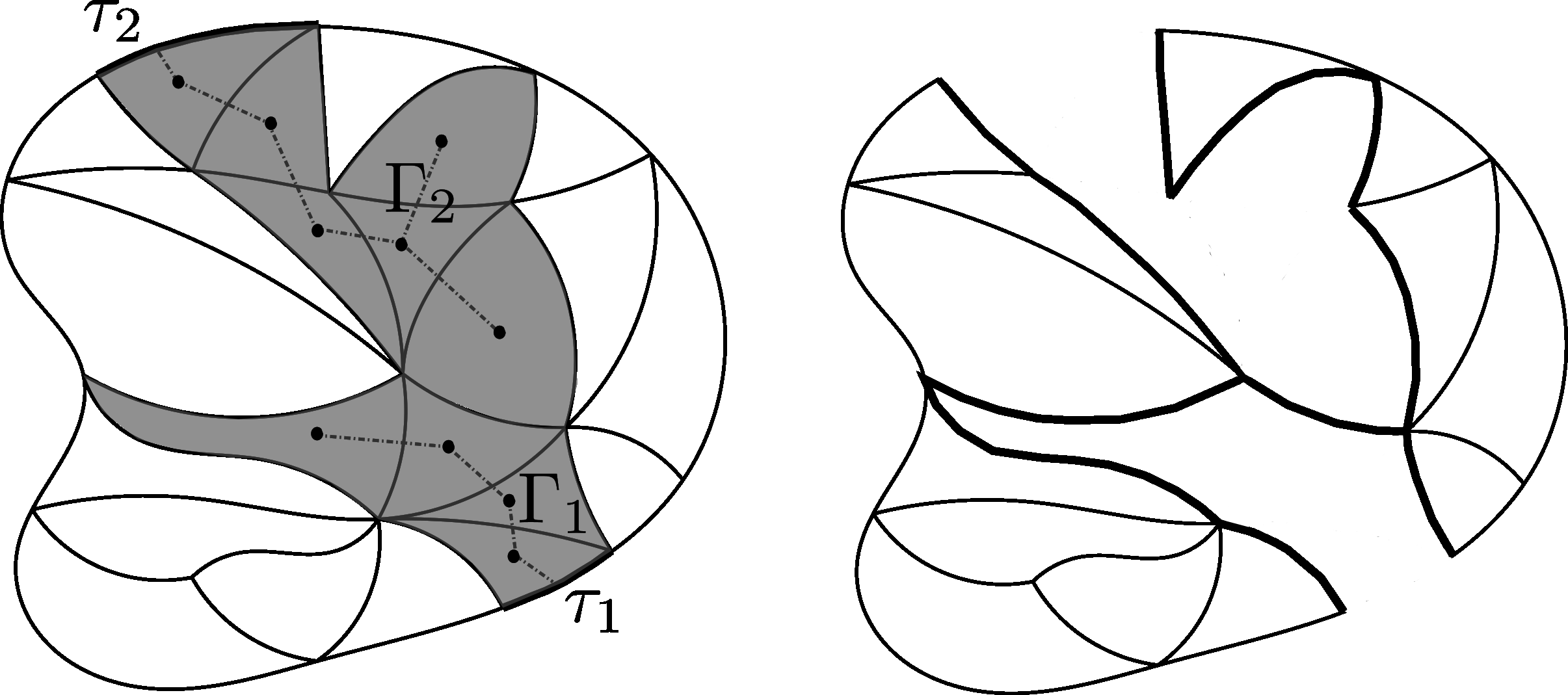}
   	\caption{	\label{fig:spine}On the left a retracting forest $\mathcal{F}=\{(\Gamma_{1},\tau_{1}),(\Gamma_{2},\tau_{2})\}$ for a triangulation of $\Upsilon$ of a $2$-dimensional disk. On the right the $2$-dimensional complex $\Tsp(\mathcal{F})$ which is the image of the retraction, with the $1$-dimensional subcomplex 
   	 $\Sp(\mathcal{F})$ highlighted using a thicker trace.} 
   \end{figure} 
     
     \subsection*{Tubular neighbourhoods} 
     
     By an (open) tubular neighbourhood of a submanifold $N\subseteq M$, $dim(M)=m$, $dim(N)=n$ we mean a bundle
     $\pi:E\to N$ with fiber $\overline{B}^{n-m}$ ($B^{n-m}$), together with an embedding $\xi: E\to M$ whose restriction to the zero section $E_{0}$ is a diffeomorphism with $N$. We will denote this simply by $(E,\xi)$. We will write $E_{p}=\pi^{-1}(p)$ for the fiber over $p$ and refer to $\xi(E_{p})$ as a fiber image. We say that it is trivial if (up to diffeomorphism) $E=N\times B^{d}$, for some $d>0$ and $\pi$ equals the projection on the first factor. 
     
     \begin{fact}
     	Any submanifold $N\subseteq M$ admits a tubular neighbourhood, which is trivial in the connected components of $N$ are simply connected.
     \end{fact}
      
                    
     The following refinement of a particular case of Lem. 2.5.2 and Cor. 2.5.3 in \cite{wall2016differential} is probably well known, but we sketch the proof for the sake of completeness.
     \begin{lemma}
     	\label{l: tubular neighbourhoods} Let $N\subseteq M$ be codimension $1$ manifold without boundary and $(E,\xi)$,$(E,\xi')$ two open tubular neighbourhoods of $N$ in $M$ such that $\xi$ and $\xi'$ agree on $E\setminus\pi^{-1}(K)$ for some compact $K\subseteq N$.
     	Then there exists some isotopy between $\xi$ and $\xi'$ constant on $\pi^{-1}(N\setminus K)\cup E_{0}$. 
     \end{lemma}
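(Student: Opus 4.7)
The plan is to establish a refined version of Wall's uniqueness theorem for tubular neighborhoods (2.5.2), adapting its linear-interpolation-in-exponential-coordinates strategy to produce an isotopy that is trivial on the prescribed closed set.

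First, by the standard convention identifying $E_{0}$ with $N$, any tubular neighborhood $\xi:E\to M$ of $N$ restricts to the inclusion $N\hookrightarrow M$ on the zero section, so $\xi|_{E_{0}}=\xi'|_{E_{0}}$ and the isotopy can be arranged to be trivial on $E_{0}$. Next, the disagreement locus $D:=\{p\in N:\xi|_{E_{p}}\neq\xi'|_{E_{p}}\}$ is an open subset of $K$; by replacing $K$ with a compact neighborhood of $\overline{D}$ inside the original $K$ (which only strengthens the conclusion), we may assume $\overline{D}\subseteq\mathring{K}$ and in particular that $\xi=\xi'$ holds on an open neighborhood of $\pi^{-1}(\partial K)$. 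Fix a Riemannian metric on $M$ and let $\exp_{N}:\nu_{N}\supseteq U\to M$ denote the normal exponential map.

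The main construction combines a localized shrinking with a linear interpolation in exponential coordinates. Choose compact sets $K_{0}\subseteq\mathring{K}_{1}\subseteq K_{1}\subseteq\mathring{K}$ with $\overline{D}\subseteq\mathring{K}_{0}$ and a smooth cutoff $\rho:N\to[0,1]$ with $\rho\equiv 1$ on $K_{0}$ and $\rho\equiv 0$ outside $K_{1}$. For $s\in(0,1]$, the partial fiberwise rescaling $\mu_{s}(v):=(1-(1-s)\rho(\pi(v)))v$ is a diffeomorphism of $E$ equal to the identity on $\pi^{-1}(N\setminus K_{1})$. The smooth families $\xi_{s}:=\xi\circ\mu_{s}$ and $\xi'_{s}:=\xi'\circ\mu_{s}$ define isotopies from $\xi,\xi'$ to thin embeddings $\xi_{\epsilon},\xi'_{\epsilon}$ which are constant on $\pi^{-1}(N\setminus K_{1})\supseteq\pi^{-1}(N\setminus K)$; moreover $\xi_{s}=\xi'_{s}$ on $\pi^{-1}(N\setminus K_{0})$ throughout, by the choice of $K_{0}$. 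For $\epsilon>0$ sufficiently small the images $\xi_{\epsilon}(\pi^{-1}(K_{0}))$ and $\xi'_{\epsilon}(\pi^{-1}(K_{0}))$ lie in $\exp_{N}(U)$, so we can write $\xi_{\epsilon}=\exp_{N}\circ\alpha$ and $\xi'_{\epsilon}=\exp_{N}\circ\alpha'$ on $\pi^{-1}(K_{0})$, where $\alpha,\alpha'$ coincide on a neighborhood of $\pi^{-1}(\partial K_{0})$. Setting $\xi_{\epsilon,t}:=\exp_{N}\circ((1-t)\alpha+t\alpha')$ on $\pi^{-1}(K_{0})$ and $\xi_{\epsilon,t}:=\xi_{\epsilon}=\xi'_{\epsilon}$ on $\pi^{-1}(N\setminus K_{0})$ yields a smooth family with $\xi_{\epsilon,0}=\xi_{\epsilon}$, $\xi_{\epsilon,1}=\xi'_{\epsilon}$ that is constant on $\pi^{-1}(N\setminus K_{0})\cup E_{0}$. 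The desired isotopy is the smooth concatenation of the three stages: shrink $\xi\to\xi_{\epsilon}$, interpolate $\xi_{\epsilon}\to\xi'_{\epsilon}$, unshrink $\xi'_{\epsilon}\to\xi'$; all three are constant on $\pi^{-1}(N\setminus K_{1})\cup E_{0}\supseteq\pi^{-1}(N\setminus K)\cup E_{0}$.

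The main obstacle is verifying that every $\xi_{\epsilon,t}$ is in fact an embedding of $E$ into $M$. Fiberwise injectivity on $\pi^{-1}(K_{0})$ reduces, in codimension $1$, to checking that $D\xi|_{E_{0}}$ and $D\xi'|_{E_{0}}$ induce the same orientation on each fiber of $E$; this is automatic on fibers over $N\setminus K_{0}$ (where $\xi=\xi'$) and propagates by connectedness to every component of $N$ meeting $N\setminus K_{0}$. Combined with $C^{1}$-closeness of $\alpha,\alpha'$ to their linearizations (enforced by taking $\epsilon$ small), this gives fiberwise injectivity and transversality to $N$; global injectivity at the gluing $\pi^{-1}(\partial K_{0})$ then follows from the $C^{\infty}$ compatibility obtained from $\alpha=\alpha'$ on a full neighborhood of $\pi^{-1}(\partial K_{0})$ and from the fact that $\xi$ already embeds $\pi^{-1}(K_{0})$ and $\pi^{-1}(N\setminus K_{0})$ into disjoint regions outside their shared boundary.
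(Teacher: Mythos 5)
Your argument has a genuine gap at its central step: the interpolation $\exp_N\circ((1-t)\alpha+t\alpha')$ is not defined. The points $\alpha(v)=\exp_N^{-1}(\xi_{\epsilon}(v))$ and $\alpha'(v)=\exp_N^{-1}(\xi'_{\epsilon}(v))$ lie in the total space of the normal bundle $\nu_N$ but, in general, over two \emph{different} foot points of $N$ (neither $\exp_N^{-1}\circ\xi_{\epsilon}$ nor $\exp_N^{-1}\circ\xi'_{\epsilon}$ is fibre-preserving), and a vector bundle has no linear structure across fibres, so the convex combination has no meaning. This is not cosmetic: the whole difficulty in uniqueness of tubular neighbourhoods is precisely that $\xi$ and $\xi'$ do not differ by a fibrewise map, and any repair of the interpolation (projecting to foot points, interpolating in charts, etc.) lands you exactly at the point where injectivity of every interpolated map must be proved. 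Your treatment of that crux is an assertion rather than an argument: $\alpha$ and $\alpha'$ are close to their fibre derivatives only after rescaling the fibre variable by $1/\epsilon$, and two embeddings each close to two \emph{different} linear models need not have injective convex combinations; one must compare the fibre derivatives of $\xi$ and $\xi'$ along the zero section and exploit that comparison uniformly. That is exactly what the paper's proof (following Wall 2.5.2) does: it forms $\iota:=\xi^{-1}\circ\xi'\circ\lambda$, with $\lambda$ a cut-off fibre rescaling making the composition defined and equal to the identity over $N\setminus K$, and uses the canonical isotopy $H_t(p)=\tfrac{2}{t+1}\,\iota\bigl(\tfrac{t+1}{2}\,p\bigr)$, which deforms $\iota$ through embeddings to its fibre derivative (in codimension $1$ a positive scalar) and then to the identity, with no gluing and no smallness estimates.

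Two further problems with your set-up. First, the reduction ``replace $K$ by a compact neighbourhood of $\overline D$ inside $K$, so that $\overline D\subseteq\mathring K$'' is not available: $D$ is open and contained in $K$, but $\overline D$ can meet the frontier of $K$ in $N$ (e.g.\ $D=\mathring K$), in which case no neighbourhood of $\overline D$ lies inside $K$; enlarging $K$ instead would weaken the conclusion, and your construction genuinely needs a collar of agreement around the region where you cut off and glue, whereas the formula-based proof needs no such collar. Second, your orientation argument ``propagates by connectedness to every component of $N$ meeting $N\setminus K_0$'' leaves uncovered the components of $N$ contained in $K_0$; there the sign of the fibre-derivative comparison is a genuine hypothesis-level issue (the paper's own proof uses it silently when asserting that $H_{-1}$ is multiplication by a \emph{positive} scalar, and it is what holds in the intended application, Corollary \ref{c: supported away}), so it cannot be dismissed as automatic.
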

     \begin{proof}   	
     A standard partition of unity argument yields a fiber-preserving map $\lambda:E\to E$ which satisfies $\xi'\circ\lambda\subseteq im(\xi)$, restricts to multiplication by a scalar on each fiber and is the identity $\pi^{-1}(N\setminus K)$. The map $\iota:=\xi^{-1}\circ\xi'\circ\lambda$ is also the identity on $\pi^{-1}(N\setminus K)$. As in the proof of 2.5.2 in \cite{wall2016differential}, 
     	if we let $H_{t}(p)=\frac{2}{t+1}\cdot\iota(\frac{t+1}{2}\cdot p)$ for $p\in E$ and $t\in I$, then this extends to an isotopy
     	$(H_{t})_{t\in I}$ of $E$ in itself so that
     	$H_{-1}:E\to E$ is multiplication by some positive scalar.
     	There are obvious fiber-preserving isotopies $H'$ from      	
     	$Id_{E}$ to $H_{0}$ and $H''$ from $\lambda$ to $Id_{e}$ constant on $\pi^{-1}(N\setminus K)$, and the isotopy $(\xi\circ (H'*H))*(\xi'\circ H'')$ satisfies the required conditions.  	
     \end{proof}

    The following Lemma allows us to obtain covering diffeotopies with no control on the derivatives of the given isotopy, provided some other conditions are satisfied. 
   \begin{observation}
   	\label{o: lifting small diffeotopies} Let $N\subseteq M$ a closed submanifold of codimension $1$ which admits a trivial tubular neighbourhood $\xi: N\times I\to M$.   	
   	Then there exists some function $\omega:\R_{>0}\to\R_{>0}$ which satisfies $\lim_{\epsilon\to 0}\omega(\epsilon)=0$ 
   	and has the following property. 
   	Given a diffeotopy $F:N \times I\to N$ of $Id_{N}$ such that $\ninf{F}<\epsilon$, there exists some  diffeotopy $G$ of $Id_{M}$ such  
   	that $G_{\restriction N\times I}=F$ and $\ninf{G}<\omega(\epsilon)$. 
   \end{observation}
   \begin{proof}
    Take any smooth function $\rho:I\to I$ such that 
    \begin{itemize}
    	\item $\rho$ equals $0$ in some neighbourhood of $\partial I=\{-1,1\}$,
    	\item $\rho(0)=1$, 
    \end{itemize}
    and define $G$ as follows: 
    \begin{equation*}
    	G(q,t):=
    	\begin{cases*}
    	 \xi(F(p,(t+1)\rho(s)-1),s) & if $q=\xi(p,s)$ for some $(p,s)\in N\times I$ \\
    	 q                   & if $q\notin im(\xi)$
    	\end{cases*}
    \end{equation*}
    It is easy to check that this is a well-defined compactly-supported diffeotopy of $Id_{M}$ extending 
    $F$. The existence of $\omega$ as in the statement follows easily by continuity. 
   \end{proof}

     \subsection*{Jet spaces and Thom's transversality} We establish some notation and recall some basic facts. More details can be found in Chapter 4 of \cite{wall2016differential}. Given manifolds $N$ and $M$ of dimension $n$ and $m$, respectively, the space of $1$-jets from $N$ to $M$, or $\jet(N,M)$, is the set of triples $(p,q,\lambda)$, where $p\in N$, $q\in M$ and $\lambda$ belongs to the set of linear maps $\mathcal{L}(T_{p}N,T_{q}M)$. We denote by $\pi_{\jet}:\jet(N,M)\to N\times M$ the map sending $(p,q,\lambda)$ to $(p,q)$.
     
     \begin{fact}
	     \label{f: jet coordinates} One can endow $\jet(N,M)$ with the structure of a smooth manifold and in fact a vector bundle over $N\times M$ (with the obvious projection). One does this by constructing for each pair of local charts $\phi=\pmb{x}$ on $U\subseteq N$ and $\psi=\pmb{y}$ on $V\subseteq M$ 
	     a chart $\Psi(\phi,\psi)$ on $\pi^{-1}_{\jet}(U\times V)$ with coordinates: 
	     $$\{x_{i},y_{j},\zeta_{i,j}\,|\,1\leq i\leq m, 1\leq j\leq n\},$$ where $\zeta_{i,j}(p,q,\lambda)$
	     is the $\partial_{y_{j}}$-component of the vector $\lambda(\partial_{x_{i}})$ with respect to the base
	     $\{\partial_{y_{i}}\}_{i=1}^{m}$.
     \end{fact}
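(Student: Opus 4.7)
The plan is essentially to verify that the collection of maps $\Psi(\phi,\psi)$ described forms a smooth atlas on $\jet(M,N)$ whose transition functions are linear in the fiber coordinates $\zeta_{i,j}$, so that the projection onto $M\times N$ is automatically a smooth vector bundle.

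First I would check that each $\Psi(\phi,\psi)$ is a bijection from $\pi^{-1}(U\times V)\subseteq\jet(M,N)$ onto $\phi(U)\times\psi(V)\times\R^{mn}\subseteq\R^{m+n+mn}$. Given $(p,q)\in U\times V$, prescribing a matrix $(\zeta_{i,j})_{i,j}$ determines a unique linear map $\lambda:T_pM\to T_qN$ via $\lambda(\partial_{x_i})=\sum_j\zeta_{i,j}\partial_{y_j}$, and every $\lambda\in\mathcal{L}(T_pM,T_qN)$ is recovered this way. I would then endow $\jet(M,N)$ with the unique topology making all such $\Psi(\phi,\psi)$ homeomorphisms onto their (open) images; Hausdorffness and second countability follow at once from the same properties for $M$ and $N$.

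Next I would compute the transition maps and check smoothness. If $(\phi'=\pmb{x}',\psi'=\pmb{y}')$ is a second pair of charts on overlapping domains, then on the overlap $x'_i=x'_i(\pmb{x})$ and $y'_j=y'_j(\pmb{y})$ are smooth. Writing $\lambda(\partial_{x'_i})$ in the $(\pmb{x},\pmb{y})$-frame by the chain rule yields
$$
\zeta'_{i,j}(p,q,\lambda)=\sum_{k,l}\frac{\partial x_k}{\partial x'_i}(p)\,\frac{\partial y'_j}{\partial y_l}(q)\,\zeta_{k,l}(p,q,\lambda),
$$
while the base coordinates $(\pmb{x}',\pmb{y}')$ transform by the ordinary chart changes of $M$ and $N$. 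The resulting transition formula is smooth (it is a product of two smooth Jacobian factors applied to a linear function of $\zeta$), so the atlas is smooth and the projection to $M\times N$ is a smooth submersion on every chart.

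Finally, the vector bundle structure is essentially free: in each chart the fiber over $(p,q)$ is identified with $\R^{mn}$ via $\zeta_{i,j}$, and the transition formula above is \emph{linear} in $\zeta$ with smooth base-dependent coefficients, which are precisely the components of the tensor product of the Jacobians of the underlying chart changes on $M$ and $N$. This is the definition of a smooth vector bundle cocycle, so $\pi:\jet(M,N)\to M\times N$ acquires the structure of a rank $mn$ smooth vector bundle. There is no real obstacle here beyond bookkeeping; the only thing one has to be slightly careful with is making sure the Jacobians in the transition formula are evaluated at the correct base points $(p,q)$ in the overlap, but this follows from the pointwise nature of the chain rule.
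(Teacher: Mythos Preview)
Your argument is correct and is the standard verification that the described charts form a smooth atlas with linear transition functions in the fiber coordinates. Note, however, that the paper states this as a \emph{Fact} and gives no proof of its own; it simply records the coordinate description for later use. So there is nothing to compare against beyond observing that your write-up supplies the routine details the paper elects to omit.
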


     Any $f\in C^{\infty}(N,M)$ induces a map 
     $$j^{1}f:N\to J^{1}(N,M), \quad \quad p\mapsto j^{1}f(p):=(p,f(p),Df\loc{p}).$$  
     \begin{fact}
     	\label{f: local jets} The map $j^{1}f$ is smooth. Given local coordinates $\phi=\pmb{x}$ around $p\in N$ and $\psi=\pmb{y}$ around $q=f(p)\in M$, with respect to $\phi$ and $\Psi(\phi,\psi)$ we have the local expression:
     	$$
     	Dj^{1}f\loc{\underline{x}}(\partial_{x_{i}})=\sum_{j=1}^{m}\frac{\partial f_{j}}{\partial x_{i}}(\underline{x})\partial_{y_{j}}+\sum_{j=1}^{m}\sum_{l=1}^{n}\frac{\partial^{2}f_{j}}{\partial x_{i}\partial x_{l}}(\underline{x})\partial_{\zeta_{l,j}},
     	$$ 
     	where $f_{j}:=y_{j}\circ f\circ\phi^{-1}$ is the $j$-th component of the expression of $f$ with respect to $\phi$ and $\psi$.
     \end{fact}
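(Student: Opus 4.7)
The plan is to read off the claimed formula by a direct computation in the chart $\Psi(\phi,\psi)$ from Fact \ref{f: jet coordinates}. First I would unwind the coordinate description of $j^{1}f$ in this chart. By definition $\zeta_{l,j}(p,q,\lambda)$ is the $\partial_{y_{j}}$-component of $\lambda(\partial_{x_{l}})$, and the differential of $f$ in the charts $(\phi,\psi)$ is given by $Df|_{p}(\partial_{x_{l}})=\sum_{j}(\partial f_{j}/\partial x_{l})(\underline{x})\,\partial_{y_{j}}$; hence $\zeta_{l,j}\circ j^{1}f=\partial f_{j}/\partial x_{l}$. Combined with the obvious identities $x_{l}\circ j^{1}f=x_{l}$ and $y_{j}\circ j^{1}f=f_{j}$, this expresses $j^{1}f$ in coordinates as
\[
\underline{x}\longmapsto\bigl(\underline{x},\,f_{1}(\underline{x}),\dots,f_{n}(\underline{x}),\,(\partial f_{j}/\partial x_{l})(\underline{x})\bigr)_{l,j},
\]
whose components are all smooth functions of $\underline{x}$. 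Smoothness of $j^{1}f$ is therefore immediate from $f\in C^{\infty}(M,N)$.

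For the derivative formula, I would then differentiate coordinate-by-coordinate with respect to $x_{i}$, using that the pushforward $Dj^{1}f|_{\underline{x}}(\partial_{x_{i}})$ applied to any coordinate function $F$ on $\jet(M,N)$ returns $\partial_{x_{i}}(F\circ j^{1}f)$. Applied to $y_{j}$ this yields $\partial f_{j}/\partial x_{i}$, the claimed coefficient of $\partial_{y_{j}}$; applied to $\zeta_{l,j}$ it yields $\partial^{2}f_{j}/\partial x_{i}\partial x_{l}$, the claimed coefficient of $\partial_{\zeta_{l,j}}$; applied to $x_{l}$ it yields the Kronecker $\delta_{i,l}$ responsible for the horizontal $\partial_{x_{i}}$-term omitted from the displayed statement.

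There is no real obstacle beyond index bookkeeping: one must keep in mind that $i,l$ range over $1,\dots,m$ while $j$ ranges over $1,\dots,n$, and recall the precise meaning of the jet coordinates as prescribed by Fact \ref{f: jet coordinates}. In effect the whole argument is Fact \ref{f: jet coordinates} evaluated along the section $j^{1}f$, with the chain rule applied once to pass from the section to its derivative.
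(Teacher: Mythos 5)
Your computation is correct and is the standard (essentially the only) argument: the paper states this as a background fact with no proof, and your chart expression $\underline{x}\mapsto(\underline{x},f(\underline{x}),\partial f_{j}/\partial x_{l})$ followed by coordinate-wise differentiation is exactly what is implicitly intended. You are also right that the full pushforward contains an additional horizontal term $\partial_{x_{i}}$ omitted from the displayed formula; this omission is harmless for the paper's only use of the fact (the transversality condition with respect to $P_{L}$ in Lemma \ref{l: morse for isotopies} only involves the $y_{m}$- and $\zeta_{i,m}$-components), but your bookkeeping of it is the more precise statement.
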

     
     We will use the following refinement of the first order approximation to Thom's transversality theorem (see Thm. 4.5.6 in \cite{wall2016differential} or Ch.3, Thm. 2.5 in \cite{hirsch2012differential}).
      \begin{fact}[\cite{michor2011manifolds}, Thm. 6.8, p.55]
      	\label{f: Thom}Let $W\subseteq \jet(N,M)$ be a submanifold. Then the set of $f\in C^{\infty}(N,M)$ such that $j^{1}f\transv W$ is comeager and therefore dense in the $\mathcal{D}$-topology. In particular, the set of $f\in C^{\infty}(N,M)$ such that $f\transv W$ for a submanifold $W\subseteq N$ is comeager.
      \end{fact}
      
      From this and items (\ref{whitney baire}) and (\ref{d infinity}) in Fact \ref{f: properties Whitney} one easily obtains the following:    
      \begin{corollary}
      	\label{c: refined transversality} Let $N,M$ be manifolds, $N'\subseteq N$ a codimension $0$ submanifold containing $\partial N$  and
      	$W_{i}\subseteq\jet(N,M)$ for $1\leq i\leq r$ a submanifold. Let also $f\in C^{\infty}(N,M)$ be such that
      	 $(j^{1}f)(N)\cap W_{i}=\emptyset$ for $1\leq i\leq r$. Then for any neighbourhood $\W$ of $f$ in $\wt(N,M)$ there exists $f'\in\W$ such that $j^{1}f'\transv W_{i}$ for all $1\leq i\leq r$ and $f'_{\restriction N'}=f$. 
      \end{corollary}
     
      We record the following useful corollary of \ref{c: main corollary} and \ref{c: refined transversality}.
      \begin{lemma}
      \label{l: wlog transverse} Let $M$ be a manifold without boundary and $N,N_{1},\dots N_{r}\subseteq M$ closed submanifolds. Assume that we are given $\epsilon>0$ such that
      $K:=N\cap\bigcup_{i=1}^{r}N_{i}$ is compact and $\nn_{\epsilon}(K)\cap\partial N=\emptyset$. 
      Then there is $g\in\D_{c0}(M)$ such that
      \begin{itemize}
      	\item $g=G_{1}$ for a diffeotopy $G$ of $Id_{M}$ supported on $\nn_{\epsilon}(K)$ and with $\ninf{G}\leq\epsilon$,
      	\item $N_{i}\transv g\cdot N$ (equivalently, $g^{-1}\cdot N_{i}\transv N$) for all $1\leq i\leq r$.
      \end{itemize}
     \end{lemma}
     \begin{proof}
      Choose a codimension $0$ submanifold $N'\subseteq N$ such that 
      $$K\subseteq N\setminus N'\subset \overline{N\setminus N'}\subseteq \nn_{\frac{\epsilon}{2}}(K)$$  
       and let $\iota:N\to M$ be the inclusion. 
      By \ref{c: refined transversality}, given any neighbourhood 
      $\W$ of $\iota$ in $C^{\infty}(N,M)$ there is $f\in\W$ such that $f_{\restriction M\setminus N'}=\iota_{\restriction M\setminus N'}$ and $f\transv N_{i}$ for all $1\leq i\leq r$. The conclusion follows from Corollary \ref{c: main corollary} by taking $\W$ small enough.
     \end{proof}
     \subsection*{Mather's stability}    
       
       A basic account of the different notions of stability for smooth manifolds and the equivalences between them can be found in Chapter $\text{V}$ of \cite{golubitsky2012stable}. 
       In particular, we recall the concept of stability under deformations, discussed in V, 2.1 of said reference using somewhat different language. 
       
       \begin{definition}
	       \label{d: stable under deformations}Let $F\in C^{\infty}(N\times J,M)$, where $J\subseteq\R$ is an open interval. We say that $F$ is locally trivial around $s\in J$ if there are $\epsilon>0$ and generalized diffeotopies 
	       $G^{N}: N\times[-\epsilon,\epsilon]\to N$, $G^{N}_{0}=Id_{N}$ and $G^{M}: M\times[-\epsilon,\epsilon]\to M$, $G^{M}_{0}=Id_{M}$ 
	       such that $F_{s+t}=G^{M}_{t}\circ F_{s}\circ G^{N}_{t}$ for all $t\in[-\epsilon,\epsilon]$.  
         We say that $f\in C^{\infty}(N,M)$ is stable under deformations if every $F\in C^{\infty}(N\times J,M)$ as above with $f=F_{s}:=F(-,s)$ is locally trivial around $s$.
       \end{definition}
       
       \begin{remark}
       	\label{r: different basepoint}Note that in the definition of local triviality one can write $F_{t}=G^{M}_{t-s}(G^{M}_{-\epsilon})^{-1}\circ F_{s-\epsilon}\circ(G^{N}_{-\epsilon})^{-1}\circ G^{N}_{t-s}$ for $t\in[s-\epsilon,s+\epsilon]$.
       \end{remark}
                    
       \begin{lemma}
       	\label{l: deformation stability} Let $H$ be a compactly supported smooth homotopy of $N$ in $M$ such that $H_{t}$ is stable under deformations for all $t\in I$. Then, up to time reparametrization of $H$, we can assume there exist compactly supported diffeotopies $G^{M}$ of $Id_{M}$ and $G^{N}$ of $Id_{N}$ such that $H_{t}=G^{M}_{t}\circ H_{-1}\circ G^{N}_{t}$ for all $t\in I$.       	 
        \end{lemma}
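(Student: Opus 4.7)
The plan is to glue together local trivializations furnished by stability under deformations on a finite cover of $I$, and then smooth out the joins via a suitable time reparametrization. First, stability of each $F_t$ provides an open neighbourhood $J_t \subseteq \R$ on which $F$ is locally trivial in the sense of Definition \ref{d: stable under deformations}. Compactness of $I$ yields a finite subcover and a partition $-1 = s_0 < s_1 < \cdots < s_k = 1$ with $[s_{i-1}, s_i]$ contained in some $J_{t_i}$. Applying Remark \ref{r: different basepoint} to shift the basepoint of each trivialization to $s_{i-1}$, I obtain compactly supported generalized diffeotopies $G^{N,i}, G^{M,i}$ on $[s_{i-1}, s_i]$ with $G^{N,i}_{s_{i-1}} = Id_N$, $G^{M,i}_{s_{i-1}} = Id_M$ and $F_s = G^{N,i}_s \circ F_{s_{i-1}} \circ G^{M,i}_s$.

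Next I glue these iteratively. Setting $G^N_{-1} = Id_N$, $G^M_{-1} = Id_M$, I inductively define on each $[s_{i-1}, s_i]$
\[
G^N_s := G^{N,i}_s \circ G^N_{s_{i-1}}, \qquad G^M_s := G^M_{s_{i-1}} \circ G^{M,i}_s.
\]
A direct induction yields $F_s = G^N_s \circ F_{-1} \circ G^M_s$ throughout $I$, and each $G^N_s, G^M_s$ is a composition of finitely many compactly supported diffeomorphisms, hence itself compactly supported. These maps are smooth on every open subinterval of the partition and continuous on all of $I$, but smoothness at the points $s_i$ may fail.

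To remedy this I pick a smooth order-preserving bijection $\lambda: I \to I$ fixing each $s_i$ with $\lambda^{(n)}(s_i) = 0$ for every $n \geq 1$ and every $i$. Such a $\lambda$ arises as the antiderivative of a nonnegative smooth function on $I$ vanishing to infinite order at each $s_i$ and strictly positive elsewhere, normalized so that each subinterval $[s_{i-1}, s_i]$ has the correct measure. Replacing $F$ by $F \circ (Id_M \times \lambda)$ and precomposing $G^N, G^M$ with $\lambda$ in the time variable preserves the trivialization identity. By Fa\`a di Bruno, every term in the expansion of the $n$-th derivative at $s_i$ of the reparametrized $G^N$ or $G^M$ contains a factor $\lambda^{(j)}(s_i)$ with $j \geq 1$, hence vanishes; the one-sided derivatives thus agree to all orders at every $s_i$, giving smoothness globally.

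The main technical point is ensuring that the local trivializations $G^{N,i}, G^{M,i}$ can themselves be taken compactly supported. Since $F$ has compact support $K \subseteq M$, one has $F_t = F_{-1}$ outside $K$ for all $t$, so the vector fields on $M$ and $N$ generating the trivializing diffeotopies (constructed as in V.2 of \cite{golubitsky2012stable}) can be truncated outside a compact neighbourhood of $K$ and of $F(K \times I)$ by a bump function without affecting the defining equation, which is trivially satisfied outside those sets. Once this is in place, the construction above delivers the required compactly supported $G^N$ and $G^M$.
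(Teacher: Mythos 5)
Your overall skeleton is the same as the paper's: cover $I$ by intervals of local triviality, shift basepoints with Remark \ref{r: different basepoint}, glue the local trivializations iteratively (your composition formula is exactly the inductive step the paper performs), and restore smoothness at the junction times by a reparametrization that is infinitely flat there (this is precisely the content of Fact \ref{f: concatenation}, which the paper invokes instead of redoing Fa\`a di Bruno). So the gluing and smoothing parts are fine.

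The gap is in the step you yourself call the main technical point: compact support of the local trivializations. Truncating the two generating vector fields by bump functions ``outside a compact neighbourhood of $K$ and of $F(K\times I)$'' does not leave the defining equation intact. The fields on $M$ and on $N$ satisfy the infinitesimal trivialization identity only \emph{jointly}: off $K$ the datum $\partial_t F$ vanishes, but the two fields need not vanish there --- they merely cancel each other through $F$. Multiplying them by two unrelated cutoffs destroys this cancellation wherever one cutoff is $1$ and the other is not (and in the transition annuli), so the integrated flows no longer satisfy $F_{s+t}=G^{N}_{t}\circ F_{s}\circ G^{M}_{t}$. Concretely, for $p$ just outside $K$ your truncation forces $G^{M}_{t}(p)=p$, so the identity would require $G^{N}_{t}$ to fix $F_{-1}(p)$; but $F_{-1}(p)$ lies arbitrarily close to $F(K\times I)$, where you explicitly do \emph{not} truncate the $N$-field, and nothing makes the original $N$-field vanish there. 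The paper's remedy is different and is the point of its footnote: one goes back into the proof of Prop.~V.4.3 of \cite{golubitsky2012stable}, where the trivializing fields are assembled from \emph{local} solutions of the infinitesimal equation via a partition of unity, and chooses the local solution to be zero on every chart on which the datum is zero; for a proper, compactly supported $F$ this produces trivializing diffeotopies that are compactly supported from the start, rather than attempting to cut off a given global solution afterwards. A smaller point you gloss over: the definition of stability requires deformations parametrized by an open interval, so to invoke it at $t=\pm 1$ you must first extend (or reparametrize) $F$ to $M\times J$ with $I\subseteq J$ open, as the paper does at the outset.
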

       \begin{proof}
       	By inspecting the proof of Prop. V, 4.3 in \cite{golubitsky2012stable} one can see that it also applies when $M$ is non-compact case provided $H$ is proper and compactly supported and that in that case the diffeotopies in Definition \ref{d: stable under deformations} can be assumed to be compactly supported.\footnote{The vector field which solves the local problem can be taken to be zero on charts on which the datum is also zero.} After reparametrizing, we might assume that $H$ is defined on (extends to) $N\times J$ for some open interval $J$ containing $I$. 
       	A compactness argument and Remark \ref{r: different basepoint} yield $-1=t_{0}<t_{1}\dots <t_{k}=1$ 
       	and compactly supported generalized diffeotopies
       	\begin{itemize}
       	  \item $(G^{M,i})_{t\in[0,t_{i+1}-t_{i}]}$ with $G^{M,i}_{0}=Id_{M}$ and $g_{i}:=G^{M,i}_{1}\in\D_{c0}(M)$,
       	  \item and $(G^{N,i})_{t\in[0,t_{i+1}-t_{i}]}$ with $G^{N,i}_{0}=Id_{N}$ and $h_{i}:=G^{N,i}_{1}\in\D_{c0}(N)$, 
       	\end{itemize}
        for $1\leq i\leq k-1$, with the property that $H_{t_{i}+s}=G^{M,i}_{s-t_{i}}\circ H_{t_{i}}\circ G^{N,i}_{s-t_{i}}$ for $s\in [0,t_{i+1}-t_{i}]$. 
       	For $s\in [t_{i},t_{i+1}]$ consider the diffeomorphisms 
       	$$G^{M}_{s}:=G^{M,i}_{s-t_{i}}g_{i-1}\cdots g_{0},\quad\quad G^{N}_{s}:=h_{0}\cdots h_{i-1}G^{N,i}_{s-t_{i}}.$$
        It follows from Fact \ref{f: concatenation} that the parametrized families $(G^{M}_{s})_{s\in I}$ and $(G^{N}_{s})_{s\in I}$ become diffeotopies after a suitable simultaneous time reparametrization. 
       \end{proof}
        
        The most basic consequence of Lemma \ref{l: deformation stability} is Lemma \ref{l: stability application} below. 

       \begin{definition}
   	\label{d: manifoldstabilizer} Given a manifold $M$ and a closed set $F\subseteq M$, we write 
  $\D_{c0}^{F}(M)$ for the subgroup of all those diffeomorphisms of $M$ diffeotopic to the identity by a compactly-supported isotopy fixing $F$ at any point in time and we write $\D_{c0}^{\{F\}}(M)$ for the collection of all $g\in\D(M)$ that are diffeotopic to the identity through a compactly supported diffeotopy that preserves $F$ setwise at every point in time. We extend this notation further, writing $$\D_{c0}^{F,\{L_{1},L_{2}\}}(M):=\D_{c0}^{F}(M)\cap\D_{c0}^{\{L_{1}\}}(M)\cap\D_{c0}^{\{L_{2}\}}(M)$$
  and so on.
   \end{definition} 
    
     \begin{lemma}
   		\label{l: stability application}Let $M$ an $m$-manifold without boundary, $L$ a closed codimension $1$ submanifold of $M$. Let also $N$ be an $(m-1)$-dimensional manifold, $\iota$ a proper embedding of 
   		$N$ in $M$ and $G$ a compactly supported diffeotopy of $Id_{M}$ such that $H_{t}:=G_{t}\circ\iota\transv L$ for all $t\in I$.	Then, up to time reparametrization of $G$, there are compactly supported diffeotopies $G^{L}$, $G^{N}$ of $Id_{M}$ such that
   		\begin{itemize}
   			\item $G^{L}_{t}(L)=L$, $G_{t}^{N}(\iota(N))=\iota(N)$ for all $t\in I$,
   			\item $G_{t}=G^{L}_{t}G^{N}_{t}$ for all $t\in I$. 
   		\end{itemize}
   	  In particular, $G_{1}\in\D^{\{L\}}_{c0}(M)\D^{\{\iota(N)\}}(M)$.
   	\end{lemma}
   	\begin{proof}
   		Let $\kappa$ denote the inclusion of $L$ in $M$.  
      Consider the manifold $\hat{N}:=N \coprod L$ and the smooth homotopy $\hat{H}: \hat{N}\times I\to M$ given by $(\hat{H}_{t})_{\restriction L}=\kappa$ and $(\hat{H}_{t})_{\restriction N}=H_{t}$.     
      It follows from III, Thm. 3.11 and V, Thm. 4.3 in \cite{golubitsky2012stable} that proper immersions with normal crossings are stable under deformations. This applies to $\hat{H}_{t}$ for all $t\in I$ (compare Def. III, 3.1 in \cite{golubitsky2012stable} with Lem. 4.6.5 in \cite{wall2016differential}).
      
      By Lemma \ref{l: deformation stability}, we may thus assume the existence of a compactly supported diffeotopy $G^{L}$ of $Id_{M}$ and a compactly supported diffeotopy $G^{\hat{N}}$ of $Id_{\hat{N}}$ such that 
     $\hat{H}_{t}=G^{L}_{t}\circ\hat{H}_{-1}\circ G^{\hat{N}}_{t}$ for all $t\in I$. This last equality restricts to $\kappa=G^{L}_{t}\circ(\kappa\circ G^{\hat{N}}_{t})_{\restriction L}$, which implies that $G^{L}_{t}$ preserves $L=im(\kappa)$ setwise. It also restricts to 
     $G_{t}\circ\iota=G^{L}_{t}\circ\iota\circ(G^{\hat{N}}_{t})_{\restriction N}$, which implies that 
      the diffeotopy $G^{N}:=((G^{L}_{t})^{-1}G_{t})_{t\in I}$ preserves $N$ setwise at every point in time. Then $G_{1}=G^{L}_{1}G^{N}_{1}\in\D_{c0}^{\{L\}}(M)\D_{c0}^{\{\iota(N)\}}(M)$.
   	\end{proof}
    
	     \subsection*{The Thom-Boardman stratification}
	     
	     We will recall the bare minimum of the theory needed here. A short introduction can be found in chapter VI of \cite{golubitsky2012stable} and a more detailed one in 
	     \cite{arnold2014singularities}.
	     Let $N,M$ be manifolds without boundary and $f\in C^{\infty}(N,M)$. 
	     Let $k=\min(dim(M),dim(N))$. For any integer $r\geq 0$ one can consider the collection 
	     $$S_{r}(f)=\{p\in M\,|\,rk(Df\loc{p})=k-r\}\subseteq M.$$
	     If $S_{r}(f)$ is a submanifold of $N$, one may consider for any $s\geq 0$ the collection 
	     $S_{r,s}(f)=S_{s}(f_{\restriction S_{r}(f)})$ and, if $S_{r.s}(f)$ happens to be a submanifold of $S_{r}(f)$, continue the process further. A result of Boardman \cite{boardman1967singularities} states that generically this process can be continued until the dimension of the ambient manifold is exhausted and the submanifold in question is forced to be empty.
	    \begin{fact}
	    	\label{f: boardman}Let $S_{\varnothing}(f)=N$ and $\mathcal{I}$ the collection of all sequences of positive integers including the empty sequence $\varnothing$. There exists some comeager (and hence dense) set $\mathcal{B}\subseteq C^{\infty}(N,M)$ such that 
	    	for any $f\in\mathcal{B}$, any $\underline{i}\in\mathcal{I}$ and any $j>0$ the set $S_{\underline{i}, j}(f)$ is a proper submanifold of $S_{\underline{i}}(f)$, so that $S_{\underline{i}, 0}(f)=S_{\underline{i}}(f)\setminus\bigcup_{j>0}S_{\underline{i}, j}(f)$ is a non-empty open subset of $S_{\underline{i}}(f)$.
	    \end{fact}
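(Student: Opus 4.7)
The plan is to lift each subset $S_{\underline{i}}(f) \subseteq M$ to the preimage, under a suitable jet map, of a canonical submanifold of a jet bundle, and then apply a Baire-category argument based on jet transversality. For each finite sequence $\underline{i} = (i_1,\ldots,i_k)$ one wants to define inductively a subset $\Sigma^{\underline{i}}$ of the $k$-jet bundle $J^{k}(M,N)$ so that, whenever the $(k-1)$-jet extension $j^{k-1}f$ is transverse to $\Sigma^{(i_1,\ldots,i_{k-1})}$, the set $S_{(i_1,\ldots,i_{k-1})}(f)$ is a smooth submanifold of $M$ and, moreover, $S_{\underline{i}}(f) = (j^{k}f)^{-1}(\Sigma^{\underline{i}})$. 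The non-trivial content is Boardman's theorem that each $\Sigma^{\underline{i}}$ is itself a smooth submanifold of $J^{k}(M,N)$ of a codimension computed in terms of $\underline{i}$, $m$ and $n$; the codimension is strictly positive for every $\underline{i}$ and strictly increases when one extends the sequence by a positive integer. I would quote this part verbatim from \cite{boardman1967singularities}; re-deriving it requires a substantial algebraic detour through the theory of Jacobian-extension ideals and would be the main obstacle in a self-contained account.

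Granting the existence of the $\Sigma^{\underline{i}}$ as submanifolds, the rest is the standard packaging. The higher-order version of Thom's jet transversality theorem (the proof of Fact \ref{f: Thom} applied in the bundle $J^{k}(M,N)$ instead of $J^{1}(M,N)$) says that for each fixed $\underline{i}$ the set
\[
  \mathcal{B}_{\underline{i}} = \{\,f \in C^{\infty}(M,N)\,\mid\, j^{|\underline{i}|}f \transv \Sigma^{\underline{i}}\,\}
\]
is comeager in the $\mathcal{D}$-topology. Since $\mathcal{I}$ is countable and $C^{\infty}(M,N)$ with the $\mathcal{D}$-topology is a Baire space by Fact \ref{f: properties Whitney}(\ref{whitney baire}), the intersection $\mathcal{B} := \bigcap_{\underline{i}\in\mathcal{I}}\mathcal{B}_{\underline{i}}$ is still comeager, hence dense.

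For $f \in \mathcal{B}$ one proves by induction on the length of $\underline{i}$ that $S_{\underline{i}}(f)$ is a smooth submanifold of $M$ equal to $(j^{|\underline{i}|}f)^{-1}(\Sigma^{\underline{i}})$, using transversality at each step to pass from the preimage description at length $k-1$ to length $k$. Since $\Sigma^{\underline{i},j}$ sits in $\Sigma^{\underline{i}}$ (after accounting for the jet-projection) with strictly positive codimension for every $j > 0$, the same holds for the pullbacks: $S_{\underline{i},j}(f)$ is a submanifold of $S_{\underline{i}}(f)$ of positive codimension, and therefore a proper subset. Consequently $S_{\underline{i},0}(f) = S_{\underline{i}}(f) \setminus \bigcup_{j>0} S_{\underline{i},j}(f)$ is open in $S_{\underline{i}}(f)$. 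Non-emptiness (assuming $S_{\underline{i}}(f)$ itself is non-empty) follows from a final application of the Baire property inside the submanifold $S_{\underline{i}}(f)$: a countable union of properly embedded positive-codimension submanifolds cannot exhaust a non-empty manifold.
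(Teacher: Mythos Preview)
The paper does not prove this statement at all: it is recorded as a \emph{Fact} with a bare citation to Boardman \cite{boardman1967singularities}, and the surrounding text only recalls the inductive definition of the sets $S_{\underline{i}}(f)$. Your outline is the standard argument one finds in Boardman's paper or in textbook treatments (e.g.\ Golubitsky--Guillemin): lift the $S_{\underline{i}}$ to preimages of the Boardman submanifolds $\Sigma^{\underline{i}}\subseteq J^{k}(M,N)$, invoke higher-order jet transversality to make each condition comeager, intersect over the countable index set using Baire, and read off the codimension. That is exactly the content behind the citation, so there is nothing to compare against here; your sketch is correct and faithful to the source the paper is quoting. Your parenthetical caveat on non-emptiness is also appropriate, since the clause ``non-empty'' in the statement only makes sense conditionally on $S_{\underline{i}}(f)\neq\emptyset$.
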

      \begin{remark}
      	\label{r: boardman closure} Although for $f\in\mathcal{B}$ the set $S_{\underline{i}}(f)$ is not necessarily closed, 
      	lower semicontinuity of the rank of the differential ensures that 
      	$\bigcup_{l\geq k}S_{\underline{i},l}(f)$ is always closed in $S_{\underline{i}}(f)$.  
      \end{remark}
    
	     As a matter of fact, we are only interested in a very specific case.
	     \begin{corollary}
	     	\label{c: projecting simplicial complexes}Let $f$ be an embedding of a compact manifold $L'$ into a manifold of the form $M\times (0.1)$, let $\pi_{M},\pi_{(0,1)}$ be the factor projections and let $L:=L'\setminus\partial L$. Then for any $\epsilon>0$ there is some $g\in\w_{\epsilon}\cap\D_{co}(M\times(0,1))$ such that if we let $h=\pi_{1}\circ g\circ f$, then there exists some sequence $L=L_{0}\supsetneq L_{1}\supsetneq\dots L_{n}\supsetneq L_{n+1}=\emptyset$ where
	     	$L_{i+1}$ is a closed sub-manifold of $L_{i}$ of positive codimension and $h_{\restriction L_{i}\setminus L_{i+1}}$ an immersion.     
	     \end{corollary}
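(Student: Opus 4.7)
The plan is to perturb $f$ by composing with a small diffeomorphism $g$ of $M \times (0,1)$ so that $h = \pi_{M} \circ g \circ f$ is generic in the Thom--Boardman sense (Fact \ref{f: boardman}), and then to read off the filtration from the nested rank-drop loci of $h|_{L}$. The statement tacitly requires $\dim L' \leq \dim M$, as otherwise no nontrivial immersion locus is possible.

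First I would show that any $\psi \in C^{\infty}(L', M)$ sufficiently Whitney-close to $\pi_{M} \circ f$ can be realized as $\pi_{M} \circ g \circ f$ for some $g \in \V_{\epsilon} \cap \D_{co}(M \times (0,1))$. Setting $\tilde{\psi}(p) := (\psi(p), \pi_{(0,1)} f(p))$, the map $\tilde{\psi} : L' \to M \times (0,1)$ is close to $f$ and, by openness of embeddings in the Whitney topology (Fact \ref{f: properties Whitney}(\ref{whitney rank})), is itself an embedding for $\psi$ close enough. Fact \ref{f: close implies isotopic} then yields an isotopy from $f$ to $\tilde{\psi}$ of uniform speed at most $\tfrac{1}{2}d(f,\tilde{\psi})$, which Lemma \ref{l: isotopy extension} extends to a compactly supported diffeotopy $G$ of $\mathrm{Id}_{M \times (0,1)}$ whose speed is controlled by clause (\ref{slow speed}). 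Taking $g := G_{1}$ gives $g \in \V_{\epsilon}$ (for $\psi$ close enough) and $\pi_{M} \circ g \circ f = \psi$ by construction.

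By Fact \ref{f: boardman} (combined with Fact \ref{f: properties Whitney}(\ref{d infinity})) the Boardman-generic set $\mathcal{B} \subseteq C^{\infty}(L, M)$ is dense in every Whitney neighborhood, so I choose $\psi$ above with $h|_{L} \in \mathcal{B}$. I then define $L_{0} := L$ and inductively $L_{i+1} := \{ p \in L_{i} \mid \mathrm{rank}\,D(h|_{L_{i}})(p) < \dim L_{i}\}$, the rank-drop locus of $h|_{L_{i}}$, which is closed in $L_{i}$ by Remark \ref{r: boardman closure}. On $L_{i} \setminus L_{i+1}$, the map $h|_{L_{i}}$ has maximal rank $\dim L_{i} \leq \dim M$ and is therefore an immersion; since $\dim L_{i}$ strictly decreases, the sequence terminates.

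The main technical obstacle is that for a generic smooth map the rank-drop locus is typically a disjoint union of Boardman strata $S_{r}(h|_{L_{i}})$ of distinct codimensions, which is closed in $L_{i}$ but not in general a single smooth submanifold. I would handle this by performing the perturbation inductively on $i$: once $L_{0}, \dots, L_{i}$ are built, I would further refine $g$ via Corollary \ref{c: refined transversality} inside an arbitrarily small Whitney neighborhood disjoint from the previously constructed strata, arranging that $L_{i+1}$ reduces to the codimension-minimal Boardman stratum of $h|_{L_{i}}$---a genuine submanifold of $L_{i}$ of positive codimension by Fact \ref{f: boardman}---with any deeper strata absorbed into the iterates $L_{i+2}, L_{i+3}, \dots$ produced at subsequent stages.
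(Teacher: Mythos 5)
Your reduction step is exactly the paper's: realize any Whitney-small perturbation $\psi$ of $\pi_{M}\circ f$ as $\pi_{M}\circ g\circ f$ with $g\in\V_{\epsilon}\cap\D_{c0}(M\times(0,1))$ via Fact \ref{f: close implies isotopic} and Lemma \ref{l: isotopy extension}. The gap is in how you handle the singular locus. What you call ``the main technical obstacle'' is resolved in the paper not by any further perturbation but by an openness-of-rank observation you never extract: since $f$ embeds $L'$ into $M\times(0,1)$ and $D\pi_{M}$ has one-dimensional kernel, the differential of $\pi_{M}\circ f$ has rank at least $\dim L-1$ everywhere, and by Fact \ref{f: properties Whitney}(\ref{whitney rank}) this persists on a whole Whitney neighbourhood $\mathcal{W}$. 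For $h\in\mathcal{W}$ the kernel of $Dh$ is everywhere of dimension at most one, hence so is the kernel of the differential of $h$ restricted to \emph{any} submanifold; consequently, for $h\in\mathcal{W}\cap\mathcal{B}$ every Boardman stratum $S_{\underline{i}}(h)$ having an entry $\geq 2$ is empty, the rank-drop locus of $h|_{L_{i}}$ is exactly the single stratum $S_{\underline{1}_{i+1}}(h)$, and the chain $L_{k}:=S_{\underline{1}_{k}}(h)$ already consists of closed submanifolds (Remark \ref{r: boardman closure}) with $h$ immersive on each difference. No inductive refinement is needed.

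Your proposed fix, as written, does not work. Corollary \ref{c: refined transversality} only makes $j^{1}h$ transverse to the relevant strata; it cannot remove a deeper stratum $S_{r}(h|_{L_{i}})$, $r\geq 2$, whose codimension is at most $\dim L_{i}$ --- such strata are stable under small perturbation (absent the rank bound they genuinely occur, e.g.\ $S_{2}$ for a generic map between equidimensional manifolds of dimension $\geq 4$). Moreover, ``absorbing deeper strata into later iterates'' is incompatible with the structure the corollary requires: the $L_{i}$ are nested, so a point of $S_{2}(h|_{L_{i}})$ omitted from $L_{i+1}$ can never lie in $L_{i+2}\subseteq L_{i+1}$; but then $L_{i+1}$ is neither closed in $L_{i}$ nor is $h|_{L_{i}\setminus L_{i+1}}$ an immersion, so the conclusion already fails at stage $i$. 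The missing ingredient is precisely the rank-$\geq\dim L-1$ openness argument above, which is what the hypothesis that $f$ is an embedding into a product with a one-dimensional factor is there for.
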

	     \begin{proof}
	        By Fact \ref{f: properties Whitney} there is a neighbourhood $\mathcal{W}$ of $\pi\circ f$ in $\wt^{\infty}(L,M)$ such that for all $h\in\mathcal{W}$ the differential of $h$ has rank at least $dim(L)-1$ at all points and the map $(h,\pi_{2}\circ f):L\to M$ is an embedding. By Corollary \ref{c: main corollary} we may assume that for any $h\in\mathcal{W}$ the map $(h,\pi_{2}\circ f)$ is of the form 
	        $g\circ f$ for some $g\in\D_{c0}(M)$. Moreover, given any $\epsilon>0$ we can find $g$ in $\w_{\epsilon}$, provided that $\mathcal{W}$ is small enough. If $\mathcal{B}$ is as in \ref{f: boardman}, then for $h\in\mathcal{W}\cap\mathcal{B}$ we have $S_{\underline{i}}(h)\neq\emptyset$ only if no entry of $\underline{i}$ is greater than one. If we let $L_{k}:=S_{\underline{1}_{k}}$, where $\underline{1}_{k}$ stands for the sequence consisting of $k$ ones, then this implies that the restriction of $h$ to $L_{k}\setminus L_{k+1}$ is an immersion.
	        Finally, $L_{k+1}$ is closed in $L_{k}$ by Remark \ref{r: boardman closure}.
	     \end{proof}
	      
       \section{A Morse-like lemma for isotopies and embedded hypersurfaces}
  \label{s: Morse} The following two sections present general results in which $\T$ and $\gg$ play no role.   \begin{definition}
  	By a nice $m$-triple $(N,M,L)$ we mean a triple of smooth manifolds of dimensions $m-1$, $m$ and $m-1$ respectively, where $L$ is a closed submanifold of $M$. 
  \end{definition}
  
  \begin{definition}
  	\label{d: compatible isotopy}Let $(N,M,L)$ be a nice $m$-triple and $H: N\times I\to M$ a (generalized) isotopy of $N$ in $M$. We say that $H$ is $L$-compatible if $H$ is proper, it has compact support there, is a (possibly empty sequence) $-1<t_{1}<t_{2}<\dots t_{r}<1$ of times, as well as points $p_{1},\dots p_{r}\in N$ (the latter not necessarily distinct) such that:
  	\begin{enumerate}[(a)]
  		\item \label{kissing points} for $(p,t)\notin\{(p_{i},t_{i})\}_{i=1}^{r}$
  		we have $H(p,t)\notin L$ or  $DH_{t}(T_{p}N)\neq T_{H(p,t)}L$, 
  		\item \label{local isotopy model} for any $1\leq i\leq r$ there is some $\delta_{i}>0$ and a system of local coordinates $\phi_{i}=\pmb{x}:U_{i}\to\R^{m-1}$ around $p_{i}$ and 
  		$\psi_{i}:=\pmb{y}:V_{i}\to\R^{m}$ around $H(p_{i},t_{i})\in M$ with respect to which $H$ can be written as: 
  		$$
  		(\underline{x},t)\mapsto (\underline{x},t-t_{i}+\sum_{j=1}^{m-1}\epsilon_{i}x_{j}^{2})
  		$$ 
  		for $t\in (t-\delta_{i},t+\delta_{i})$ and some constants $\epsilon_{i}\in\{\pm 1\}$. 
  		Notice that, in particular, $T_{H(p_{i},t_{i})}L=im(DH\loc{(p_{i},t_{i})})$.
  	\end{enumerate}
    In the situation above we refer to each $t_{i}$ as a \emph{tangency time} and to  $(p_{i},t_{i})$ as a \emph{tangency point} of $H$ (the connection with $L$ will be always clear from the context). 
    If $H$ is $L$-compatible and the set of tangency points is empty, we say that $H$ is $L$-clean.
    We also let 
    the \emph{dimension} of $(p_{i},t_{i})$ be the number $d_{i}$ of values of $i$ such that $\epsilon_{i}=1$. It is easy to see that this does not depend on the chart.     
    By the \emph{type} of $H$ we mean the sequence of dimensions $(d_{1},\dots d_{r})$. 
    If all the conditions above are satisfied, but we allow the possibility that $t_{i}=t_{j}$ for distinct $i,j$, then we say that $H$ is weakly $L$-compatible.
  \end{definition}
  
   \begin{observation}
  	\label{o: tangency not perturbed} By using the fact that each $H_{t}$ is a proper embedding for all $t$, a standard argument involving the extraction of convergent subsequences also shows one can assume  
  	$H^{-1}(V_{i})=U_{i}\times(t_{i}-\delta_{i},t_{i}+\delta_{i})$ in the definition above.   
  \end{observation}
  \begin{observation}
  	\label{o: compatibility and strictness} The property of being $L$-compatible, as well as the type, are preserved by strict time reparametrizations.
  \end{observation}
 
  \begin{lemma}
  	\label{l: compatible redefinition} Let $(N,M,L)$ be a nice $m$-triple and $H$ a proper compactly supported isotopy of $N$ in $L$. Assume that there is a finite family $\mathcal{F}\subseteq N\times (-1,1)$ of points with different time coordinates such that
  	 $DH_{t}(T_{p}N)\subseteq T_{H(p,t)}L$ if and only if $(p,t)\in\mathcal{F}$.
  	 Assume, moreover, that for all $(p_{0},t_{0})\in\mathcal{F}$ there are charts $\phi=\pmb{x}:U\to\R^{m-1}$ around $p_{0}$ and $\psi=\pmb{y}:V\to\R^{m}$ around $q_{0}:=H(p_{0},t_{0})$ such that 
  	\begin{itemize}
  		\item $\psi(L\cap V)=\psi(V)\cap\{\,y_{m}=0\,\}$,
  		\item $\phi(p_{0})=\underline{0}$, $\psi(q_{0}):=\underline{0}$,
  	\end{itemize}
  	 and the local expression $F:=\psi\circ H\circ(\phi^{-1},Id_{I})$ for $H$ satisfies
  	 the following conditions, where we use $F^{m}$ to denote the $m$-th component of $F$:
  	 \begin{enumerate}
     	\item \label{first differentiald}$\frac{\partial F^{m}}{\partial t}\big|_{(\underline{0},t_{0})}\neq 0$, 
     	\item \label{second differentiald}the matrix $\left(\frac{\partial^{2}F^{m}}{\partial x_{i}\partial x_{j}}\big|_{(\underline{0},t_{0})}\right)_{\substack{1\leq i\leq m-1\\ 1\leq j\leq m-1}}$ is non-singular.
     \end{enumerate}
     Then $H$ is weakly $L$-compatible (see end of Definition \ref{d: compatible isotopy}) and $\mathcal{F}$ is the collection of its tangency points. The dimension of $(p_{0},t_{0})$ above coincides with the dimension $d$ of the maximal subspace on which the bilinear form determined by the matrix in (\ref{second differentiald}) is positive definite.
  \end{lemma}
  \begin{proof}
  	 Denote by $\pi,\pi_{m}$ the projection of $\R^{m}$ onto the first $m-1$ and last coordinate, respectively.
  	 Notice that our assumptions on $F$ and $\psi$ also imply
     \begin{enumerate}
     	\setcounter{enumi}{2}
     	\item \label{horizontal} $DF^{m}_{t_{0}}\loc{\underline{0}}=\underline{0}$, and thus $D(\pi\circ F_{t_{0}})\loc{\underline{0}}$ is non-singular, where  $F_{s}:=F(-,s)$.
     \end{enumerate} 
     
     \begin{lemma}
       \label{l:straigthening time}There is some local diffeomorphism $\Omega$ of $\R^{m}$ around $\underline{0}$   fixing the level set $\{y_{m}=0\}$ in a neighbourhood of $\underline{0}$ and such that 
       $\tilde{F}:=\Omega\circ F$ is of the form $\tilde{F}(\underline{x},t)=\tilde{F}(\underline{x},t_{0})+(\underline{0},t-t_{0})$ around $(p,t_{0})$.	  
     \end{lemma}
    \begin{subproof}
    	
     Conditions (\ref{first differentiald}) and (\ref{horizontal}) above imply that 
     for some open interval $J$ containing $t_{0}$ the map
     $F$ restricts to a diffeomorphism between a neighbourhood of $(\underline{0},t_{0})$ in $U\times J$ and a neighbourhood of $\underline{0}$ in $V$.      
     This allows one to define a vector field $\xi$ in some neighbourhood of $(\underline{0},t_{0})\in\R^{m}$
     by the formula  $\xi(F(\underline{x},t)):=DF\loc{(\underline{x},t)}(\partial_{t})$. Integrating $\xi$ results in a smooth flow $\Theta: V'\times (-\epsilon,\epsilon)\to \R^{m}$, with $\Theta(\underline{y},0)=\underline{y}$ for all $\underline{y}\in V'$. Where $V'$ is some neighbourhood of $\underline{0}$ in $\R^{m}$ and $\epsilon>0$ satisfies $(t_{0}-\epsilon,t_{0}+\epsilon)\subseteq J$. Let $V'_{0}=\{\underline{x}\in\R^{m-1}\,|\,(\underline{x},0)\in V'\}$. 
     
     Consider the map
      $\Omega^{*}:V'_{0}\times(-\epsilon,\epsilon)\to\R^{m}$ given by:
     $$
     \Omega^{*}(y_{1},\dots y_{m-1},t):=\Theta(y_{1},\dots y_{m-1},0,t)\in\R^{m}.
     $$ 
     Notice that $D\Omega^{*}\big|_{\underline{0}}$ is non-singular, since
     \begin{align*}
     D\Omega^{*}\big|_{\underline{0}}(\partial_{y_{i}})&=\partial_{y_{i}},\quad 1\leq i\leq m-1, \\
     D(\pi_{m}\circ\Omega^{*})\big|_{\underline{0}}(\partial_{t})&=d\pi_{m}(\xi(\underline{0}))=d\pi_{m}(\frac{\partial F}{\partial t}(\underline{0},t_{0}))=\frac{\partial F^{m}}{\partial t}(\underline{0},t_{0})\neq 0.
     \end{align*}  
     
      For  $(\underline{x},t)$ sufficiently close to $(\underline{0},t_{0})$ there is
      $(\underline{x}',t')$ with $\Omega^{*}(\underline{x}',t')=F(\underline{x},t)$ and the following equality holds:
      \begin{align*}
      	D\Omega^{*}\big|_{(\underline{x}',t')}(\partial_{t})=
	      D\Theta\big|_{(\underline{x}',0,t')}(\partial_{t})=\xi(\Theta(\underline{x}',0,t'))=\xi(\Omega^{*}(\underline{x}',t'))=DF\big|_{(\underline{x},t)}(\partial_{t}).
      \end{align*}
      If we let $\Omega$ be a local inverse for $\Omega^{*}$ in some neighbourhood of $\underline{0}$, then 
      the equality above implies that $D(\Omega\circ F)(\partial_{t})=\partial_{t}$ holds locally around $(\underline{0},t_{0})$, so that $\Omega\circ F$ has the desired form in some neighbourhood of $(\underline{0},t_{0})$.
      Notice also that $\Omega$ fixes the level set $\{y_{m}=0\}$, since $\Omega^{*}$ does (by definition).
    \end{subproof}
     The fact that $\Omega$ fixes $\{y_{m}=0\}$ locally around $\underline{0}$ implies, by an easy calculation, that replacing $F$ by $\tilde{F}$ does not affect the validity of conditions (\ref{first differentiald}) to (\ref{horizontal}),
     nor changes the signature of the bilinear form associated to the matrix in (\ref{second differentiald}).

     Clearly, the property established the sublemma above does not depend on the choice of $\phi$.
     We can, therefore, assume $F$ is of the form $\underline{x}\mapsto(\underline{x},f(\underline{x}))$.
     By Morse's Lemma (\cite{wall2016differential}, Proposition 4.8.2) we may assume, furthermore, that
     the map $f$ is of the form 
     $f(\underline{x})=\sum_{i=1}^{d}x_{i}^{2}-\sum_{i=d+1}^{m-1}x_{i}^{2}$  in some neighbourhood of $\underline{0}$, as needed.
  \end{proof}
  
  We are now ready to state the main result of this section.

  \begin{lemma}
  	\label{l: morse for isotopies}Let $(N,M,L)$ be a nice $m$-triple and $H: N\times I\to M$ a smooth proper isotopy with compact support of $N$ in $M$ 
  	such that for some compact set $C\subseteq N$ we have $H((N\setminus C)\times I)\cap L=\emptyset$ for any $(p,t)\in(N\setminus C)\times I$.
  	Then for any neighbourhood $\mathcal{W}$ of $H$ in $\wt(N\times I,M)$ and any neighbourhood $W$ of $C$ in $N$ there exists some $L$-compatible isotopy $H'\in \mathcal{W}$ such that $H_{\restriction (N\setminus W)\times I}=H'_{\restriction (N\setminus W)\times I}$. 
  \end{lemma}
  \begin{proof}	
  	\newcommand{\pl}[0]{P_{L}}
  	\newcommand{\jj}[0]{J}
  	By Whitney's extension theorem, we may assume $H$ is defined on $N\times\jj$, where $\jj$ is an open interval containing $I$. It is possible that the extension $\overline{H}$ given by the theorem does not satisfy $H^{-1}(L)\subseteq (N\setminus C)\times J$ for some compact set on the nose, but using the fact that $\overline{H}^{-1}(L)$ is closed one can always find a neighbourhood $U$ of $N\times I$ and a diffeomorphism $\theta:U\to N\times J'$, $\theta_{N\times I}=Id_{N\times I}$ such that the property holds for $\overline{H}\circ\theta$. 
  	Consider the subset of $\jet(N\times J,M)$ given by
  	$$
  	\pl:=\{\,((p,t),q,\lambda)\,|\,q\in L,\,\,\lambda(T_{(p,t)}(N\times\{t\}))\subseteq T_{q}L\,\}.
  	$$ 
  	
  	\begin{observation}
  		\label{o: subspace of jets}
  		Let $((p_{0},t_{0}),q_{0})\in(N\times J)\times M$, $\phi=\pmb{x}:U\to\R^{m-1}$ be a chart around $p_{0}$ and $\psi=\pmb{y}:V\to\R^{m}$ be a chart around $q_{0}$ such that 
  		$$\psi(V\cap L)=\psi(V)\cap\{y_{m}=0\}.$$
  		
  		We write $\Psi(\phi\times Id,\psi)$ for the chart on 
  		$\pi_{\jet}^{-1}(U\times J\times V)$ with coordinates:
  		$$(x_{1},\dots x_{m-1},t,y_{1},\dots y_{m},\zeta_{i,j})_{1\leq i,j\leq m},$$
  		as described in \ref{f: jet coordinates}, where $t$ plays the role of $x_{m}$ in the definition of $\zeta_{m,j}$.
  		 
  		Then with respect to this chart the set $\pl\cap\pi_{\jet}^{-1}(U\times J\times V)$ is described by the linear equations $$y_{m}=0, \quad\quad \zeta_{i,m}=0, \quad\quad 1\leq i\leq m-1.$$
  		 In particular, $\pl$ is a submanifold of $\jet(N\times J,M)$ of codimension $m$.
  	\end{observation}

  	By Corollary \ref{c: refined transversality} and the exhaustion of smooth manifolds by codimension $0$ submanifolds, for any neighbourhood $\mathcal{W}$ of $H$ in the Whitney topology there exists some $H'\in\mathcal{V}$ such that $j^{1}H'\transv \pl$ and $H$ and $H'$ agree outside of $W\times J$. The set $\mathcal{F}:=(j^{1}H')^{-1}(\pl)$ is a codimension $m$ submanifold of $N\times J$, i.e., a discrete set of points in $N\times J$.
  	
     Let us now examine the meaning of the transversality condition at a point 
     $(p_{0},t_{0})\in \mathcal{F}$. Write $q_{0}:=H'(p_{0},t_{0})$ and 
     $r_{0}:=j^{1}H'(p_{0},t_{0})$.
     Let
     $\phi=\mathbf{\underline{x}}: U\to\R^{m-1}$
     and $\psi=\pmb{y}:V\to\R^{m}$ be as in Observation \ref{o: subspace of jets},
     and assume that $\psi(q_{0})=\underline{0}$, $\phi(p_{0})=\underline{0}$.\footnote{Notice that the meaning of $\underline{0}$ depends on the ambient space.} Let $J_{0}$ be some interval around $t_{0}$ such that $H'(U\times J_{0})\subseteq V$ and write  
     $$F:=\psi\circ H'\circ(\phi^{-1}\times Id_{J_{0}}):\phi(U)\times J_{0}\to \psi(V).$$
     
     By Observation \ref{o: subspace of jets} and the expression for the differential of $h:=j^{1}H'$ provided in Fact \ref{f: local jets}, the condition $h\transv_{r}\pl$ at $(p_{0},t_{0})$ translates into the following two conditions on the $m$-th component $F^{m}$ of $F$ at $(\underline{0},t_{0})$:
     \begin{enumerate}
     	\item \label{first differential}$\frac{\partial F^{m}}{\partial t}\big|_{(\underline{0},t_{0})}\neq 0$, 
     	\item \label{second differential}the matrix $\left(\frac{\partial^{2}F^{m}}{\partial x_{i}\partial x_{j}}\big|_{(\underline{0},t_{0})}\right)_{\substack{1\leq i\leq m-1\\ 1\leq j\leq m-1}}$ is non-singular, by (\ref{first differential}) and the fact that $\frac{\partial F^{m}}{\partial_{x_{i}}}\loc{(\underline{0},t_{0})}=0$ for $1\leq i\leq m-1$.
     	%
     \end{enumerate}
    
     It follows from Lemma \ref{l: compatible redefinition} that $H'$ (or rather, its restriction to $N\times I$) is weakly $L$-compatible. The next Lemma takes care of the remaining details.
    \begin{lemma}
  		Up to replacing $H'$ by some $H''\in\mathcal{W}$ with which it agrees outside of some compact set contained in $W\times J$, we may assume that $\mathcal{F}\cap N\times\{-1,1\}=\emptyset$ and $\#(N\times\{t\}\cap\mathcal{F})\leq 1$  for any $t\in(-1,1)$. 
  	\end{lemma}
    \begin{subproof}
    	Given $(p,t)\in \mathcal{F}$, choose $m$-balls $D,D'$ in $M\times J$ with $(p,t)\in D\subseteq\mathring{D}'$ and $\{(p,t)\}=\mathcal{F}\cap D'$. By integrating a suitable vector field supported on $D'$ 
    	we can find a diffeotopy $G$ of $Id_{N\times J}$ supported on $D'$ with the property that 
    	$G_{s}$ acts as a translation 
    	$(p,t)\mapsto(p,t+s)$ on $D$. Since $G_{s}$ converges to the identity in the Whitney topology, as $s$ converges to $-1$,
    	it follows that $(j^{1}(H'\circ (G_{s},Id_{J})))^{-1}(\pl)\cap (D'\setminus\mathring{D})=\emptyset$ if $s+1$ is small enough.
    	Replacing $H'$ with $H'':=H'\circ (G_{s},Id_{J})$ preserves the properties already established for $H$, while modifying the time component of one point of $\mathcal{F}$ by a real value that can be chosen arbitrarily in some small interval around $0$. This can be also done in such a way that $H''$ belongs to any open neighbourhood $\W$ of $H$ in the Whitney topology. 
    \end{subproof}
    This concludes the proof of Lemma \ref{l: morse for isotopies}.   
   \end{proof}
   
   \newcommand{\nml}[0]{(N,M,L)} 
   We can combine Lemma \ref{l: morse for isotopies} above and Corollary \ref{c: main corollary} to obtain the following result.   
   \begin{corollary}
   	\label{c: Morse} Under the assumptions of Lemma \ref{l: morse for isotopies}, suppose that 
   	$H$ is of the form $\iota^{*}(G)$, where $\iota=H_{-1}$ is some proper embedding of $N$ in $M$
   	such that $\iota(N)\cap L=\emptyset$ and $G$ is a compactly supported diffeotopy $G$ of $Id_{M}$. Then, for any $\epsilon>0$ there is $h\in\w_{\epsilon}$ and some compactly supported diffeotopy $G'$ from $Id_{M}$ to $hg$ such that 
   	$\iota^{*}(G')$ is $L$-compatible.  
   \end{corollary}
 
   \section{More on compatible isotopies} 
  \label{s: isotopy operations}

  \begin{definition}
  	\label{d:op} Given an isotopy $H$ of $N$ in $M$, we write $H^{op}:=(H_{-t})_{t\in I}$. Given a diffeotopy $G$ of $Id_{M}$, we also write $G^{inv}:=(G_{-t}^{-1}G_{1})_{t\in I}$. 
  \end{definition}
  
  \begin{observation}
  	\label{o: niceness reversal}
  	If $H$ is an isotopy of $N$ in $M$ of the form $(G_{t}\circ H_{-1})_{t\in I}$, then $H^{op}$ is an
  	isotopy of the form $(G^{inv}_{t}\circ H_{1})_{t\in I}$. If $(N,M,L)$ is a nice $m$-triple and $H$ an $L$-compatible isotopy of 
  	$N$ in $M$, then the isotopy $H^{op}$, is itself $L$-compatible, and every tangency point of dimension $d$ of $H$ corresponds to a tangency point of dimension $m-d-1$ of $H^{op}$.
  \end{observation}
   
  \begin{definition}
  	Let $(N,M,L)$ be a nice $m$-triple and $H$ an $L$-compatible isotopy of $N$ in $M$. We say that $H$ is \emph{non-destructive} if all the tangency points of $M$ have dimension at most $m-2$. We say that $H$ is \emph{purely destructive} if all the tangency points of $M$ have dimension $m-1$. 
  \end{definition}

      \begin{figure}[t]
   	\includegraphics[width=0.45\textwidth]{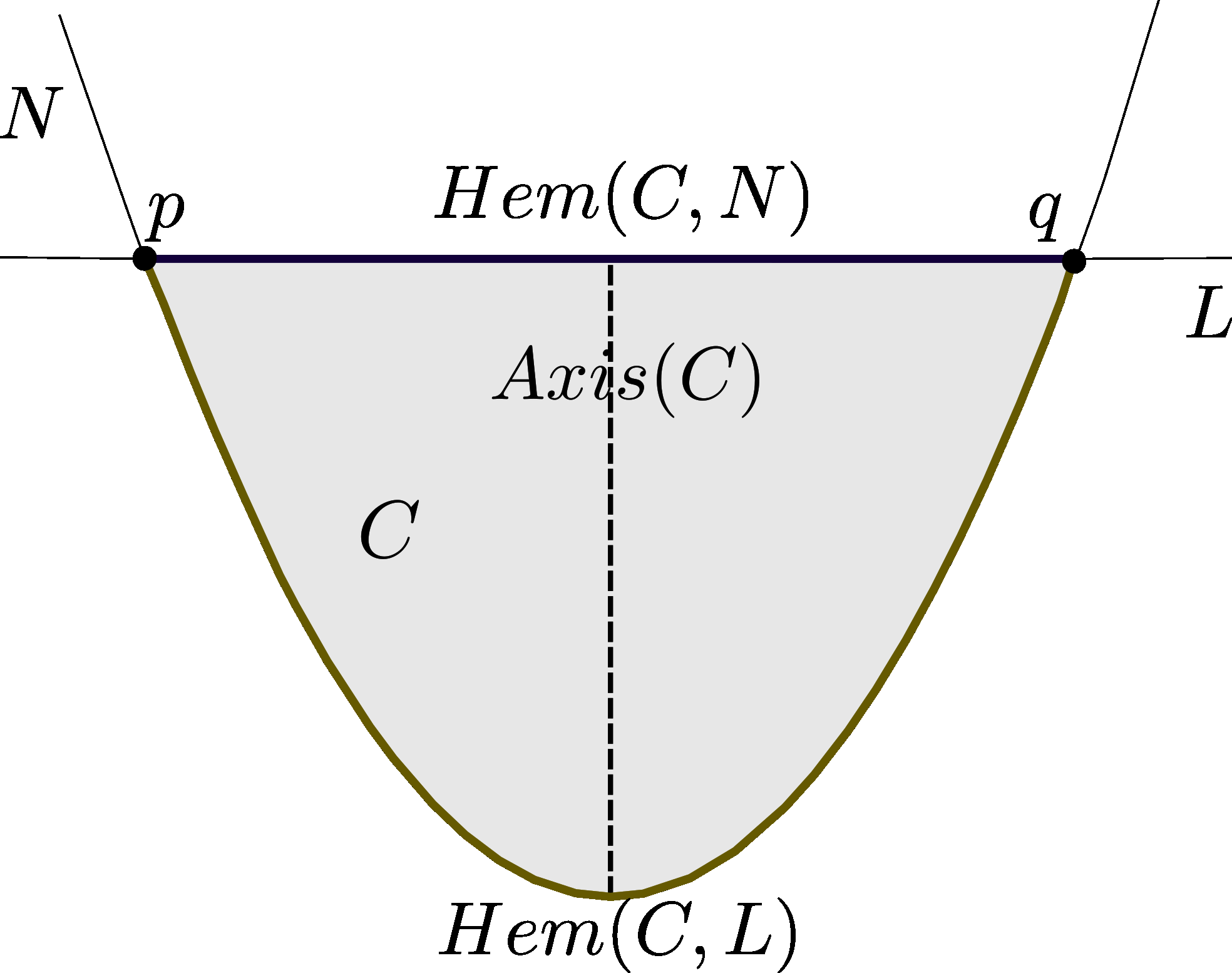}
   	\caption{	\label{fig:core} A $1$-bridge $C$, with the axis represented by a dashed line. In this case the equator consists only of two points $p,q$. The label $Hem(C,Z)$ indicates the $Z$-hemisphere of $C$. }
   \end{figure} 
   \begin{definition}
   	\label{d: core}Let $L,N\subseteq M$ be codimension $1$ submanifolds in general position. 
   	By a $d$-bridge $C$ between $L$ and $N$ we mean a set of the form 
   	$$
   	\iota(\{(\underline{x},y)\in B^{d}\times(-1,1)\,|\,\norm{\underline{x}}-\frac{1}{2}\leq y\leq 0\}),
   	$$
   	where $\iota:\overline{B}^{d}\times (-1,1)\cong C\subseteq M$ is a smooth embedding transverse to $N$ and $L$ 
   	such that 
   	$\iota^{-1}(L)=\overline{B}^{d}\times\{0\}$ and $\iota^{-1}(N)$ is the graph of the function 
   	$\underline{x}\mapsto \norm{\underline{x}}^{2}-\frac{1}{2}$. 
   	
   	In the situation above $C\cap N$ and $C\cap L$ are embedded closed $d$-balls in $N$ and $L$ respectively 
   	which we will refer to as the $N$-hemisphere and the $L$-hemisphere of $C$.
   	We refer to $\mathring{C}:=C\setminus(L\cup N)$ as the interior of $C$ and to 
   	$C\cap\iota(\underline{0}\times(-1,1))$ as the axis of $C$. The intersection of the two hemispheres is a $(d-1)$-embedded sphere in $N\cap L$, which we call the equator of $C$. The situation is depicted in Figure \ref{fig:core}. 	
   \end{definition}
   \begin{remark}
   	Note that a $d$-bridge has dimension $d+1$. A $0$-bridge is just a smooth arc intersecting each of $L$ and $N$ at a different endpoint. 
   \end{remark}

  \begin{lemma}
  	 \label{l: non-destructive isotopies} Let $(L,M,N)$ be a nice $m$-triple, $\iota:N\to M$ an embedding and $G$ a compactly supported diffeotopy such that $\iota^{*}(G)$ is $L$-compatible, with a single tangency point $(p_{0},t_{0})$ of dimension $d$. Then for any $-1\leq t^{\star}<t_{0}$ sufficiently close to $t_{0}$ there is a bridge $D\subseteq M$ between $N_{\star}:=H_{t^{\star}}(N)$ and $L$ such that for any $\eta>0$ we have the following.
  	 \begin{enumerate}[(i)]
  	 	\item \label{tubular}A (trivial) tubular neighbourhood $(E,\xi^{\eta})$ of $\mathring{D}_{\eta}$ with $im(\xi^{\eta})\subseteq\nn_{\eta}(D)$.
  	 	\item \label{diffeotopy}A diffeotopy $\chi^{\eta}$ of $Id_{M}$ supported on 
  	 	\begin{itemize}
  	 		\item $\nn_{\eta}(D)$ in the general case,
  	 		\item $im(\xi^{\eta})\cup\nn_{\eta}(D\cap(N_{\star}\cup L))\subseteq\nn_{\eta}(D)$ if $d=0$, 
  	 	\end{itemize}
  	 	 such that $\xi^{\eta}$ and the isotopy $\hat{H}:=(\chi^{\eta}_{t}\circ H_{t^{\star}})_{t\in I}$ are compatible, in the following sense:
  	 \begin{enumerate}[(a)]
  	 	   \item \label{zero property} $H_{t}(supp(H))\subseteq im(\xi^{\eta})\cup\nn_{\eta}(D\cap L)$ for all $t\in(-1,1)$,
  	 		 \item \label{first property}$\hat{H}$ is $L$-compatible, with a single tangency point of dimension $d$, and the same is true if instead of $L$ we take any of the $\xi^{\eta}$-fiber images through a point in $\mathring{D}$,
         \item \label{second property}for any $t\in I$ and $p\in N$
               the differential $D\hat{H}_{t}\loc{p}$ is tangent to the fiber image through $\hat{H}_{t}(p)$ only if $\hat{H}_{t}(p)$ is in the axis of $D$.
      \end{enumerate} 
 	 		\item \label{connecting isotopy}Some compactly supported diffeotopy $\tilde{G}^{\eta}$ from $\chi^{\eta}_{1} G_{t^{\star}}$ to $G_{1}$ such that $\iota^{*}(\tilde{G}^{\eta})$ is $L$-clean.
    \end{enumerate}  	 
  \end{lemma}
   \begin{proof}
   	Write $H$ for $\iota^{*}(G)$ and $(p_{0},t_{0})$ for the unique tangency point of $H$.
   	Up to time reparametrization of $G$, we can find $\delta=\delta_{0}>0$ and charts $\phi:U\to B^{m-1}$ and $\psi:V\to B^{m-1}\times(-1-\delta,1+\delta)$ so that
   	$$N\times(t_{0}-\delta,t_{0}+\delta)\cap H^{-1}(H(U\times(t_{0}-\delta,t_{0}+\delta)))=U\times(t_{0}-\delta,t_{0}+\delta)$$
   	and the local expression for 
   	$H_{t}$ on $U\times(t_{0}-\delta,t_{0}+\delta)$ with respect to $\phi$ and $\psi$ is of the form \begin{equation}
   		\label{eq:def f}\tag{$\flat$}x\mapsto (\underline{x},f_{t}(\underline{x})):=(\underline{x},t-t_{0}+\sum_{i=1}^{d}x_{i}^{2}-\sum_{i=d+1}^{m-1}x_{i}^{2}).
   	\end{equation}
   	 
   	Given $\underline{x}\in B^{m-1}$, write $x^{+}:=(x_{i})_{i=1}^{d}$, $x^{-}:=(x_{i})_{i=d+1}^{m-1}$.
   	
   	\begin{observation}
	   	\label{o: disjointness} By taking $U$, $V$ and $\delta$ to be sufficiently small, we may further assume the existence of an open set $V'$ with $V\subseteq V'\subseteq M$ and $\epsilon\in (0,\frac{1-t_{0}-\delta}{2})$ such that for all $t',t''\in(t_{0}-\delta,t_{0}+\delta)$ we have: 
	   	\begin{enumerate}
	   		\item  $G_{t'}G_{t''}^{-1}(V)\subseteq V'$, 
	   		\item $G_{t'+2\epsilon}G_{t''}^{-1}(\overline{V'})\cap L=\emptyset$.
	   	\end{enumerate}
   	\end{observation}
   	
    Fix any $\delta'\in(0,\min\{\delta,\frac{1}{4}\})$ and let $t^{\star}:=t_{0}-\delta'$, $\epsilon:=\sqrt{\delta'}$ .  	
   	Let $\rho:\R\mapsto\R$ be a smooth function such that: 
   	\begin{itemize}
   		\item $\rho$ is symmetric,
   		\item $\rho$ is supported on $I$,
   		\item $\rho'(0)<0$ on $(0,1)$,
   		\item and for some $C<0$ the function $\rho$ is of the form $g(s)=1+Cs^{2}$ in some neighbourhood of $0$.  
   	\end{itemize}
    	
   	For $0<\mu\leq 1$ write $\rho_{\mu}(s):=\rho(\frac{2}{\mu} s)$. 
   	Choose also some smooth function $\sigma_{\alpha}:\R_{\geq 0}\to\R_{\geq 0}$ supported on $[0,\epsilon+\frac{\alpha}{2}]$ such that the function $\tau_{\alpha}: (-1,1)\to\R$ given by $\tau_{\alpha}(s):=s^{2}-\delta'+\sigma(s^{2})$
   	satisfies the following:
   	\begin{itemize}
   	\item $\tau_{\alpha}''(s)>0$ for all $s\in(-1,1)$,
   	\item $\tau_{\alpha}(0)=\frac{\epsilon\alpha}{2}$.
   	\end{itemize}
   	 This can be done, for instance, by postcomposing the function $s\mapsto g(s):=s^{2}-\delta'$ with a suitable reparametrization of its codomain, taking the difference with $g$, and finally using that any symmetric smooth real function can be written as $h(s^{2})$ for some smooth function $h$ (see \cite{golubitsky2012stable}, p.108).  
   	
   	Given $0<\alpha<1$, we define $\theta_{\alpha}:B^{m-1}\to\R$ by the formula 
   	$$
   	\theta_{\alpha}(\underline{x}):=\norm{\underline{x}^{+}}^{2}-\norm{\underline{x}^{-}}^{2}+\rho_{\alpha}(\norm{\underline{x}^{-}})\, \sigma_{\alpha}(\norm{\underline{x}^{+}}^{2})-\delta'. 
   	$$
   	\newcommand{\bxx}[0]{B^{d}(\epsilon+\frac{\alpha}{2})\times B^{m-d-1}(\frac{\alpha}{2})}
   	Clearly $\theta_{\alpha}$ and $x\mapsto\norm{\underline{x}^{+}}-\norm{\underline{x}^{-}}-\delta'$ agree on 
   	$B^{m-1}\setminus\bxx$.
   	The observation below follows from the properties of $\rho_{\alpha}$ and $\sigma_{\alpha}$ by a simple calculation.
   
   	\begin{observation}
   	\label{o: derivatives}The derivative $\frac{\partial\theta_{\alpha}}{\partial x_{i}}(\underline{x})$ equals $0$ if and only if $x_{i}=0$ and has sign 
   	$\epsilon_{i}\frac{x_{i}}{|x_{i}|}$ otherwise, where $\epsilon_{i}=1$ if $i\leq d$ and $-1$ if $i>d$.
   	We also have 
   	$$\frac{\partial^{2}\theta_{\alpha}}{\partial x_{i}\partial x_{j}}(\underline{0})=
   	\begin{cases*}
   	 0 & for $i\neq j$\\
   	 1 & for $i=j\leq d$\\
   	-1 & $i=j>d$
   	\end{cases*}
   	$$
   	\end{observation}
   	\newcommand{\bx}[0]{B^{m-1}\times(1-\delta,1+\delta)}
   
   	Consider the sets  
   	\begin{align*}
   		D :=& \psi^{-1}(\{(\,\underline{x},y_{m})\in\bx\,| \norm{\underline{x}}-\delta'\leq y_{m}\leq 0,\,\underline{x}^{-}=\underline{0}\}),\\
   		 E^{\alpha} :=& \psi^{-1}(\{(\,\underline{x},y_{m})\in\bx\,| \norm{\underline{x}}-\delta'\leq y_{m}\leq 0,\,\norm{\underline{x}^{-}}\leq\alpha\}).
   	\end{align*}
   	Clearly, $D$ is a $d$-bridge between $N_{\star}$ and $L$. 
   	Let
   	\begin{align*}
   	 &\tilde{\xi}^{\alpha}:D \times B^{m-d-1}\to E^{\alpha},\quad\quad \tilde{\xi}^{\alpha}(p,v):=\psi^{-1}(\psi(p)^{+},\alpha v),\\   	
   		U_{\alpha}:=\{(\,\underline{x}&,y_{m})\in B^{m-1}\times\R\,\,|\,\, \norm{\underline{x}}-\delta'-\alpha\leq y_{m}\leq \alpha,\,\norm{\underline{x}}^{+}\leq\epsilon+\alpha,\, \norm{\underline{x}^{-}}\leq\alpha\},
   	\end{align*} 
   so that $ E^{\alpha}\subseteq U_{\alpha}$. 
   
   Consider also the isotopies 
    $\tilde{\chi}^{\alpha}, \hat{\Theta}^{\alpha}$ of $B^{m-1}$ in $B^{m-1}\times(-1-\delta,1+\delta)$ given by
   		\begin{align*}
	   		\tilde{\chi}^{\alpha}(\underline{x},t):=& (\underline{x},\frac{1+t}{2}\theta_{\alpha}(\underline{x})+\frac{1-t}{2}f_{t^{\star}}(\underline{x})), \\
	   		\hat{\Theta}^{\alpha}(\underline{x},t):= &(\underline{x},\frac{1+t}{2}f_{t_{0}+\delta'}+\frac{1-t}{2}\theta_{\alpha}(\underline{x})).
	   	\end{align*}	   
	  Let also $L_{\lambda}$ be the submanifold of $B^{m-1}\times\R$ given by the equation $y_{m}=\lambda$. We note the following properties.
   	\begin{lemma}
   		 \label{l: Gamma} The following conditions hold:  		 
   		  \begin{enumerate}[(I)]
   		  	 \item \label{final stage} For all $\underline{x}\in B^{m-1}$ $$\hat{\chi}^{\alpha}_{1}(\underline{x},f_{t^{\star}}(\underline{x}))=(\underline{x},\theta_{\alpha}(\underline{x}))=\hat{\Theta}^{\alpha}_{-1}(\underline{x}).$$
   		  	 \item \label{cuatro}For $\underline{x}\in B^{m-1}\setminus B^{d}(\epsilon+\alpha)\times B^{m-d-1}(\alpha)$ one has 
            $$\hat{\Theta}^{\alpha}(\underline{x},t):=(\underline{x},f_{t^{\star}+(t+1)\delta'}(\underline{x}))$$ 
            \item \label{constant}The isotopy $\tilde{\chi}^{\alpha}_{t}$ is constant outside of $B^{d}(\epsilon+\frac{\alpha}{2})\times B^{m-d-1}(\frac{\alpha}{2})$.
          	\item \label{tangency condition}For  $\lambda\in(-\delta',\frac{\epsilon\alpha}{2})$ the isotopy $\hat{\chi}^{\alpha}$ is $L_{\lambda}$-compatible, with a unique tangency point of dimension $d$ of the form $(\underline{0},t)$.
           \item \label{tres} The isotopy $\hat{\Theta}^{\alpha}$ is $L_{0}$-clean. 
   		  	\item \label{covering}There is a diffeotopy $\hat{\chi}^{\alpha}$ of  $Id_{B^{m-1}\times(-1-\delta,1+\delta)}$ covering $\tilde{\chi}^{\alpha}$ and supported on $U_{\alpha}$. 
   		  	\end{enumerate}    
   	\end{lemma}

   	\begin{subproof} 
   		Items (\ref{final stage})-(\ref{constant}) are clear from the definitions of $\hat{\Theta}^{\alpha}$, $\hat{\chi}^{\alpha}$ and the properties of $\theta_{\alpha}$. 
   		By Observation \ref{o: derivatives}, for any $1\leq i\leq m-1$ and $t\in(t-\delta,t+\delta)$ the partial derivatives
   		$\frac{\partial f_{t}}{\partial x_{i}}$ and $\frac{\partial \theta_{\alpha}}{\partial x_{i}}$ have the same set of zeroes and 
   		the same sign outside of it. Similarly, for $1\leq i, j\leq d-1$ we have that $\frac{\partial^{2}f_{t}}{\partial x_{i}\partial x_{j}}(\underline{0})$ is zero, positive or negative precisely when 
   		$\frac{\partial^{2}\theta_{\alpha}}{\partial x_{i}\partial x_{j}}(\underline{0})$ is. 
   		Clearly, the same applies to any two convex combinations of $f_{t}$ and $\theta_{\alpha}$. In particular, $\underline{0}$ is the only critical point of any such combination. 
   		It follows that $\tilde{\chi}^{\alpha}$ is $L_{\lambda}$-compatible, with a unique tangency point of dimension $d$ for every $\lambda\in(-\delta',\frac{\epsilon\alpha}{2})$, by Lemma \ref{l: compatible redefinition}. That $\hat{\Theta}^{\alpha}$ is $L_{0}$-clean follows from the same argument as above (in this case the critical value remains positive at all points in time). This settles (\ref{tangency condition}) and (\ref{tres}). Lemma \ref{l: isotopy extension}, together with (\ref{constant}) implies that $\tilde{\chi}^{\alpha}$ is covered a diffeotopy $\hat{\chi}^{\alpha}$ of $Id_{B^{m-1}\times(-1-\delta,1+\delta)}$ supported in $U_{\alpha}$, so (\ref{covering}) holds as well.
   	\end{subproof}

   	Clearly, for $\alpha$ small enough we have $U_{\alpha}\subseteq\nn_{\eta}(D)$ and    
     \begin{align*}
   	 \psi^{-1}(B^{d}(\epsilon+\frac{\alpha}{2})\times B^{m-d-1}(\alpha)\times[0,\alpha])\subseteq\nn_{\eta}(D\cap L).
     \end{align*}
    If we then let $\xi^{\eta}:=\tilde{\xi}^{\alpha}$ and take $\chi^{\eta}$ to be the extension by the identity of the pull-back of $\hat{\chi}^{\alpha}$ by $\psi$, then it follows from the lemma above that 
    $\xi^{\eta}$ and $\chi^{\eta}$ satisfy properties (\ref{tubular}) and (\ref{diffeotopy}) of the statement of \ref{l: non-destructive isotopies}.
   	
    The existence of $\tilde{G}^{\eta}$ satisfying (\ref{connecting isotopy}) is somewhat more delicate. Write $\chi$ for $\chi^{\eta}$ and $\hat{\Theta}$ for $\hat{\Theta}^{\alpha}$. If we define $\Theta: N\times I\to M$ by
   	$$
   	 \Theta(p,s) := 
   	 \begin{cases*}
   	   \psi^{-1}(\hat{\Theta}(\phi(p)),s) & for $p\in U,\, s\in I$ \\
   	   H(p,t^{\star}+(s+1)\delta')  & for $p\notin U,\,s\in I$,   	    
   	 \end{cases*}
   	$$
   	then it follows from (\ref{eq:def f}) and from (\ref{cuatro}) in Lemma \ref{l: Gamma} that $\Gamma$ is a compactly supported isotopy of $N$ in $M$, which is $L$-clean, by (\ref{tres}).
   	
   	 Consider the isotopy $\Xi$ of $N$ in $M$ given by
     \begin{equation}
     	\label{eq:xi}\tag{$\flat\flat$} \Xi_{s}:=G_{t^{\star}}G_{t^{\star}+(1+s)\delta'}^{-1}\circ\Theta_{s}.
     \end{equation}   

      For all $s\in I$ and $p\in N\setminus U$ the definition of $\Theta$ implies:
       $$
         \Xi(p,s):=G_{t^{\star}}G_{t^{\star}+(s+1)\delta'}^{-1} G_{t^{\star}+(s+1)\delta'}\circ\iota(p)=G_{t^{\star}}\circ\iota(p).
       $$
       Since $\Theta_{s}(U)\subseteq V$ for all $s\in I$, Observation \ref{o: disjointness} implies that $\Xi_{s}(U)\subseteq V'$. By Lemma \ref{l: isotopy extension} there is a diffeotopy $\Omega$ of $Id_{M}$ supported on 
       $V'$ with the property that $\Xi_{s}=\Omega_{s}\circ\Xi_{-1}$ for all $s\in I$.
       Combined with (\ref{eq:xi}) and the equality
	     \begin{align*}
	      	  \label{ximinus}\Xi_{-1}&=G_{t^{\star}}G_{t^{\star}}^{-1}\circ \Theta_{-1}=\Theta_{-1}=\chi_{1} G_{t^{\star}}\circ\iota,
	      \end{align*}
      this implies that $\Theta=\iota^{*}(\Upsilon^{1})$, where  
      $$     
      \Upsilon^{1}:=(G_{t^{\star}+(s+1)\delta'}G_{t^{\star}}^{-1}\Omega_{s}\chi_{1}G_{t^{\star}})_{s\in I}.
      $$
%
      Consider, furthermore, the following diffeotopies:  
      \begin{align*}
      	\Upsilon^{2}:&= (G_{t_{\star}+2\delta'+\epsilon(s+1)}G_{t^{\star}}^{-1}\Omega_{1}\chi_{1}G_{t^{\star}})_{s\in I},\\  	
      	\Upsilon^{3}:&= (G_{t_{\star}+2(\delta'+\epsilon)}G_{t^{\star}}^{-1}\Omega_{-s}\chi_{-s}G_{t^{\star}})_{s\in I}.
      \end{align*}
     Notice that  
     \begin{align*}
      \Upsilon^{1} _{-1}=\chi_{1}G_{t^{\star}},\quad\quad \Upsilon^{1}_{1}=G_{t_{\star}+2\delta'}G_{t^{\star}}^{-1}\Omega_{1}\chi_{1}G_{t^{\star}}=\Upsilon^{2}_{-1},
      \\\Upsilon^{2} _{1}=G_{t_{\star}+2(\delta'+\epsilon)}G_{t^{\star}}^{-1}\Omega_{1}\chi_{1}G_{t^{\star}}=\Upsilon^{3}_{-1},\quad\quad\Upsilon^{3}_{1}:=G_{t_{\star}+2(\delta'+\epsilon)},
     \end{align*}
     so that $$\tilde{G}^{\eta}:=\Upsilon^{1}*\Upsilon^{2}*\Upsilon^{3}*G_{\restriction M\times(t_{\star}+2(\delta'+\epsilon),1]}$$
      is a diffeotopy from 
     $\chi_{1}G_{t^{\star}}$ to $G_{1}$. 
     
     Recall that $\iota^{*}(\Upsilon^{1})=\Theta$ is $L$-clean, by construction.
     Likewise, the isotopy
       \begin{align*}
       	\iota^{*}(\Upsilon^{2})=&(G_{t_{\star}+2\delta'+\epsilon(s+1)}G_{t_{\star}+2\delta'}^{-1}\Theta_{1})_{s\in I}=\\
       	&(G_{t_{\star}+2\delta'+\epsilon(s+1)}G_{t_{\star}+2\delta'}^{-1}H_{t_{\star}+2\delta'})_{s\in I}=(H_{t_{\star}+2\delta'+\epsilon(s+1)})_{s\in I},
       \end{align*}
     is $L$-clean. Finally, for all $s\in I$ the diffeomorphism $\Omega_{s}\chi_{s}$ is supported on $V'$, from which it follows that
     $\Upsilon^{3}$ is constant on some neighbourhood of $(\Upsilon_{-1}^{3})^{-1}(L)$. 
     Since $\chi_{1}G_{t^{\star}}\circ\iota\transv L$, it follows from Observation \ref{o: compatibility and strictness} that the isotopy $\iota^{*}(\Upsilon^{3})$ is also $L$-clean. 
     So $\iota^{*}(\tilde{G}^{\eta})$ is also $L$-clean, as needed.
   \end{proof}
   
     We record the following simple algebraic consequence of Lemma \ref{l: non-destructive isotopies} for later use.
  \begin{corollary}
  	 \label{c: non-destructive isotopies} Let $(N,M,L)$ be a nice $m$-triple, $G$ a compactly supported diffeotopy from $Id_{M}$ to some element $g\in \D_{c0}(M)$, and $\iota:N\to M$ a proper embedding such that
  	  $\iota^{*}(G)$ is an $L$-compatible non-destructive isotopy of $N$ in $M$ with $r$ tangency points. Let also $\mathscr{X}\subseteq\D_{c0}(M)$ be a set such that for every
  	 embedded $k$ ball $D$ in $M$, $k\leq m-1$ we have $\cmp{\eta}{D}\subseteq\mathscr{X}$ for some $\eta>0$.
  	 Then there are  
  	 $h_{0},\dots h_{r-1}\in\mathscr{X}$ as well as elements $g_{L}\in \D_{c0}^{\{L\}}(M)$, $g_{N}\in \D_{c0}^{\{\iota(N)\}}(M)$ such that $g=g_{L}h_{r-1}\cdots h_{0}g_{N}$.        
  \end{corollary}
  \begin{proof}
	  We prove the statement by induction on $r$, where $r=0$ is just Lemma \ref{l: stability application}. Let $t_{0},\dots t_{r-1}$ be the tangency times of $\iota^{*}(G)$. For any $u\in\D(M)$ it is clear that the set $\mathscr{X}^{u^{-1}}$ also satisfies the property of the statement.
	  As a result, by applying Lemma \ref{l: non-destructive isotopies} to the restriction of $G$ to a suitable final subinterval of $I$, we obtain $t^{\star}\in(t_{r-2},t_{r-1}]$ such that for any given $u\in D(M)$ we can modify $G$ on $[t^{\star},1]$ so that for some $t'\in(t^{\star},1)$ we have:
	  \begin{itemize}
	  	\item \label{condi}$G_{1}=\tilde{g}hG_{t^{\star}}$, where $h\in\mathscr{X}^{u^{-1}}$ ($\chi_{1}$ in Lemma \ref{l: non-destructive isotopies}),
	  	\item \label{condii}$\tilde{g}$ is of the form $\Theta_{1}$, for some compactly supported diffeotopy of $Id_{M}$ such that
	  	$\Theta_{s}hG_{t^{\star}}\circ\iota\transv L$ for every $s\in I$.
	  \end{itemize}   
    It follows from the second condition above that one can write
    $\tilde{g}=g'_{L}g'_{N}$, where, letting $N':=hG_{t^{\star}}\circ\iota(N)$, we have
    $g'_{L}\in\D_{c0}^{\{L\}}(M)$ and $g'_{N}\in\D_{c0}^{\{N'\}}(M)$. 
	 
    The induction hypothesis allows us to write $G_{t^{\star}}=g''_{L}h_{r-2}\cdots h_{0}g''_{N}$, where 
	  $$g''_{L}\in\D_{c0}^{\{L\}}(M), \quad\quad g''_{N}\in\D_{c0}^{\{\iota(N)\}}(M),\quad\quad h_{i}\in\mathscr{X},\,\,0\leq i\leq r-2.$$
	  We choose a decomposition $g=\tilde{g}hG_{t^{\star}}=g'_{L}g'_{N}hG_{t^{\star}}$, as described in the first paragraph for the choice $u:=g''_{L}$. Then
	    \begin{align*}
	    	g=g'_{L}hG_{t^{\star}}(g''_{N})^{hG_{t^{\star}}}=&g'_{L}hg''_{L}h_{r-2}\cdots h_{0}g''_{N}(g'_{N})^{hG_{t^{\star}}}=\\
	    	&g'_{L}g''_{L}h^{g''_{L}}h_{r-2}\cdots h_{0}g''_{N}(g'_{N})^{hG_{t^{\star}}}.
	    \end{align*}
	  Since $h_{r-1}:=h^{g''_{L}}\in\mathscr{X}$ and $(g'_{N})^{hG_{t^{\star}}}\in\D_{c0}^{\{\iota(N)\}}(M)$, the result follows.  	  
  \end{proof}
   
    The following is a crucial observation.  
   \begin{observation}
   	\label{o: conjugation}Let $(N,M,L)$ be a nice $m$-triple and $\iota: N\to M$ a proper embedding. Suppose that we are given a compactly supported diffeotopy $G$ of $Id_{M}$ such that the isotopy $\iota ^{*}(H)$ of $N$ in $M$ is $L$-compatible and has type $\sigma$. Let also $G'$ be a compactly supported diffeotopy $Id_{M}$ preserving $L$ and $\iota(N)$ setwise at all times, and write $g:=G_{1}'$. Then the isotopy $H' :=(gG_{t}g^{-1}\circ\iota)_{t\in I}$ of $N$ in $M$ is also $L$-compatible and has the same type as $H$. Moreover, the diffeotopy $G'':=(G'_{-t}G_{1}(G'_{-t})^{-1})_{t\in I}$ from $gG_{t}g^{-1}$ to $G_{1}$ is such that $\iota^{*}(G'')$ is $L$-clean. 
   \end{observation}
   \begin{proof}
   	 Notice that $g^{-1}\circ\iota=\iota\circ\hat{g}^{-1}$ for some $\hat{g}\in\D_{c0}(N)$. 
   	 If $(p,t)\in N\times I$, clearly $gG_{t}g^{-1}\circ\iota(p)=gG_{t}\circ\iota\circ \hat{g}^{-1}(p)$ is in $L$ if and only if $G_{t}\circ\iota(p)\in L=g^{-1}(L)$. For any $(p,t)\in N\times (-1,1)$ for which the latter holds we also have $im(D(G_{t}\circ\iota)\loc{p})=T_{H_{t}(p)}L$ if and only if $im(DH'_{t}\loc{\hat{g}^{-1}(p)})=T_{H'_{t}(\hat{g}^{-1}(p))}L$ and precomposing the charts $\phi$ and $\psi$ in Definition \ref{d: compatible isotopy} with $\hat{g}$ and $g^{-1}$ respectively yields the required local expression. The final assertion regarding $\iota(G'')$ follows from a similar argument.
   \end{proof}

   \begin{lemma}
   	\label{l: commutation move} Let $(N,M,L)$ be a nice $m$-triple, $\iota$ a proper embedding of 
   	$N$ in $M$ and $G$ a compactly supported diffeotopy of $Id_{M}$ such that
   	$H:=\iota^{*}(G)$ is $L$-compatible with type $\sigma$. Assume that $\sigma$ contains a subword $\sigma_{0}$ of one of the following two forms:
   	\begin{enumerate}
   		[(a)]
   		\item \label{big step} $(d,d')$ where $d\geq d'+2$,
   		\item \label{small step}$(m-1,\dots m-1,m-2)$.
   	\end{enumerate}   	
     Then there is a compactly supported diffeotopy $G'$ of $Id_{M}$ such that
   	\begin{enumerate}[(i)]
   		\item $G_{1}=G'_{1}$,
   		\item and the type of $\iota^{*}(G)$ is the result of replacing in $\sigma$ the subword $\sigma_{0}$ by:
       \begin{itemize}
       	\item $(d',d)$ in case (\ref{big step}),
       	\item some word of the form $(d'_{1},\dots d'_{q},m-2,m-1,\dots m-1)$, where $d'_{l}\in\{0,m-2\}$ for all $1\leq l\leq q$
       	in case (\ref{small step}) (see Figure \ref{angelus} below).
       \end{itemize}   		
   	\end{enumerate}  
   \end{lemma}
   \begin{proof}
   	By using Fact \ref{f: concatenation} and Observation \ref{o: compatibility and strictness}, one may assume $\sigma=\sigma_{0}$. Write $\sigma:=(d_{-r},\dots d_{0},d_{1})$ and denote by $(t_{-r},\dots t_{0},t_{1})$ the corresponding increasing sequence of tangency times.	Lemma \ref{l: non-destructive isotopies} implies the existence of some $t^{\star}_{1}\in (t_{0},t_{1})$ and some  $d_{1}$-bridge $D_{1}$ between $H_{t^{\star}_{1}}$ and $L$ such that for all $\eta>0$ we may modify the restriction of $G$ to $N\times(t^{\star}_{1},1)$, without changing the type of $H$, in such a way that, keeping the notation $t_{1}$ for the new tangency time, for some $s_{1}>t_{1}>t^{\star}_{1}$, the restriction of $H$ to $G\times[t^{\star}_{1},s_{1}]$ is of the form 
   	$(\chi^{1}_{t}\circ G_{t^{\star}_{1}})_{t\in I}$ for some diffeotopy $(\chi^{1}_{t})_{t\in I}$ of $Id_{M}$.
   	
   	    Observation \ref{o: niceness reversal} allows us to apply Lemma \ref{l: non-destructive isotopies} to $H^{op}$ and $G^{inv}$, which yields $t^{\star}_{0}\in(t_{0}.t^{\star}_{1})$ and some $(m-d_{0}-1)$-bridge $D_{0}$ in $M$ between $H_{t^{\star}_{0}}(N)$ and $L$, so that for all $\eta>0$ one can modify the restriction of $G$ to $N\times(-1,t^{\star}_{0})$, without altering the type of $\iota^{*}(G_{M\times\restriction[-1,t^{\star}_{0}]})$, so that, as a result, for some $s_{0}<t_{0}$ the restriction of $G$ to $N\times[s_{0},t^{\star}_{0}]$ is a time reparametrization of 
   	$(\chi^{0}_{-t}G_{t^{\star}_{0}})_{t\in I}$
   	for some diffeotopy $(\chi^{0}_{t})_{t\in I}$ of $Id_{M}$ supported on $\nn_{\eta}(D_{0})$. 
  
   	By Lemma \ref{l: stability application}, the restriction of $G$ to $N\times[t^{\star}_{0},t^{\star}_{1}]$ can be assumed to be of the form $(G_{t-t^{\star}_{0}-1}^{L}G_{t^{\star}_{0}}G^{N}_{t-t^{\star}_{0}-1})_{t\in I}$ for a compactly supported diffeotopy $G^{L}$ of $Id_{M}$ preserving $L$ and a compactly supported diffeotopy $G^{N}$ of $Id_{M}$ preserving $\iota(N)$.\footnote{Here a direct application of \ref{l: stability application} really shows that for $t\in[t^{\star}_{0},t^{\star}_{1}]$ we have $G_{t}G_{t_{0}}^{-1}=G^{L}_{t-t^{\star}_{0}}K^{N}_{t-t^{\star}_{0}}$ where the diffeotopy $K^{L}$ preserves $L$ and the diffeotopy $K^{N}$ preserves $G_{t^{\star}_{0}}(\iota(N))$.}  	
   	Let $g^{L}:=G^{L}_{t^{\star}_{1}-t^{\star}_{0}-1}$ and $g^{N}:=G^{N}_{t^{\star}_{1}-t^{\star}_{0}-1}$. We can replace the 
   	restriction of $G$ to $N\times[t^{\star}_{0},s_{1}]$ with the concatenation of the following diffeotopies:
   	
   	\begin{itemize}
   	   \item 	$((g^{L})^{-1}\chi^{1}_{t}G_{t^{\star}_{1}})_{t\in I}=((g^{L})^{-1}\chi^{1}_{t}g^{L}G_{t^{\star}_{0}}g^{N})_{t\in I}$,
   	which, by the same argument as in the proof of Observation \ref{o: conjugation}, pulls back by $\iota$ to an isotopy of $N$ in $M$ which is $L$-compatible, and has the same type as $(\chi^{1}_{t}\circ H_{t^{\star}_{1}})_{t\in I}$,
   	  \item $((G^{L}_{-t})^{-1}\chi^{1}_{1}G_{t^{\star}_{1}})_{t\in I}$, which 
   	  clearly pulls back by $\iota$ to an $L$-clean isotopy of $N$ in $M$.
   	\end{itemize} 
      Recall that here we are using Fact \ref{f: concatenation} and Observation \ref{o: compatibility and strictness} and we will continue to do so tacitly.
   	  Since we can choose $\chi^{1}$ so that $((g^{L})^{-1}\chi^{1}_{t}g^{L})_{t\in I}$ is supported on any given neighbourhood of the bridge $(g^{L})^{-1}(D_{1})$, we might as well assume $t^{\star}_{0}=t^{\star}_{1}=:t^{\star}$. 
   	  Write $g_{\star}:=G_{t^{\star}}$, $f_{\star}:=H_{t^{\star}}$, $N_{\star}:=f_{t^{\star}}$. 
   	\begin{lemma}
   		\label{l:disjoint hemispheres} Under the general hypotheses of \ref{l: commutation move}, we may assume that for $Z,W\in\{L,N_{\star}\}$ the $Z$-hemisphere of $D_{0}$ and the $W$-hemisphere of $D_{1}$ are disjoint. 
   	\end{lemma}
   	\begin{subproof}
    	Observation \ref{o: conjugation} allows us to replace $D_{0}$ (or, equivalently, $D_{1}$) with its image by any $h\in\D_{c0}^{\{L,N_{\star}\}}(M)$ (see \ref{d: manifoldstabilizer}), something we will use repeatedly below. Let $C_{i}$ be the equator of $D_{i}$ (recall Definition \ref{d: core}). Notice that $C_{0},C_{1}$ are embedded spheres of dimension 
   	$m-d_{0}-2$ and $d_{1}-1$, respectively, in the $(m-2)$-dimensional manifold $P:=L\cap N_{\star}$. Since $m-d_{0}+d_{1}-3<m-3$, it follows from Lemma \ref{l: wlog transverse} that there exists $h\in\D_{c0}(P)$ such that $h(C_{0})\cap C_{1}=\emptyset$. 
   	By the last assertion of Lemma \ref{l: isotopy extension}, the map $h$ extends to an element of $\D_{c0}^{\{N_{\star},L\}}(M)$. This allows us to assume $C_{0}\cap C_{1}=\emptyset$.  
   	
   	For $Z\in\{L,N_{\star}\}$ let now $E^{Z}_{i}$ be the $Z$-hemisphere of $E_{i}$.
   	The sum of the dimensions of $E^{Z}_{0}$ and $E^{Z}_{1}$ is $(m-d_{0}-1)+d_{1}<m-1$.
   	Since the boundaries of $C_{0}$ and $C_{1}$ are disjoint, by using the same argument as before, we can find $h_{Z}\in\D_{c0}^{P}(Z)$ such that 
   	$h_{Z}(E^{Z}_{0})\cap E^{Z}_{1}=\emptyset$ for $Z\in\{L,N_{\star}\}$, which then extends to some element $g_{Z}\in\D_{c0}^{\{Z\},Z'}(M)$, where $\{Z,Z'\}=\{N_{\star},L\}$. By replacing $D_{0}$ with $g_{L}g_{N_{\star}}(D_{0})$, we can assume that
   	$E^{Z}_{0}\cap E^{Z}_{1}=\emptyset$ for $Z\in\{L,N_{\star}\}$, concluding the proof of \ref{l:disjoint hemispheres}.
   	\end{subproof}
   	
   	\subsection*{Case (\ref{big step})}  
   	We work under the conclusions of Lemma \ref{l:disjoint hemispheres}. In this case the sum of the dimensions of $D_{0}$ and $D_{1}$ is less than $m$. Therefore, by Lemma \ref{l: wlog transverse}, it is possible to find 
    $h\in\D_{c0}^{L\cup N_{\star}}(M)$ such that $h(D_{1})\cap D_{0}=\emptyset$. By using Observation \ref{o: conjugation}, we may thus assume $D_{0}\cap D_{1}=\emptyset$. Since we can take $\chi^{l}$ to be supported in an arbitrary neighbourhood of $D_{l}$ for $l=0,1$ (the choice of $\chi^{0}$ and $\chi^{1}$ are entirely independent), we can assume
    $supp(\chi^{1})\cap supp(\chi^{0})=\emptyset$. It now suffices to take as $G'$ the concatenation of 
    $$G_{\restriction M\times[-1,s_{0}]},\quad 
    (\chi^{1}_{t}\chi^{0}_{1}G_{t^{\star}})_{t\in I}=(\chi^{0}_{1}\chi^{1}_{t}G_{t^{\star}})_{t\in I},\quad (\chi^{0}_{-t}\chi^{1}_{1}G_{t^{\star}})_{t\in I},\quad G_{\restriction M\times[s_{1},1]}.$$
        
    \subsection*{Case (\ref{small step})}  Here the sum of the dimensions of $D_{0}$ and $D_{1}$ is exactly $m$, so after applying Lemma \ref{l: wlog transverse} and Observation \ref{o: conjugation}, we may only assume that $D_{0}$ and $D_{1}$ intersect transversely in finitely many points in $\mathring{D_{0}}\cap\mathring{D_{1}}$. 
    This turns out to be good enough, since we are able to move $D_{0}$ out of the way first, by applying a sequence of moves of dimension $0$ and $m-2$ only, then perform the deformation along $D_{1}$ and finally apply a twisted copy of the deformation along the image of $D_{0}$, which still results in a sequence of moves of dimension $m-1$ (see Figure \ref{angelus}).  
    As a matter of fact, we need to take care of all the $L$-tangencies of dimension $(m-1)$ in $\sigma$, with their associated arcs ($0$-bridges) simultaneously. This requires some care. We spell out the details below. 
      	
   Fix some chart 
   	$$\phi=\pmb{y}:U\cong B^{m-2}(3)\times(-3,3)^{2}\subseteq\R^{m}$$
   	 such that
   	\begin{enumerate}[(I)]
   		\item \label{dul}$\phi(U\cap N_{\star})=\phi(U)\cap\{(x_{1},\dots x_{m})\,|\,x_{m-1}=\frac{x_{1}^{2}+\dots x_{m-2}^{2}}{2}-2\}$,
   		\item \label{sed}$\phi(L\cap U)=\phi(U)\cap\{y_{m-1}=0\}$,
   		\item \label{ned}$ \phi(D_{1})=\{(x_{1},\dots x_{m})\,|\,\frac{x_{1}^{2}+\dots x_{m-2}^{2}}{2}-2\leq x_{m-1}\leq 0,\,x_{m}=0\}$. 
   	\end{enumerate}
   	
    We think of the $m$-th factor as the ``thickness" of the box, the direction along which some tubular sections of the image of $N$ will cross it, and of its $(m-1)$-th factor as its ``height", the direction along which these tubular sections will be deformed later on.
    Consider also the sets 
   	\begin{align*}
   		 T':=&B^{m-2}(1)\times(-1,1)\times [-2,2],\quad\quad T:= \phi^{-1}(T'),\\
   		Q' :=&B^{m-2}(1)\times(-1,0)\times [-2,2],\quad\quad Q:=\phi^{-1}(Q').
   	\end{align*}
   	Choose some curve
   	$$
	   	\gamma:(-3,3)\to Q',\quad\quad\gamma(s):=(\underline{0},\theta(s)),
   	$$
   	 where $\underline{0}\in\R^{m-2}$ in the expression above and $\theta=(\theta_{1},\theta_{2}):[-3,3]\to  (-1,0)\times [-2,2]$ is a regular curve 
   	 satisfying the following conditions:
   	\begin{itemize}
   		\item $\theta(-s)=(\theta_{1}(s),-\theta_{2}(s))$ for all $s$, 
   		\item $\theta_{2}$ is non-decreasing, 
   		\item $\theta_{2}(3\nu)=2\nu$ for $\nu\in\{-1,1\}$, 
   		\item $\theta_{2}$ is constant (i.e., $\theta$ is vertical) on $[1,2]$ (hence, also on $[-2,-1]$),  
   		\item $\theta_{1}$ is constant (i.e., $\theta$ is horizontal) in some neighbourhood of $\{-3,3\}$,  
   		\item $(\theta_{1})'(s)\leq 0$ for $s\in [0,3)$, 
   		\item $(\theta_{1})''(s)<0$ for $s\in (-1,1)$, 
   		\item $\theta_{1}(2)\geq\frac{1}{2}$.
   	\end{itemize}
    Write $\alpha:=\theta_{1}(1)$ and $\beta:=\theta_{1}(2)$, so that  $\beta<\alpha$. 
    
   	Given isometries $\mu_{s}$ between $\R^{m-1}$ and the space of vectors orthogonal to $\gamma'(s)$ in $\R^{m}$ for $s\in[-3,3]$, we have that for $\epsilon>0$ small enough the map 
   	$$\xi:V:=[-3,3] \times \overline{B}^{m-1}\to Q',\quad \xi(s,v):=\gamma(s)+\epsilon\mu_{s}(v),$$ 
   	is a smooth embedding.
   	If we denote by $\pi_{l}$ the projection on the $l$-th coordinate, we may also assume that and $v\in \overline{B}^{m-1}$ we have:
   	\begin{itemize}
   		\item  $\pi_{m-1}(\xi(\{s\}\times \overline{B}^{m-1})\subseteq [\alpha,0)$ for $s\in[-1,1]$, 
   		\item  $\pi_{m-1}(\xi(\{s\}\times \overline{B}^{m-1})\subseteq (-1,\beta)$ for $s\notin (-2,2)$.
   	\end{itemize}
         
    Given a closed interval $J\subseteq\R$, we say that an embedding $\kappa:J \times \overline{B}^{m-1}\to M$ is $\xi$-restricted if $im(\kappa)\subseteq im(\phi^{-1}\circ\xi)$ and there is some diffeomorphism
    $\rho:J\cong[-3,3]$, $\nu\in(0,1)$ and $u\in B^{m-1}$ such that 
    $\phi(\kappa((s,v))=\xi(\rho(s),\nu v+u)$ for all $s\in J$ and $v\in\overline{B}^{m-1}$.  
    We say that $\kappa$ is adapted to $\xi$ if there are finitely many disjoint closed intervals $\{J_{l}\}_{l=1}^{n}$ such that $\kappa^{-1}(Q)=\bigcup_{l=1}^{n}J_{l}\times\overline{B}^{m-1}$ and 
    $\kappa_{\restriction J_{l}\times\overline{B}^{m-1}}$ is $\xi$-restricted for all $1\leq l\leq n$. 
    By a $\xi$-spherical shell we mean a set of the form $\phi(\kappa(J\times S^{m-1}))\subseteq Q'$, where $\kappa$ is $\xi$-restricted. 
    
    The following claim is left to the reader. The last assertion can be proven by exhibiting subspaces of the right dimension on which the Hessian is positive and negative definite, respectively.
    \begin{claim}
    	\label{c: big hessian claim} For $\epsilon>0$ small enough the map $\xi$ above is such that for any $\xi$-spherical shell $\Sigma$ in $Q'$ there are exactly two values $\lambda_{0}<\lambda_{1}$ in $(\beta,0)$ for which the plane $L_{\lambda}:=\{y_{m}=\lambda\}$ is tangent to $\Sigma$, and $L_{\lambda_{i}}$ is tangent to $\Sigma$ at a single point for $i=0,1$. Locally around $p_{i}$ the surface $\Sigma$ is the graph of a function $f$ with a unique critical point, corresponding to $p_{i}$, and a non-singular Hessian such that the number positive entries in its diagonal form is $0$ for $i=0$ and $m-2$ for $i=1$.    
    \end{claim}
    
    Fix $\xi$ as given by the claim.

   \begin{figure}
   	\captionsetup{width=\linewidth}
    \centering
    \begin{subfigure}[t]{0.4\textwidth}
        \centering
        \includegraphics[width=\linewidth]{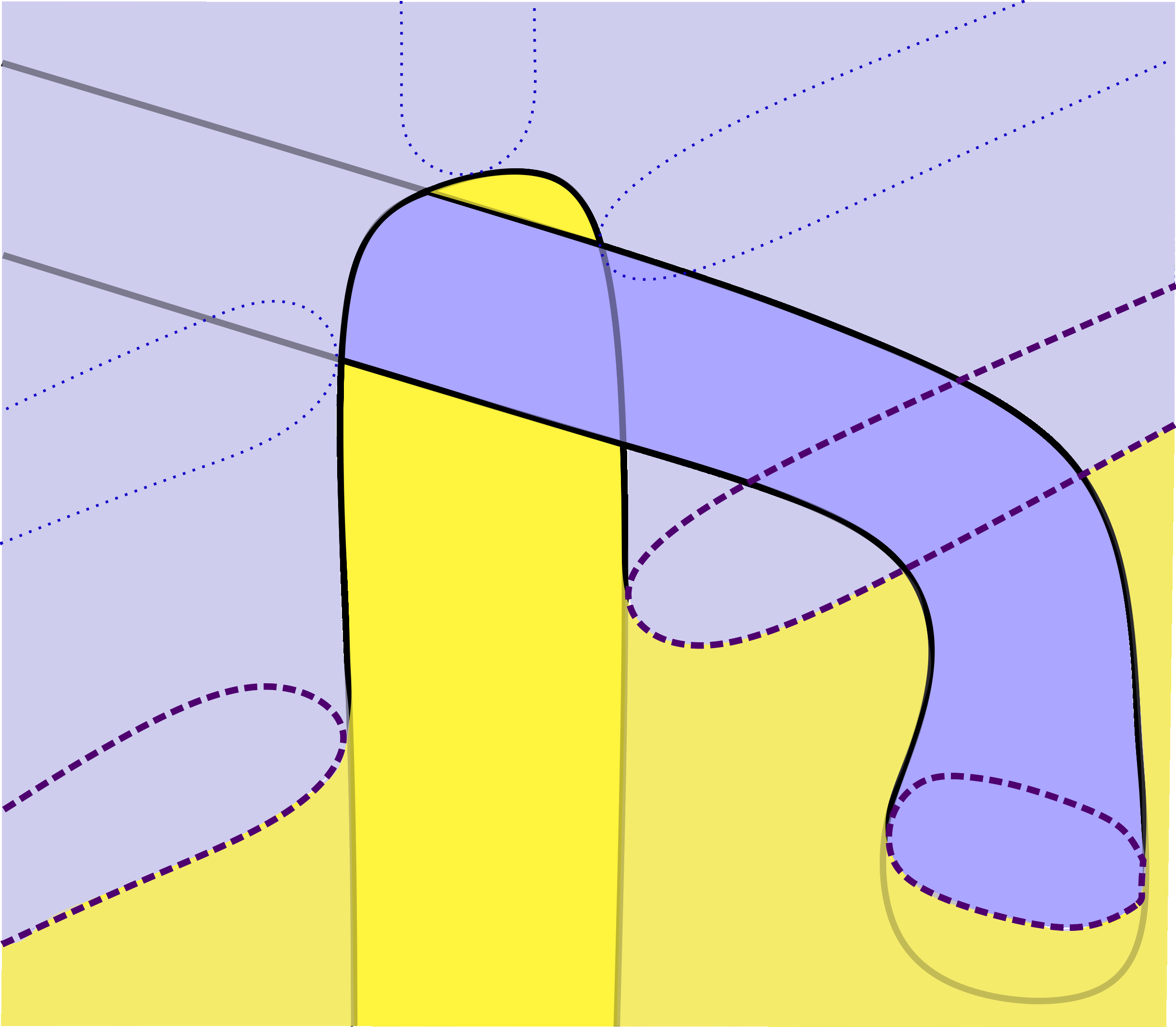} 
        \caption{Initial configuration.} \label{fig:a1}
    \end{subfigure}
    \hfill
    \begin{subfigure}[t]{0.4\textwidth}
        \centering
        \includegraphics[width=\linewidth]{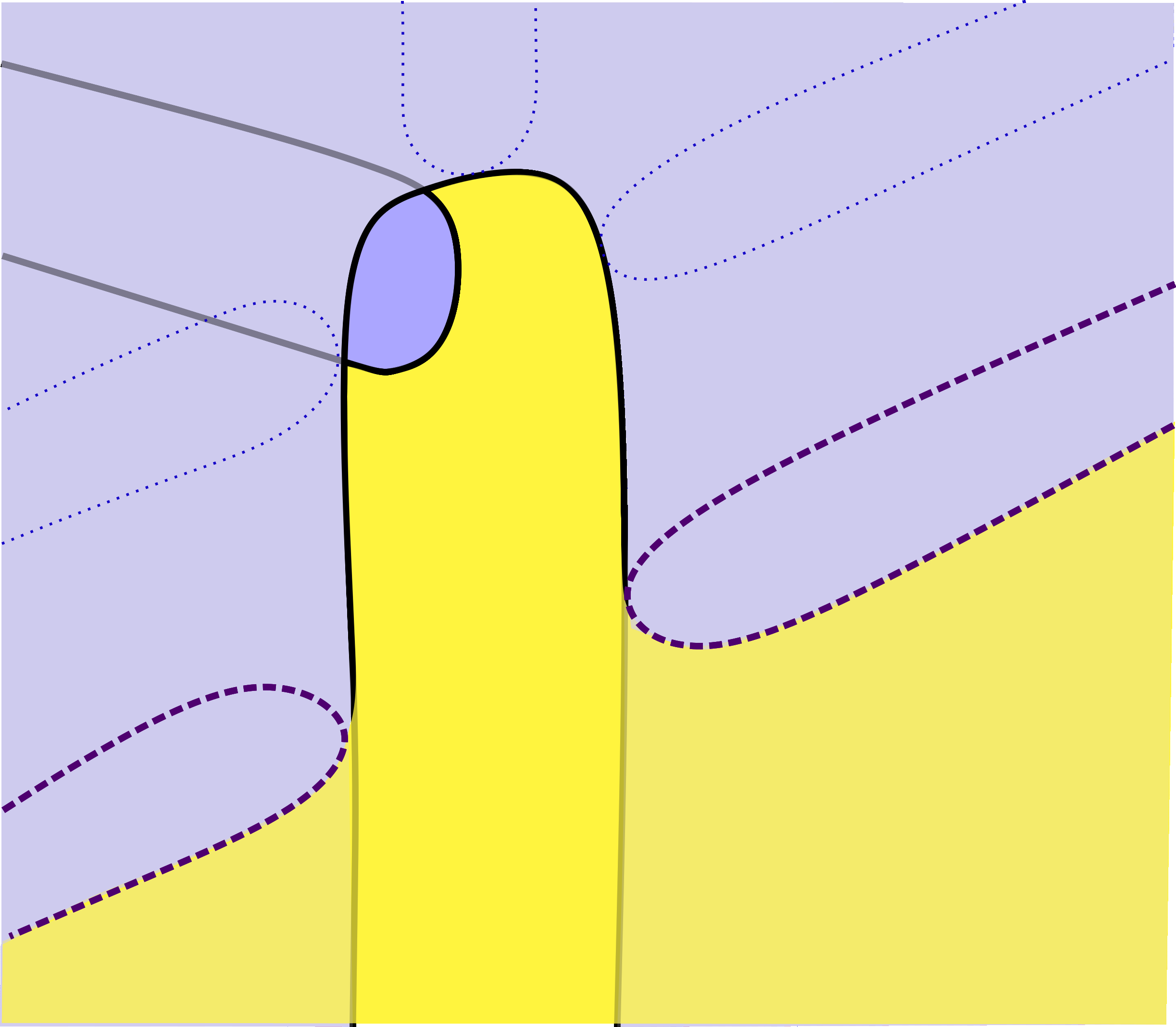} 
        \caption{After a $2$-move on \subref*{fig:a1}.} \label{fig:a2}
    \end{subfigure}
   \vspace{0.1cm}
   
     \begin{subfigure}[t]{0.4\textwidth}
        \centering
        \includegraphics[width=\linewidth]{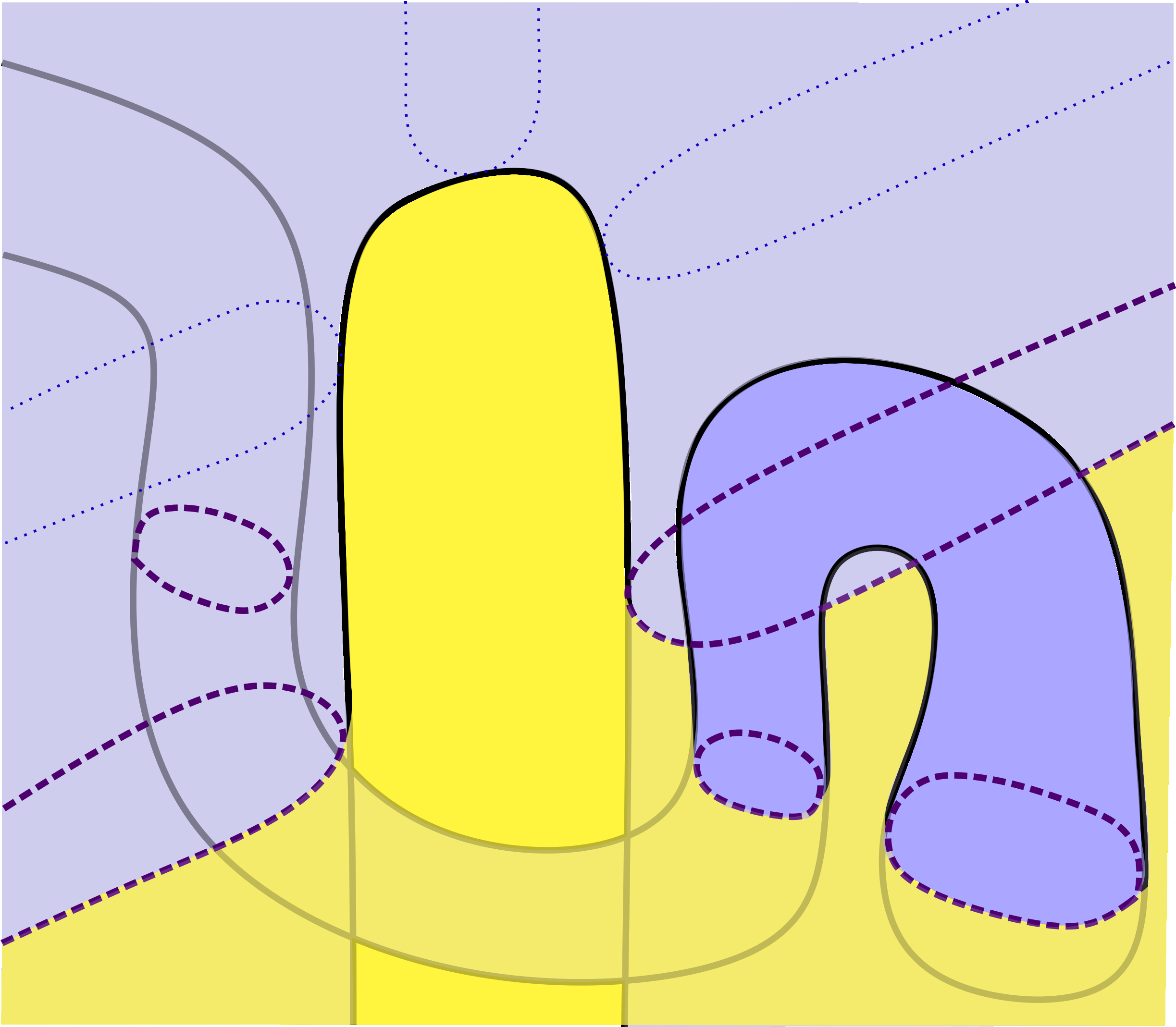} 
        \caption{After a $(0,1)$-isotopy on \subref*{fig:a1}.} \label{fig:a2b}
    \end{subfigure}
    \hfill
    \begin{subfigure}[t]{0.4\textwidth}
        \centering
        \includegraphics[width=\linewidth]{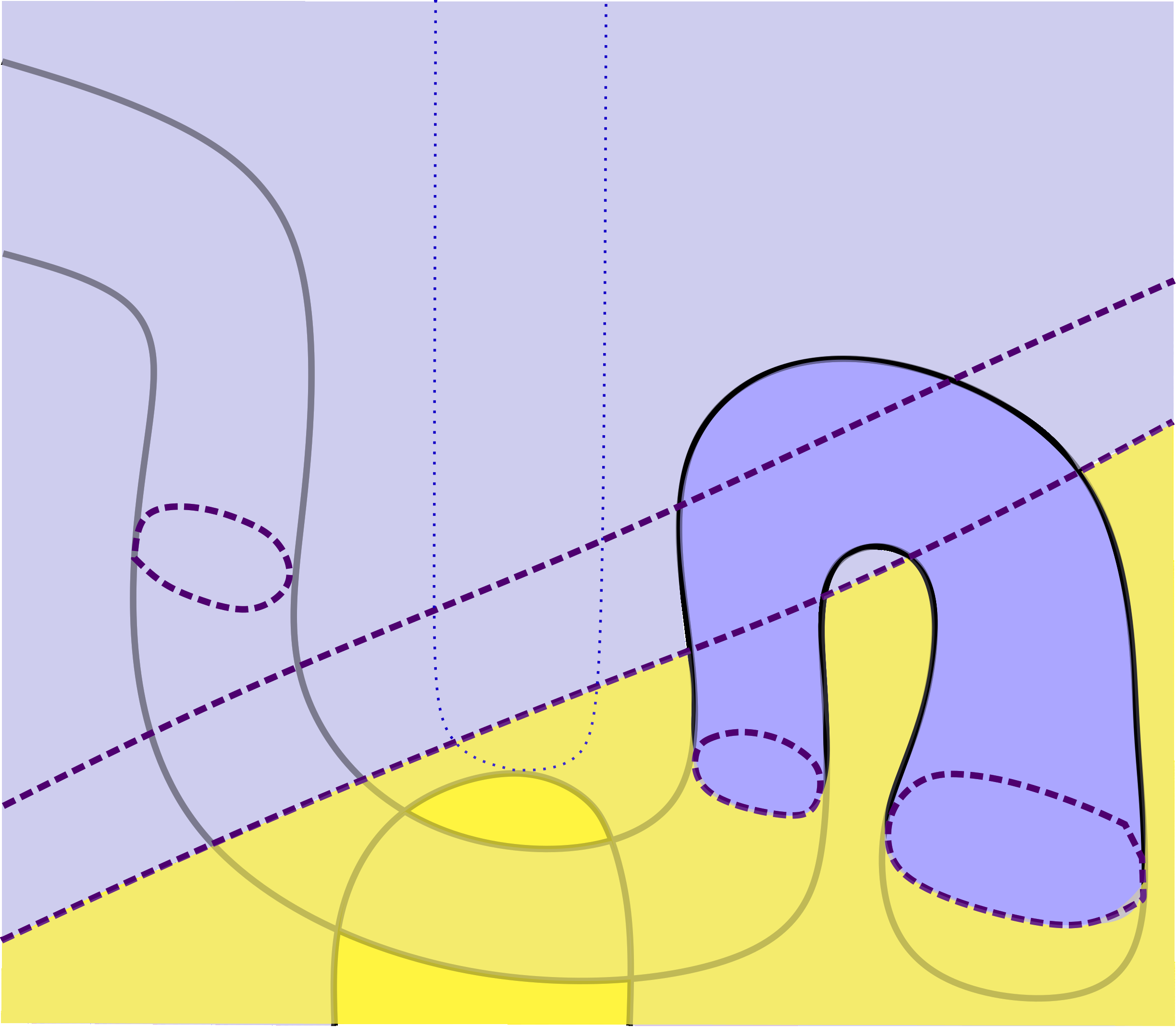} 
        \caption{After a $2$-move on \subref*{fig:a2b}.} \label{fig:a3b}
    \end{subfigure}
   
    \vspace{0.1cm}
    \begin{subfigure}[t]{0.4\textwidth}
    \centering
        \includegraphics[width=\linewidth]{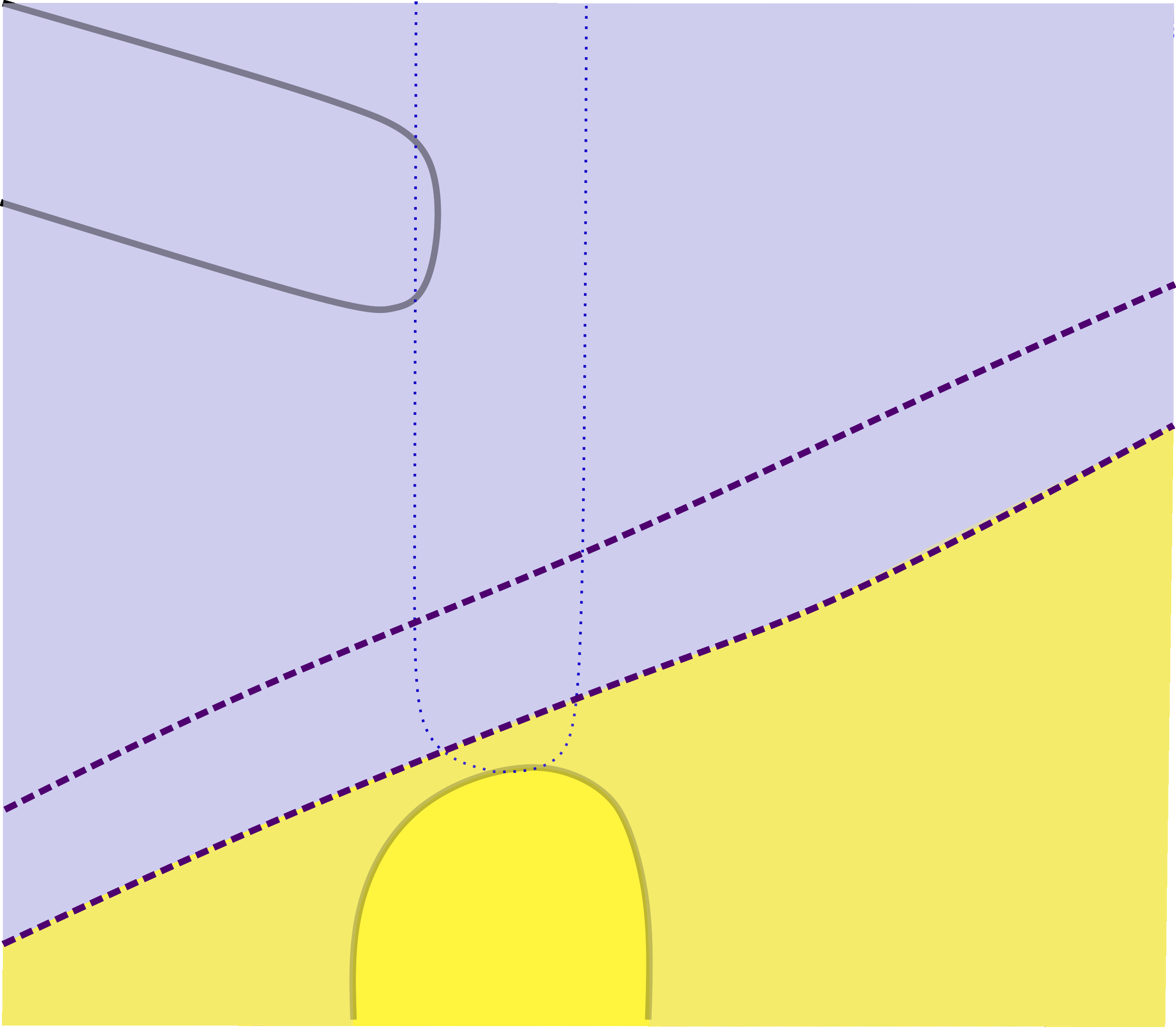} 
        \caption{Final configuration.} \label{fig:af}
    \end{subfigure}
    \hfill  
    \begin{subfigure}[t]{0.4\textwidth}
    \centering
        \includegraphics[width=\linewidth]{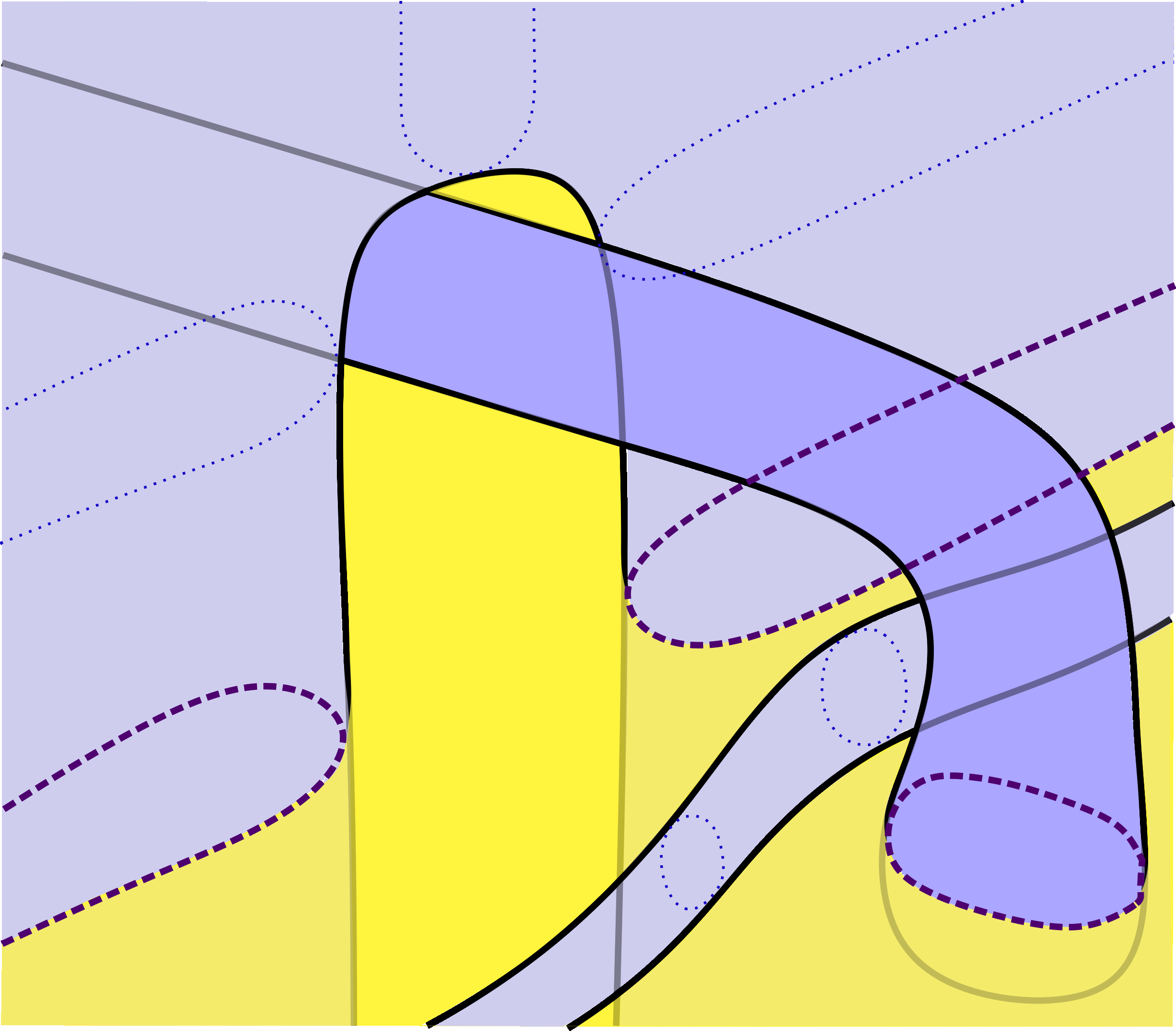} 
        \caption{Additional component.} \label{fig:d}
    \end{subfigure}

    \caption{\label{angelus} Case (\ref{small step}) of Lemma \ref{l: commutation move}. The submanifold $L$ is drawn as a horizontal plane (yellow) and the intersection of the image of $N$ (blue) with $L$ is indicated by a thick dashed line (the thinner dotted lines serve only to highlight the position of $N$). We are given an isotopy of $N$ with type $(2,1)$, which traverses stages \subref*{fig:a1},\subref*{fig:a2},\subref*{fig:af} and Lemma \ref{l: commutation move} provides an alternative isotopy with type $(0,1,1,2,2,2)$ from \subref*{fig:a1} to \subref*{fig:af} that passes through intermediate configurations \subref*{fig:a2b} and \subref*{fig:a3b}. In \subref*{fig:a1} the image of $N$ can be described as consisting of the boundary of an arch whose two pillars cross the plane and a disk in the shape of a prong that passes under the arch and then descends to intersect the plane $L$ in a circle. So far these moves are inessential, but the presence of an additional component, as in \subref*{fig:d}, suffices to render them essential. Notice that the vertical direction is here reversed with relative to the text for the sake of visual clarity.      }
\end{figure}   
    
    \renewcommand{\t}[0]{T}
    \newcommand{\q}[0]{Q}
    \begin{lemma}
    	\label{l: big arranging pushes sublemma} We may replace the restriction of $G$ to $M\times[-1,t^{\star}]$ by some concatenation $\tilde{G}*G^{-r}\dots*G^{0}$ which has the same end-points and is such that $\iota^{*}(\tilde{G})$ is $L$-clean and for $-r\leq i\leq 0$ the isotopy $\iota^{*}(G^{i})$ has a single tangency point of dimension $m-1$.\footnote{Equivalently, $(\iota^{*}(G^{-i}))^{op}$ has a single tangency of dimension $0$.} Moreover, if for $-r\leq i\leq 0$ we let 
    	$$ \tilde{g}:=\tilde{G}_{1},\quad \tilde{f}:=\tilde{g}\circ\iota, \quad\tilde{N}:=\tilde{f}(N),\quad g_{i}:=G^{i}_{1}, \quad f_{i}:=g_{i}\circ\iota,\quad N_{i}:=f_{i}(N),$$
    	then we have 
    	\begin{itemize}
    		\item a $0$-cylinder (i.e., an arc) $D_{i}$ in $M$ bridging $N_{i}$ and $L$,
    		\item a tubular neighbourhood $\xi_{i}:(-1,1) \times B^{m-1}$ of $\mathring{D}_{i}$,
    		\item a diffeotopy $\chi^{i}$ of $Id_{M}$ such that 
    		 $$supp(\chi_{i})\cap U \subseteq im(\xi_{i}),$$
    	\end{itemize}
    	 and the following holds for $-r\leq i\leq 0$:
    	\begin{enumerate} [(i)]
    		\item \label{isotopy} $G^{i}_{t}=\chi^{i}_{-t}g_{i}$ for all $t\in I$,
    		\item \label{compatible} $\xi_{i}$ and $(\chi^{i}_{t}\circ f_{i})_{t \in I}$ are compatible in the sense of Lemma \ref{l: non-destructive isotopies},
    		\item \label{adapted} $\xi_{i}$ is adapted to $\xi$,  
    		\item \label{nice annuli} the intersection $\mathcal{A}:=N_{i}\cap \bar{T}$ (respectively, $\tilde{N}\cap\bar{T}$) is mapped by $\phi$ to a finite union of $\xi$-spherical shells,
    	 \item \label{different critical points} no set of the form $\pi_{m-1}^{-1}(\nu)\cap Q'$ for $\nu\in(\beta,0)$ is tangent to $\phi(\mathcal{A})$ at two different points. 
    	\end{enumerate}
    \end{lemma}
    \begin{subproof}
   	   We inductively construct $G^{0},G^{-1},\dots$, modifying $G_{\restriction M\times[-1,t^{\star}]}$ as we go along. Suppose we are given $G'*G^{-k+1}*G^{-k+2}\dots G^{0}$, 
   	   where $G^{-j}$, $j\leq k$ satisfy the assumptions of the conclusion and $H':=\iota^{*}(G')$ is $L$-compatible, with a type consisting $r-k+1$ many entries equal to $m-1$.
   	    
   	   Since we have the freedom to choose $\chi^{-k}$ in an arbitrary neighbourhood of $D^{-k}$, the same arguments as in the beginning of the proof of Lemma \ref{l: commutation move} provides some $0$-bridge (arc) $D_{-k}$ between $N_{-k}$ and $L$, some tubular neighbourhood $\xi_{-k}:I\times B^{m-1}$ of $D_{-k}$ and a diffeotopy 
   	   $\chi^{-k}$ compatible with $\xi_{-k}$ such that $(\chi^{-k}_{t}\circ f_{-k})_{t\in I}$ has a single tangency point
   	   of dimension $0$ and there is some compactly supported diffeotopy $G''$ of $Id_{M}$ such that  $G''_{1}=\chi_{1}^{-k}g_{-k}$ and $\iota^{*}(G'')$ is $L$-compatible, with $r-k-1$ tangency points of dimension $m-1$.
   	   
   	   By part (\ref{nice annuli}) of our inductive assumption, the intersection of $N_{-k}$ with $\bar{T}$ is the union of some finite collection of $\xi$-spherical shells.
   	   This, together with Corollary \ref{c: refined transversality} and Observation \ref{o: conjugation}, allows us to add the following assumptions one by one.
   	   \begin{itemize}
   	   	\item \label{1} The two points in $\partial D_{-k}=N_{-k}\cap D_{-k}$ lie outside of $\t$. Notice that this might involve sliding $\partial D_{k}$ along some of the finitely many $\xi$-spherical shells in $N_{-k}\cap \t$. 
   	   	\item The intersection $D_{-k}\cap\t$ is contained in $\q$.
   	   	\item \label{2} The arc $D_{-k}$ intersects $D_{1}$ transversely. 
   	   	\item \label{2.24} We have $D_{-k}\cap D_{1}\subseteq D_{1}\cap im(\xi)$.  
   	   	\item \label{2.5} There is $\nu>0$ such that for each $p\in D_{-k}\cap D_{1}$ some neighbourhood of $p$ in $D_{1}$ is of the form $\xi((-\nu,\nu)\times\{v\})$. This can be achieved, for instance, using the inverse function theorem and Lemmas \ref{l: isotopy extension} and \ref{l: tubular neighbourhoods}.
   	   \end{itemize}
   	    
   	    \begin{lemma}
   	    	We may assume $D_{-k}\cap \t=D_{-k}\cap\q$ is the intersection of finitely many arcs of the form 
   	   	 $\xi([-3,3]\times\{v\})$.  
   	    \end{lemma}
        \begin{subsubproof}
        	Using (\ref{dul})-(\ref{ned}) one can easily find an open set $U'\subseteq\overline{U}'\subseteq B^{m-1}(2)$
        	and some embedding $\zeta:[-4,4]\times \overline{B}^{m-1}(2)\to U\setminus L$ such that:
        	\begin{itemize}
        		\item $\zeta_{\restriction [-3,3]\times\overline{B}^{m-1}}=\xi_{\restriction [-3,3]\times\overline{B}^{m-1}}$ 
        		\item $D_{-k}\cap T\subseteq\zeta([-3,3]\times U')$,
        		\item  $\zeta([-4,4]\times U')\cap N_{-k}=\emptyset$. 
        	\end{itemize}
         It is then easy to find an element $h\in\D_{c0}^{N_{-k}\cup L}(M)$ of $Id_{M}$ such that there is some orientation preserving diffeomorphism $\qopa$ of $[-4,4]$ mapping $[-\epsilon,\epsilon]$ onto $[-3,3]$ for which   
   	   	 $\zeta(h(\zeta^{-1}(s,v)))=(\qopa(s),v)$ for any $(s,v)\in[-\nu,\nu]\times U'$. We obtain the desired result by replacing $D_{-k}$ by its image by $h$ (using Observation \ref{o: conjugation}).
        \end{subsubproof} 
   	   For any $h\in\D_{c0}^{N_{-k}\cup L}(M)$ we may also replace $\xi_{-k}$ with $h\circ\xi^{-k}$ and 
   	   $\chi_{t}^{-k}$ with $h\chi^{-k}_{1}h^{-1}$. By combining this with the equivalence of tubular neighbourhoods (Lemma \ref{l: tubular neighbourhoods}) and Lemma \ref{l: isotopy extension}, as well as Observation \ref{o: conjugation}, one may assume $\xi_{-k}$ is adapted to $\xi$ (\ref{adapted}). We can also require that  $\phi(N_{-k-1}\cap\t)=\phi(\chi^{-k}_{1}(N_{-k})\cap\t)$ is a finite union of preimages of $\xi$-spherical shells (\ref{nice annuli}). We leave to the reader the observation that the induction above can be carried on in such a way as to ensure property (\ref{different critical points}) is also satisfied.
    \end{subproof}
    
    \renewcommand{\O}[0]{\Omega}
    \newcommand{\GV}[0]{\Omega^{\uparrow}}
    \newcommand{\HGV}[0]{\hat{\Omega}^{\uparrow}}
    \newcommand{\gv}[0]{g_{\uparrow}}
    We keep the notation of \ref{l: big arranging pushes sublemma}.
   	It is easy to find some diffeotopy $\GV$ of $Id_{M}$ supported on $T$ such that $\HGV_{t}:=\phi\circ G_{t}\circ\phi^{-1}$ satisfies:
   	\begin{itemize}  
   		\item $\HGV_{t}(\{y_{m}\leq\beta\})\subseteq\{y_{m}\leq\beta\}$ for all $t\in I$, 
   	 	\item there is a diffeomorphism $\lambda_{t}$ of $(-1,1)$, restricting on $(\frac{\alpha+\beta}{2},0)$ to the translation by $\frac{(\alpha+\beta)(t+1)}{4}$, such that 
   	 	$$\HGV_{t}(\underline{y})=(y_{1},\dots y_{m-1},\lambda_{t}(y_{m}))$$
   	        for any $t\in I$ and $\underline{y}\in (-1,1)^{m-2}\times(0,\beta)\times(-2,2)$.  	  
   	 \end{itemize}
    
    For simplicity, write $\gv:=\GV_{1}$. Notice that $supp(\GV)\cap N_{\star}\subseteq T\cap N_{\star}=\emptyset$, so that 
    \begin{equation}
    	\label{eq:omega and star}\tag{$\dagger$} \GV_{t}\circ f_{\star}=f_{\star}\quad\quad\text{for all $t\in I$ and, in particular, $\quad \gv\circ f_{\star}=f_{\star}$.}
    \end{equation}    
    The set $\gv(im(\xi))$ is disjoint from $D_{1}$, so we may also assume 
   	\begin{equation}
   	\label{disjoint supports} \tag{$\ddag$} supp(\gv\chi^{i}_{t}\gv^{-1})\cap supp(\chi^{1})=\emptyset\quad \text{for all $-r\leq i\leq 0$ and $t\in I$}.
   	\end{equation}
   	
   		\newcommand{\GG}[0]{\Upsilon^{+}}
   	\newcommand{\GGG}[0]{\Upsilon^{-}}
   	   	
   	It follows from Lemma \ref{l: compatible redefinition}, Claim \ref{c: big hessian claim} and item (\ref{nice annuli}) in Lemma \ref{l: big arranging pushes sublemma} that the isotopy $\tilde{f}^{*}(\GV)$ is $L$-compatible and its type contains only entries of type $0$ and $m-2$.\footnote{It is hard to assert more about this sequence other than the first entry is a $0$ and the final one $m-2$.}  
    Consider next the diffeotopy 
    $$\GG:=(\chi^{1}_{t}\gv \tilde{g})_{t\in I}.$$ 
      
    \renewcommand{\HH}[0]{\Xi}
     Consider also the following diffeotopy, where the concatenation starts with $i=-r$ and ends with $i=0$:  
     $$\HH:=\textstyle{\bigast_{i=-r}^{0}}(\chi^{i}_{-t}g_{i}g_{\star}^{-1})_{t\in I}.$$ 
    Recall that in the notation of Lemma \ref{l: big arranging pushes sublemma} we have
    $g_{\star}=G^{0}_{1}=g_{0}$, so that  
    $$\HH_{-1}=\tilde{g}g_{\star}^{-1},\quad\quad \HH_{1}=Id_{M}, \quad\quad supp(\HH)\subset\textstyle\bigcup_{l=0}^{r}supp(\chi^{-l}).$$
     Next, consider the compactly supported diffeotopy 
   	$$\GGG:=(\gv\Xi_{t}\gv^{-1}\chi^{1}_{1}\gv g_{\star})_{t\in I}.$$
   	
   	Property  (\ref{disjoint supports}) implies that $\gv \chi^{i}_{t}\gv^{-1}$ commutes with $\chi^{i}_{1}$
   	for all $-r\leq i\leq 0$ and $t\in I$. Therefore, so does the element $\HH_{-1}$ and we have
    $$
        \GG_{1}=\chi^{1}_{1}\gv \tilde{g}=\chi^{1}_{1}\gv\HH_{-1}\gv^{-1}\gv g_{\star}=\gv \HH_{-1}\gv^{-1}\chi^{1}_{1}\gv g_{\star}=\GGG_{-1}.
    $$ 
    By using $\HH_{-1}\circ f_{\star}=\tilde{f}$ in the first equality below, (\ref{eq:omega and star}) in the second and (\ref{disjoint supports}) in the third, we also get
    \begin{align*}
    (\gv\circ \tilde{f})_{\restriction f_{\star}^{-1}(supp(\chi^{1}))}
    =&(\gv\Xi_{-1}\gv^{-1}\gv \circ f_{\star})_{\restriction f_{\star}^{-1}(supp(\chi^{1}))}=\\
    &(\gv\Xi_{-1}\gv^{-1}\circ f_{\star})_{\restriction f_{\star}^{-1}(supp(\chi^{1}))}=(f_{\star})_{\restriction f_{\star}^{-1}(supp(\chi^{1}))},
    \end{align*}
    and since $f_{\star}^{*}(\chi^{1})$ is $L$-compatible with type $(m-2)$, so must be $$\iota^{*}(\GG)=(\chi^{1}_{t}\gv\circ\tilde{f})_{t\in I}=(\gv\circ\tilde{f})^{*}(\chi^{1}).$$ 
          
   	For $-r\leq j\leq 0$ the properties of $\xi_{j}$, the description of $\GV$ and item (\ref{adapted}) of Lemma \ref{l: big arranging pushes sublemma} imply that for any point $q\in \gv(D_{j})\cap L$ the  
   	the fiber image through $q$ of the tubular neighbourhood $\gv\circ\xi_{j}$ is contained in $L$. 
   	Property (\ref{eq:omega and star}) implies
   	 $$\iota^{*}(\GGG)=(\gv\Xi_{t}\gv^{-1}\chi^{1}_{1}\circ f_{\star})_{t\in I},$$
   	and (\ref{disjoint supports}) implies: 
   	$$(f_{\star})_{\restriction  f_{\star}^{-1}(\gv supp(\HH))}=(\chi_{1}^{1}\circ f_{\star})_{\restriction f_{\star}^{-1}(\gv supp(\HH))}.$$ 
 
   	The above remarks, Lemma \ref{l: compatible redefinition} and properties (\ref{first property}) and (\ref{second property}) in Lemma \ref{l: non-destructive isotopies}, imply that $\iota^{*}(\GGG)$ is $L$-compatible and purely destructive. For each $-r\leq i\leq 0$ the type contains one $(m-1)$-entry corresponding to the original tangency of $\iota^{*}(G^{i})$ and  	 
   	two more for each of the points in the intersection $D_{i}\cap D_{1}$.
   	
   	\newcommand{\Gd}[0]{\Upsilon^{\downarrow}}
   	Finally, consider the diffeotopy 
   	$$\Upsilon^{\downarrow}:=(\chi^{1}_{1}\GV_{-t}g_{\star})_{t\in I}.\, $$
   	 Notice that $\GGG_{1}=\chi^{1}_{1}\gv g_{\star}=\Gd_{-1}$. 
    It follows from (\ref{eq:omega and star}) that $\iota^{*}(\Gd)_{t}$ is constant and thus $L$-clean.
    This concludes the proof of Lemma \ref{l: commutation move} in case (\ref{small step}), since as $G'$ we may now take the concatenation of the diffeotopies: 
	    $$
	    \tilde{G},\quad\quad \tilde{g}^{*}(\GV),\quad\quad \GG,\quad\quad \GGG,\quad\quad \Gd,\quad\quad G_{\restriction M\times[s_{1},1]} .
	    $$
   \end{proof}

   \begin{corollary}
   	\label{c: compatibility and decompositions}Suppose we are given a nice $m$-triple $(N,M,L)$, a compactly supported diffeotopy $G$ of $Id_{M}$ and $\iota:N\to M$ such that $\iota^{*}(G)$ is $L$-compatible. Then there are compactly supported diffeotopies $G',G''$ such that:
   	\begin{itemize}
   		\item $G'_{-1}=Id_{M}$, $G'_{1}=G''_{-1}$, $G''_{1}=G_{1}$
   		\item $\iota^{*}(G')$ is $L$-compatible and purely constructive,
   		\item $\iota^{*}(G'')$ is $L$-compatible and purely destructive.
   	\end{itemize}
   \end{corollary}
   \begin{proof}
   	We may assume the type of $H$ is of the form $$\sigma^{\frown}(m-1,m-1,\dots m-1,d)^{\frown}\tau,$$ where  $\sigma$ contains no occurrences of $m-1$ and $d\leq m-2$. The proof is an easy induction on the length of $\tau$ using Lemma \ref{l: commutation move}.
   \end{proof}

\section{Neighbourhoods of the identity contain fix-point stabilizers of finite embedded simplicial complexes}
  
  \newcommand{\kk}[0]{\mathfrak{H}} 
  \newcommand{\subg}[1]{\langle #1 \rangle}
  \label{s: neighbourhoods are rich}
  \newcommand{\mylabel}[2]{#2\def\@currentlabel{#2}\label{#1}}
\begin{notation}
	  \label{n: G} Let $M$ be a smooth $m$-dimensional manifold. We consider $\hh$, $\gg$ as in one of the following two situations: 
  \begin{enumerate}
    \item[\mylabel{conddiff}{$(\mathsf{Df})$}] $\hh:=\D_{c0}(M)\leq\gg\leq\D(M)$,
  	\item[\mylabel{condhom}{$(\mathsf{Tp})$}] $\hh:=\H_{c0}(M)\leq\gg\leq\H(M)$.
  \end{enumerate}
  We will also denote by $\T_{co}$ the restriction of the compact-open topology to $\gg$ and $\T$ a group topology on $\gg$ coarser than the compact-open topology. 
  
  In the setting \ref{conddiff}
  by an embedded ball in $M$ we will mean a smoothly embedded ball. In case \ref{condhom} by such term we will mean a collared ball in $M$. 
\end{notation}
  
  All the results of this section will take place by default in either of the two settings of \ref{n: G}. 
   
  \begin{observation}
  \label{o: generation compact-open topology} If $\T$ is strictly coarser than $\tc$, then there do not exist $m$-balls $D,D'$ in $M$ with $D\subseteq D'$
  and $\mathcal{V}\in\nd$ such that $g\cdot D\subseteq D'$ for all $g\in \mathcal{V}$.
  \end{observation}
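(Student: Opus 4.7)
The plan is to prove the contrapositive: assuming there exist $m$-balls $D \subseteq D'$ and $\V \in \nd$ with $g \cdot D \subseteq D'$ for every $g \in \V$, I want to conclude $\T = \tc$, contradicting strict coarseness. Since $\T \subseteq \tc$ is standing, it suffices to exhibit every basic compact-open neighborhood $\V_{K,\epsilon}$ of the identity as an element of $\nd$. Replacing $\V$ by $\V \cap \V^{-1}$, which is again in $\nd$ since $\T$ is a group topology, we may also assume $g^{-1} \cdot D \subseteq D'$ for all $g \in \V$.

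The key intermediate step is a conjugation-based homogeneity assertion: for any pair of $m$-balls $E \subseteq E'$ in $M$, there is $\V_{E,E'} \in \nd$ with $g^{\pm 1}(E) \subseteq E'$ for every $g \in \V_{E,E'}$. To produce it, apply Fact \ref{f: transitive on disks} to the single balls $D'$ and $E'$ in the connected manifold $M$, obtaining $h_0 \in \kk \leq \gg$ with $h_0(D') = E'$; then apply the same fact inside the connected open ball $\mathring{E'}$ to the two $m$-balls $h_0(D)$ and $E$ contained in it, obtaining $h_1 \in \D_{c0}(\mathring{E'})$ with $h_1(h_0(D)) = E$. Extending $h_1$ by the identity outside $\mathring{E'}$ yields a diffeomorphism of $M$ supported in the interior of the closed ball $E'$, hence in $\gg$ by our standing hypothesis on $\gg$. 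Thus $h := h_1 h_0 \in \gg$ satisfies $h(D) = E$ and $h(D') = E'$, so $\V_{E,E'} := h \V h^{-1}$ lies in $\nd$ (conjugation being continuous in a group topology), and for $g \in \V_{E,E'}$ writing $g_0 := h^{-1} g h \in \V \cap \V^{-1}$ gives $g^{\pm 1}(E) = h \cdot (g_0^{\pm 1} \cdot D) \subseteq h \cdot D' = E'$.

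Given a compact $K \subseteq M$ and $\epsilon > 0$, cover $K$ by finitely many interiors $\mathring{E}_1, \ldots, \mathring{E}_k$ of $m$-balls $E_i$, each contained in an $m$-ball $E_i'$ with $\mathrm{diam}(E_i') < \epsilon$ with respect to the fixed Riemannian distance (easily arranged via small concentric chart balls and compactness). Then $\W := \bigcap_{i=1}^{k} \V_{E_i, E_i'} \in \nd$ satisfies: for any $g \in \W$ and any $p \in K$, picking $i$ with $p \in \mathring{E}_i$ gives $p, g^{\pm 1}(p) \in E_i'$, so $d(p, g^{\pm 1}(p)) \leq \mathrm{diam}(E_i') < \epsilon$. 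Thus $\W \subseteq \V_{K,\epsilon}$, hence $\V_{K,\epsilon} \in \nd$ and $\T = \tc$, the desired contradiction. The main delicate point is finding a single element of $\gg$ that simultaneously carries the pair $(D, D')$ to $(E, E')$: the hypothesis that $\gg$ contains diffeomorphisms supported in the interior of an embedded closed ball is used precisely to justify membership of $h_1$ in $\gg$ after Fact \ref{f: transitive on disks} has been used to match the outer balls via $h_0$.
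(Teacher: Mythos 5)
Your proof is correct and takes essentially the same route as the paper: two applications of Fact \ref{f: transitive on disks} to conjugate the pair $(D,D')$ onto prescribed arbitrarily small pairs of balls, followed by a finite-cover compactness argument showing that conjugates of $\V$ generate all compact-open neighbourhoods of the identity, contradicting strict coarseness. The only point to add is that since the hypothesis allows $D$ to meet $\partial D'$ (even $D=D'$), you should first replace $D$ by a concentric ball inside $\mathring{D}\subseteq\mathring{D}'$ (the hypothesis $g\cdot D\subseteq D'$ is inherited by this smaller ball), so that $h_{0}(D)$ genuinely lies in $\mathring{E}'$ before the second application of Fact \ref{f: transitive on disks}.
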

  \begin{proof}
  Assume for the sake of contradiction such $D$, $D'$ and $\V$ exist. By two applications of either Fact \ref{f: transitive on disks} in case \ref{conddiff} or Fact \ref{f: transitive on disks in homeo} in case \ref{condhom} one can easily see that, given $D,D'$, any $p\in M$ and any $\epsilon>0$ small enough there is $h\in\kk$ such that  $p\in h\cdot \mathring{D}\subseteq h\cdot D'\subseteq \nn_{\epsilon}(p)$. Then $g\cdot D_{0}\subseteq \nn_{\epsilon}(p)$ for any $g\in\V^{h^{-1}}$, where $D_{0}=h\cdot D$. It follows easily that the conjugates of $\V$ by the action of $\hh$ generate a system of neighbourhoods of $\tc$ at the identity (here one uses that basic neighbourhoods of the identity are of the form $\V_{K,\epsilon}$ for compact $K$).
  \end{proof}
   
  \begin{lemma}
  \label{l: mixing disks}Let $D,E_{1},\dots E_{k}$ be $m$-balls in $M$ with $E_{j}\nsubseteq D$ for $1\leq j\leq k$. Let also $K\subseteq M$ be a compact subset and $\epsilon$ a positive real. Then there exists $h_{1},\dots h_{d}\in \kk$, which fix $D$ and such that for any $1\leq j_{0},j_{1},\dots j_{d}\leq k$ and any connected component $C$ of the complement of $\bigcup_{l=1}^{d}h_{l}\cdot E_{j_{l}}$ either:
  \begin{itemize}
  \item $K\cap C=\emptyset$,
  \item $diam(C)<\epsilon$,
  \item or $C\subseteq \nn_{\epsilon}(D)$.
  \end{itemize}
  \end{lemma}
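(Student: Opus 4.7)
The plan is to cover $K$ outside a neighbourhood of $D$ by images of the $E_{j}$'s under diffeomorphisms in $\kk$ fixing $D$, relying on Fact~\ref{f: transitive on disks} (transitivity of $\kk$ on disjoint disks) and the hypothesis $E_{j}\nsubseteq D$. First, using the compactness of $K$, cover $K\setminus \nn_{\epsilon/2}(D)$ by finitely many closed balls $U_{1},\dots ,U_{N}$ of diameter less than $\epsilon/4$, each disjoint from $D$. For each $j\in\{1,\dots ,k\}$, the hypothesis $E_{j}\nsubseteq D$ yields a closed sub-ball $B_{j}\subseteq E_{j}\setminus D$, and after shrinking we may assume the $B_{j}$'s pairwise disjoint.

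For each pair $(i,j)\in\{1,\dots ,N\}\times\{1,\dots ,k\}$, invoke Fact~\ref{f: transitive on disks} on the disjoint family $\{D,B_{1},\dots ,B_{k}\}$ to produce $h_{i,j}\in\kk$ that fixes $D$ pointwise and sends $B_{j}$ onto a ball containing $U_{i}$, so that $h_{i,j}\cdot E_{j}\supseteq U_{i}$. List the $h_{i,j}$'s, possibly augmented with analogous elements realising a thin covering of the collar $\nn_{\epsilon}(D)\setminus\nn_{\epsilon/2}(D)$ (to prevent complement components from ``leaking'' outside the $U_{i}$'s into the interior of $K\setminus\nn_{\epsilon}(D)$), as $(h_{l})_{l=1}^{d}$.

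The verification is then to argue that for any sequence $(j_{l})$ of indices, the complement of $\bigcup_{l}h_{l}\cdot E_{j_{l}}$ has only components of the three permitted types. The key geometric point is that once $U_{i}$ is covered by some $h_{l}\cdot E_{j_{l}}$, any connected component of the complement meeting $U_{i}$ lies inside the difference $U_{i}\setminus h_{l}\cdot E_{j_{l}}$, hence has diameter at most that of $U_{i}$, which is below $\epsilon$; components missing every $U_{i}$ must be confined either to $M\setminus K$ or to $\nn_{\epsilon}(D)$ by the collar construction.

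\textbf{Main obstacle.} The hard step is the universal quantifier on $(j_{l})$: \emph{a priori} the adversary can pick $j_{l}\neq j$ at every entry $l=(i,j)$ associated to $U_{i}$ (e.g.\ via a derangement), avoiding all the planned coverings of $U_{i}$. To defeat this one makes each $h_{l}$ \emph{robust} by applying Fact~\ref{f: transitive on disks} to pack all $k$ images $h_{l}\cdot B_{1},\dots ,h_{l}\cdot B_{k}$ tightly inside a common small sub-region of $U_{i}$, so that $h_{l}\cdot E_{j_{l}}$ necessarily meets $U_{i}$ no matter which $j_{l}$ the adversary selects. This is also where Observation~\ref{o: generation compact-open topology} (and hence the strict-coarseness hypothesis on $\T$) can be brought to bear, by supplying additional elements from $\mathcal{V}\in\nd$ whose $\kk$-conjugates round out the configuration. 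The precise combinatorial design that makes the full family uniformly robust over all adversarial $(j_{l})$ is the technical heart of the proof.
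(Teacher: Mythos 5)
Your reduction to a covering problem is reasonable, and you correctly identified the crux: the adversary chooses $j_{l}$, so an element designed to place $E_{j}$ over $U_{i}$ is useless when $j_{l}\neq j$. But your proposed fix does not close this gap. Packing the images $h_{l}\cdot B_{1},\dots ,h_{l}\cdot B_{k}$ into a small sub-region of $U_{i}$ only guarantees that $h_{l}\cdot E_{j_{l}}$ \emph{meets} $U_{i}$, and meeting is not enough: a connected component of the complement can still pass through the uncovered part of $U_{i}$ and join far-away regions, so its diameter is not controlled. What is needed at each location is that the chosen images \emph{separate} (enclose) a small region from the rest of $M$, and no single element can be made robustly separating, because the $k$ disjoint candidate images under one $h_{l}$ cannot all contain, or all surround, a fixed target. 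Your earlier verification sentence ("any component meeting $U_{i}$ lies inside $U_{i}\setminus h_{l}\cdot E_{j_{l}}$") already conflates intersection with containment, and the appeal to Observation~\ref{o: generation compact-open topology} and elements of $\V\in\nd$ is off-track: the lemma is a purely geometric statement about elements of $\kk$, and its proof uses no neighbourhood of the identity at all (the coarseness hypothesis only enters later, in Lemma~\ref{l: spread all over}).

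The paper resolves the adversarial quantifier differently: it covers the relevant compact region by small balls $C_{i}\subseteq\mathring{C}'_{i}$ of diameter $<\epsilon$ and, inside each annulus $\mathring{C}'_{i}\setminus C_{i}$, places \emph{two} families of $k$ disjoint balls $\{D_{i,j}\}_{j}$ and $\{D'_{i,j}\}_{j}$ (thickened caps at staggered heights, each omitting only a small cap around one of two antipodal directions), arranged so that for \emph{every} pair $(j_{1},j_{2})$ the union $D_{i,j_{1}}\cup D'_{i,j_{2}}$ separates $C_{i}$ from $M\setminus\mathring{C}'_{i}$. Two elements $h_{i},h'_{i}\in\kk$ then carry $E_{j}$ onto $D_{i,j}$ and $D'_{i,j}$ simultaneously for all $j$, so whatever indices the adversary picks, every complement component meeting $C_{i}$ is trapped inside $\mathring{C}'_{i}$ and hence has diameter $<\epsilon$; components missing all the $C_{i}$ lie in $\mathring{D}'\cup(M\setminus N)$, giving the other two alternatives. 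If you want to salvage your write-up, replace "robust single element per ball" by this "two elements per ball, any-pair-separates" design (or an equivalent separation mechanism); as it stands the technical heart you deferred is exactly the missing argument.
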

  \begin{proof}
  Consider first case \ref{conddiff} of \ref{n: G}. We may assume that the $E_{j}$ are mutually disjoint and disjoint from $D$. 
  There exists some $m$-ball $D'$ in $M$ with $D\subseteq\mathring{D}'\subseteq D'\subseteq\nn_{\epsilon}(D)$, as well as some compact codimension $0$ submanifold $N\subseteq M$ such that $D'\subseteq K\subseteq N$. Let $N'=N\setminus\mathring{D}'$. 
  
  Let $\delta=\min\{\epsilon,d(D,M\setminus \mathring{D}')\}$. 
  By compactness, there exist finite collections of $m$-balls $\{C_{i}\}_{i=1}^{r},\{C_{i}'\}_{i=1}^{r}$ in $M$ 
  such that:
  \begin{itemize}
  	\item $C_{i}\subseteq\mathring{C}'_{i}\subseteq M\setminus \mathring{D}$, 
  	\item $N'\subseteq\bigcup_{i=1}^{r}C_{i}$,
  	\item and $diam(C'_{i})<\delta$.
  \end{itemize}
  For $1\leq i\leq r$ choose two families of disjoint $m$-balls $\{E_{i,j}\}_{1\leq j\leq k},\{E_{i,j}'\}_{1\leq j\leq k}$ in  $U_{i} := \mathring{C}'_{i}\setminus C_{i}$ such that
  for any $1\leq j_{1},j_{2}\leq r$ the set $E_{i,j_{1}}\cup E'_{i,j_{2}}$ separates $C_{i}$ from $M\setminus\mathring{C}'_{i}$. This can be done, for instance, by taking a diffeomorphism $\phi: \overline{U}_{i}\to S^{m-1}\times I$, choosing some vector $v_{0}\in S^{1}$, some small $\delta>0$, considering the disks 
  $$K_{j}^{\nu}:=\{(v,f_{\nu,j}(v))\,|\,\norm{v+\nu v_{0}}>\delta\}, \quad f_{\nu,j}(v)=-\frac{\nu\delta j}{3k}+\frac{\norm{v+\nu v_{0}}}{2}$$ 
  for $1\leq j\leq k$, $\nu\in\{1,-1\}$, and then letting $E_{i,j}$ and $E'_{i,j}$ be given by suitable thickenings of  $K_{j}^{1}$ and $K_{j}^{-1}$.
   
  By Fact \ref{f: transitive on disks}, there are $h_{i},h'_{i}\in\kk$
  such that $h_{i}\cdot E_{j}=E_{i,j}$, $h'_{i}\cdot E_{j}=E'_{i,j}$. We claim that the set 
  $\{h_{i},h'_{i}\}_{1\leq i\leq r}$ satisfies the desired properties ($d=2r$).  
  Indeed, fix some arbitrary choice of $j_{i},j'_{i}\in\{1,\dots k\}$, $1\leq i\leq d$ and let $\{\mathcal{C}_{\lambda}\}_{\lambda\in\Lambda}$ be the collection of connected components of $X:=M\setminus\bigcup_{i=1}^{r}(E_{i,j_{i}}\cup E'_{i,j'_{i}})$.
  If $p\in X\cap N'$, then $p\in \mathcal{C}_{\lambda(p)}$ for some $\lambda(p)\in\Lambda$ such that     
  $diam(\mathcal{C}_{\lambda(p)})<\delta$. If we write $Y :=\bigcup_{p\in N'}\mathcal{C}_{\lambda(p)}$, then any connected component of $ X\setminus Y$ is contained in some connected component of the superset $Z:=\mathring{D}\cup (M\setminus N)$ of $X\setminus Y$. Since one of the latter is contained in $D'$ and the other one is disjoint from $K$, the desired conclusion follows.
  
  In case \ref{condhom} of \ref{n: G} we can use an almost identical proof, with the only difference that the existence of $N$ follows from triangulability, $\phi$ is taken to be a homeomorphism and $h_{i},h'_{i}$ can be found using Fact \ref{f: transitive on disks in homeo}.   
  \end{proof}
   
  \begin{remark}
  	We believe one may take $d=m$ in the lemma to be a function of $m=dim(M)$ only.
  \end{remark}

  \begin{lemma}
  \label{l: spread all over}Assume that $\T$ is strictly coarser than $\tc$. Let $D,E_{1},\dots E_{k}$ be $m$-balls in $M$ and $\V\in\nd$. Then for any $\V\in\nd$ there exists $g\in\V$ such that $g\cdot D\cap E_{i}\neq\emptyset$ for all $1\leq i\leq k$.
  \end{lemma}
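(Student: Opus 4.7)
The plan is to argue by contradiction. Suppose some symmetric $\V \in \nd$ has the property that every $g \in \V$ satisfies $g \cdot D \cap E_{i(g)} = \emptyset$ for some index $i(g) \in \{1, \dots, k\}$. The aim is to combine Lemma \ref{l: mixing disks} with Observation \ref{o: generation compact-open topology}: using the mixing lemma I will produce an auxiliary $\T$-neighborhood $\W$ of the identity such that $\W \cdot D$ sits inside a fixed $m$-ball, contradicting the observation.

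A preliminary reduction lets me assume that no $E_i$ is contained in $D$: if some $E_j \subseteq D$, replace $D$ by a sub-ball $D^\sharp \subseteq \mathring{D}$ small enough that $E_i \not\subseteq D^\sharp$ for all $i$, and observe that the conclusion for $(D^\sharp, E_1, \dots, E_k)$ implies it for $(D, E_1, \dots, E_k)$ since $g \cdot D^\sharp \subseteq g \cdot D$. Next, fix a compact submanifold $K \subseteq M$ containing $D \cup \bigcup_i E_i$ in its interior and $\epsilon > 0$ strictly less than the diameter of $D$ and small enough that $\nn_\epsilon(D) \subseteq \mathring{K}$. Applying Lemma \ref{l: mixing disks} yields elements $h_1, \dots, h_d \in \kk$ fixing $D$ such that, for any choice of indices $j_1, \dots, j_d \in \{1, \dots, k\}$, each connected component of $M \setminus \bigcup_l h_l \cdot E_{j_l}$ is contained in $\nn_\epsilon(D)$, has diameter less than $\epsilon$, or is disjoint from $K$. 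Setting $\W := \bigcap_l h_l \V h_l^{-1} \in \nd$, for each $g \in \W$ the conjugate $h_l^{-1} g h_l$ lies in $\V$ and therefore misses some $E_{j_l(g)}$; since $h_l \cdot D = D$, this translates into $g \cdot D \cap h_l \cdot E_{j_l(g)} = \emptyset$ for every $l$. By connectedness, $g \cdot D$ lies in a single component $C_g$ of $M \setminus \bigcup_l h_l \cdot E_{j_l(g)}$, which is of one of the three kinds above.

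The contradiction will follow once I rule out the latter two possibilities for $C_g$ uniformly over $g \in \W$: in that case $\W \cdot D \subseteq \nn_\epsilon(D)$ fits inside an $m$-ball $D'$ with $D \subseteq D'$, directly contradicting Observation \ref{o: generation compact-open topology}. For $g = e$ both pathological cases are excluded by the choice of $\epsilon$ (so that the diameter of $D$ exceeds $\epsilon$) and by $D \subseteq K$. The main obstacle will be propagating this to arbitrary $g \in \W$: using the symmetry of $\V$ I expect to apply the same trichotomy to $g^{-1}$, coupling the constraints on $g \cdot D$ and $g^{-1} \cdot D$, and with $K$ taken suitably large one should exclude both the small-diameter and the outside-$K$ alternatives.
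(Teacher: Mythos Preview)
Your setup through the definition of $\W$ and the trichotomy for $g\cdot D$ matches the paper's argument. The gap is in the final paragraph: the proposed coupling between the trichotomy for $g\cdot D$ and that for $g^{-1}\cdot D$ does not rule out the two bad alternatives. There is no mechanism forcing $g^{-1}\cdot D$ to be large or to meet $K$ just because $g\cdot D$ is small or far away; these conditions on $g\cdot D$ and $g^{-1}\cdot D$ are essentially independent. Enlarging $K$ cannot help either, since $K$ must be fixed \emph{before} applying Lemma~\ref{l: mixing disks}, while $\W$, being a $\T$-neighbourhood with $\T\subsetneq\tc$, will always contain elements pushing $D$ outside any prescribed compact set and elements shrinking $D$ below any prescribed diameter.

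The missing idea, which the paper uses, is to exploit the Hausdorffness of $\T$ via conjugation rather than symmetry. Fix any nontrivial $g^{*}\in\kk$ with $supp(g^{*})\subseteq D$ and choose a symmetric $\V_{0}\in\nd$ with $g^{*}\notin\V_{0}^{3}$; then pick $K$ and $\epsilon$ so that $\V_{K,\epsilon}\subseteq\V_{0}$ and $\nn_{\epsilon}(D)$ is contained in some $m$-ball $D'$. Now work with $\W\cap\V_{0}$ in place of $\W$. If $g\in\W\cap\V_{0}$ satisfies either bad alternative, then $(g^{*})^{g^{-1}}=g g^{*} g^{-1}$ is supported on $g\cdot D$, which either has diameter $<\epsilon$ or misses $K$, so $(g^{*})^{g^{-1}}\in\V_{K,\epsilon}\subseteq\V_{0}$ and hence $g^{*}\in\V_{0}^{3}$, a contradiction. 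Thus every $g\in\W\cap\V_{0}$ satisfies $g\cdot D\subseteq D'$, and Observation~\ref{o: generation compact-open topology} finishes the proof.
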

  \begin{proof}  
   Up to making $D$ smaller, we may assume that $E_{i}\nsubseteq D$ for all $1\leq i\leq k$. 
   Suppose that some $\V\in\nd$ fails to satisfy the property, so that for any $g\in\V$ there is 
  some $1\leq i\leq k$ such that $g\cdot D\cap E_{i}\neq\emptyset$.

  Pick any arbitrary $g^{*}\in\kk\setminus\{1\}$ with $supp(g^{*})\subseteq D$ 
  and let $\V_{0}=\V_{0}^{-1}\in\nd$ be such that $g^{*}\nin\V_{0}^{3}$. Choose some compact set $K\subset M$ and $\epsilon>0$ such that $\V_{K,\epsilon}\subseteq\V_{0}$ and $\nn_{\epsilon}(D)$ is contained in a ball $D'$.
  
  Let $h_{1},\dots h_{d}$ be the elements resulting from applying Lemma \ref{l: mixing disks} to the compact $K$, the real $\epsilon$ and the balls $D,E_{1}\dots E_{k}$ and let $\V_{1}:=\bigcap_{i=1}^{d}\V^{h_{i}^{-1}}$. Consider the intersection $\V_{2}:=\V_{0}\cap\V_{1}$.
  Our assumption on $\V$ and the conclusion of Lemma \ref{l: mixing disks} implies that for any given $g\in\V_{1}$ at least one of the following possibilities holds:
  \begin{itemize}
  \item $g\cdot D\subseteq D'$,
  \item $diam(g\cdot D)<\epsilon$,
  \item or $g\cdot D\cap K=\emptyset$.
  \end{itemize}
  If $g\in\V_{2}\subseteq\V_{1}$ satisfies the second or third possibility above, then $(g^{*})^{g^{-1}}\in\V_{K,\epsilon}\subseteq
  \V_{0}$ and thus $g^{*}\in\V_{0}^{3}$, contradicting the choice of $\V_{0}$. Hence, the first alternative must always hold, which, by Observation \ref{o: generation compact-open topology}, is contrary to the assumption that $\T\subsetneq\tc$.
  \end{proof}
  
  \begin{definition}
  	\label{d: intrusive} We will refer to any group topology $\T$ that is coarser than the compact-open topology and satisfies the conclusion of Lemma \ref{l: spread all over} as intrusive. 
  \end{definition}
    
   \begin{lemma}
  	\label{l: intrusive homeos}In the setting of \ref{condhom} of \ref{n: G} assume that the group topology $\T$ is strictly coarser than $\tc$ on $\gg$. Then for any $0<\epsilon<\frac{1}{2}diam(M)$ and compact $K\subseteq M$ with $\mathring{K}\neq\emptyset$ there is no $\V\in\nd$ such that $\H_{c0}(M)\cap\V\subseteq\V_{K,\epsilon}$ . 
  \end{lemma} 
  \begin{proof}
  	Fix $K$, $\epsilon$ and $\V\in\nd$ as in the statement. Choose $\V_{0}=\V_{0}^{-1}\in\nd$ with $\V_{0}^{3}\subseteq\V$, then $\delta>0$ such that $\V_{\delta}\subseteq\V$ and finally a closed ball $D$ in $M$ which has diameter less than $\delta$ and is contained in $K$. Since $\epsilon<\frac{1}{2}diam(M)$, there is a pair of collared balls $E_{1},E_{2}\subseteq M$ satisfying the following two properties:
  	\begin{itemize}
  		\item $d(E_{1},E_{2})>\epsilon$, where $d$ stands here for the infimum distance,
  		\item $E_{1}\subseteq K$. 
  	\end{itemize}
  	 Lemma \ref{l: spread all over} applied to $\V_{0}$, $D$ and $E_{1},E_{2}$ implies the existence of $g\in\V_{0}$ such that $g\cdot D\cap E_{i}\neq\emptyset$ for $1=1,2$. This condition, together with the two properties listed above, imply the existence of some element $h\in\H(M)$ supported on $g\cdot D$ such that $h\notin\V_{K,\epsilon}$ (for instance, by Fact \ref{f: transitive on disks in homeo}).   
     We have $h\in\H_{c0}(M)$, by Alexander's trick \cite{alexander1923deformation}. 
     Finally, notice that $h^{g}$ is supported on $D$. Since $diam(D)<\delta$, it follows that
     $$h=(h^{g})^{g^{-1}}\in\V_{\delta}^{g^{-1}}\subseteq\V_{0}^{3}\subseteq\V .$$   
  \end{proof}

  The proof of the following lemma involves a series of nested sublemmas and observations. 
  \begin{lemma}
  \label{l: nerve} Let $\gg,\hh,\tt$ be as in \ref{conddiff} of \ref{n: G} and assume furthermore the group topology $\T$ on $\gg$ satisfies the following:   
  \begin{enumerate}[(i)]
  	\item \label{compact case}$\T$ is intrusive (Definition \ref{d: intrusive}),
  	\item \label{non-compact case} one of the following two conditions holds: 
  	    \begin{itemize}
  	    	\item $M$ is compact and small deformations are close to the identity in $\T$ (see Definition \ref{d: small deformations are dense} and Remark \ref{r: small deformations}),
  	    	\item $\T$ is coarser than $\tc$. 
  	    \end{itemize}
  \end{enumerate}
  Then for any $\V\in\nd$ and any compact set $L\subseteq M$ there exists some finite embedded $(m-1)$-dimensional simplicial complex $\Delta$ such that $\PS{\hh}{\Delta}\subseteq\V$ and $L\cup\rl{\Delta}$ is contained in the closure of one of the connected components $U_{0}$ of $M\setminus\rl{\Delta}$. 
  \end{lemma}
  \begin{proof}
  \newcommand{\trr}[0]{\Omega}	
  Take $\V_{0}=\V_{0}^{-1}\in\nd$ with $\V_{0}^{9}\subseteq \V$. We proceed according to the case in the statement.
  \begin{itemize}
  	\item In case (\ref{compact case}) we let $\epsilon>0$ be such that $\w_{7\epsilon}\subseteq\V_{0}$
  	and set $M':=M$.
  	\item In case (\ref{non-compact case}) we pick some compact set $K\subseteq M$ and $\epsilon>0$ such that 
  	$\V_{K,7\epsilon}\subseteq\V_{0}$ and we let $M'\subseteq M$ be some codimension $0$ compact submanifold of $M$ containing $K\cup L$.
  \end{itemize} 
  In both cases we let $\tr$ be a triangulation of $M'$ in which each simplex has diameter less than $\epsilon$. 
     
  \begin{figure}[t]
  	\includegraphics[width=0.90\textwidth]{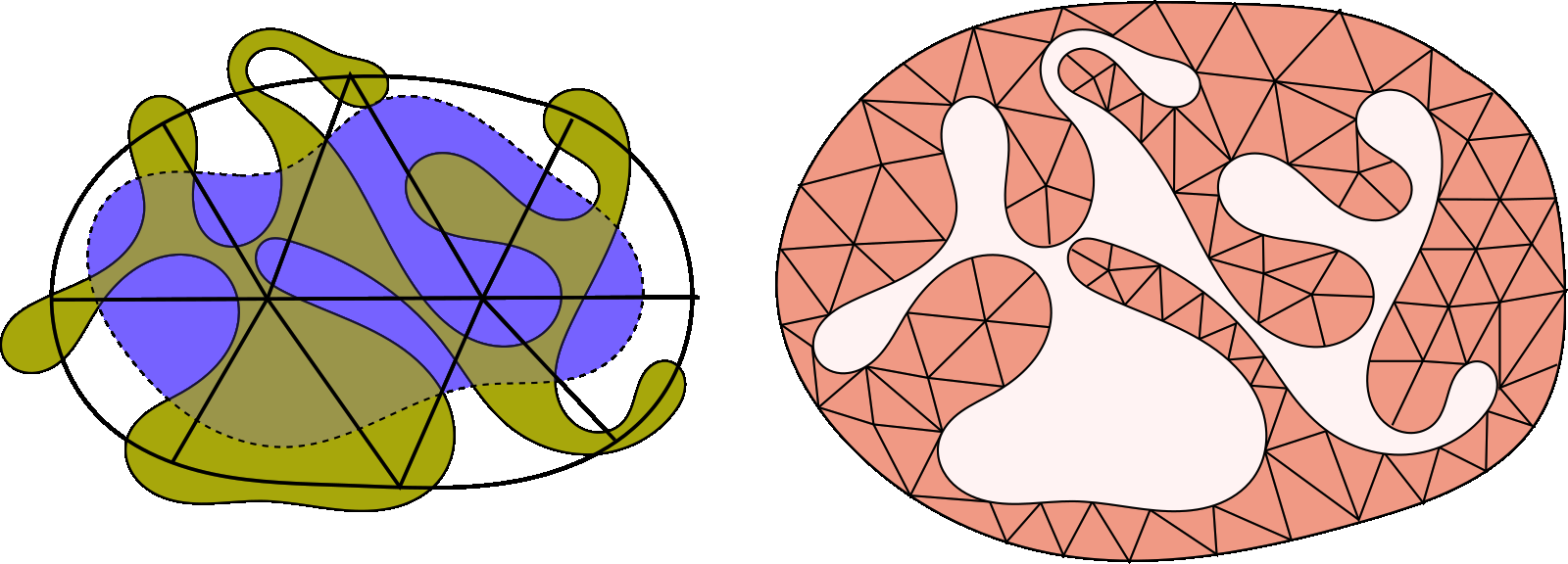}
  	\caption{\label{fig: triangulation}Left: the triangulation $\tr$ of the manifold $M'$ (roughly oval shaped) chosen to contain the set $K\cup L$ (blue area delimited by a dashed line), as well as the ball $g\cdot D$ (yellow). Right: the triangulation $\trr$ of the complement $N'$ of $g\cdot \mathring{D}$ in the submanifold $N\subseteq M$. }	
  \end{figure} 
  
  \newcommand{\A}[0]{\mathcal{A}}
  Fix also some embedded ball $D$ with $diam(D)<\epsilon$. 
  The assumption that $\T$ is intrusive implies that there is
  $g\in\V_{0}$ such that $g\cdot D\cap\mathring{\sigma}\neq\emptyset$ for each $\sigma\in\tr^{m}$. We may also assume $g\cdot\partial D\transv\tau$ for any $\tau\in\tr$, by Lemma \ref{l: wlog transverse}.   
  Let $\A_{1},\dots \A_{r}$ be an enumeration of 
  the connected components of all sets in the collection 
  $$\{\,\sigma\setminus g\cdot \mathring{D}\,|\,\sigma\in\tr^{m}\,\}.$$
  Notice that it is possible that $\sigma\subseteq g\cdot D$ for some $\sigma\in\tr^{m}$. 
  Our transversality assumption implies that for $\sigma\in\tr^{m}$ and $1\leq i\leq r$ we have $\A_{i}\subseteq\sigma$ if and only if $\A_{i}\cap\mathring{\sigma}\neq\emptyset$.   
  For $1\leq i\leq r$ choose $p_{i}\in (g\cdot\partial D)\cap\A_{i}$ and let $\sigma_{i}$ be a simplex of $\tr^{m}$ containing $p_{i}$. Our transversality assumption implies 
  \begin{itemize}
  	\item $\mathring{\A}_{i}$ is connected, 
  	\item $\sigma_{i}$ is uniquely determined by $i$. \footnote{ However, we could have $\sigma_{i}=\sigma_{j}$ for different $1\leq i<j\leq r$.} 
  \end{itemize}  
  Let $N\subseteq M$ be a compact submanifold such that $M'\cup g\cdot D\subseteq int(N)$ (so that $N=M$ in case (\ref{compact case})). 
  Pick $\delta>0$ smaller than each of the following three values: 
  \begin{enumerate}[(a)]
  	\item \label{t1} $\epsilon$,
  	\item \label{t2} $\frac{1}{2}\{d(\sigma,\sigma')\,|\,\sigma,\sigma'\in\tr^{m},\,\sigma\cap\sigma'=\emptyset\}$,
  	\item \label{t3} and $\{d(p_{i},\partial\sigma_{i})\}_{i=1}^{r}$.
  \end{enumerate}
  Let also $\trr$ be a triangulation of $N':=N\setminus g\cdot \mathring{D}$ in which every simplex has diameter less than $\delta$. The general setting is illustrated in Figure \ref{fig: triangulation} above. 
   
	  By perturbing $\trr$ slightly, we may also assume the following property holds: 
	  \begin{equation}
	  	\tag{$\Diamond$}\label{pr:diamond}\text{For any $\sigma$ in $\trr^{m}$ and $1\leq i\leq r$ we have $\sigma\cap\A_{i}\neq\emptyset\leftrightarrow\mathring{\sigma}\cap\mathring{\A_{i}}\neq\emptyset$. }
	  \end{equation}

  \begin{lemma}
     \label{l: small retracting forest}There is some retracting forest $\mathcal{F}=\{(\Gamma_{i},\tau_{i})\}_{i=1}^{r}$ for $\trr$ (see Definition \ref{d: retracting forest}) with the following properties:
  \begin{itemize}
  	\item $diam(|\Gamma_{i}|)<7\epsilon$ for every $1\leq i\leq r$,  
  	\item and $N\subseteq g\cdot D\cup|\mathcal{F}|$. 
  \end{itemize}  	
  \end{lemma}
\begin{subproof}
	
   To begin with, for $1\leq i\leq r$ we choose some $\tau_{i}\in\trr^{m-1}$ such that $\tau_{i}\subseteq g\cdot \partial D\subseteq\partial N'$ and $p_{i}\in\tau_{i}$, and we denote by $\hat{\tau}_{i}$ the unique top-dimensional simplex of $\trr$ containing $\tau_{i}$.  
   We also write 
   $
   \mathscr{V}_{i}:=\{\,\sigma\in\trr^{m}\,|\,\sigma\cap\A_{i}\neq\emptyset\,\}
   $. Clearly, $\hat{\tau_{i}}\in\mathscr{V}_{i}$.
   \begin{observation}
   	\label{o: connectedness} The graph spanned by $\mathscr{V}_{i}$ in $\G(\trr)$ is connected.
   \end{observation}
    \begin{subsubproof}
      Pick some arbitrary $\sigma\in\mathscr{V}_{i}$.  By assumption, $M'\subseteq N$ and therefore $\sigma_{i}\subseteq\bigcup\mathscr{V}_{i}$. By (\ref{pr:diamond}) there is a smooth arc $\alpha$ inside $\mathring{\A_{i}}$ which connects $\sigma$ with a point in the interior of $\hat{\tau}_{i}$, while avoiding $\trr^{(m-2)}$ (take $\alpha$ to be in general position with respect to the simplices of $\trr^{m-2}$). The arc $\alpha$ determines a combinatorial path in $\G(\trr)$ between $\sigma$ and $\hat{\tau}_{i}$ entirely contained in $\mathscr{V}_{i}$. 
    \end{subsubproof}
   
   We construct sets $\mathscr{D}_{i}\subseteq\mathscr{V}_{i}$, $1\leq i\leq r$ as follows. Assuming $\mathscr{D}_{1},\dots\mathscr{D}_{i-1}$ are already given, we take 
   $\mathscr{D}_{i}$ to be the set of vertices in the connected component of $\mathscr{V}_{i}\setminus\bigcup_{l=1}^{i-1}\mathscr{D}_{l}$ that contains $\hat{\tau}_{i}$ (we are referring to the graph induced on $\mathscr{V}_{i}$ on $\G(\trr)$). Notice this set is non-empty by bound (\ref{t3}) on $\delta$.

  \begin{figure}[t]
  	\includegraphics[width=0.55\textwidth]{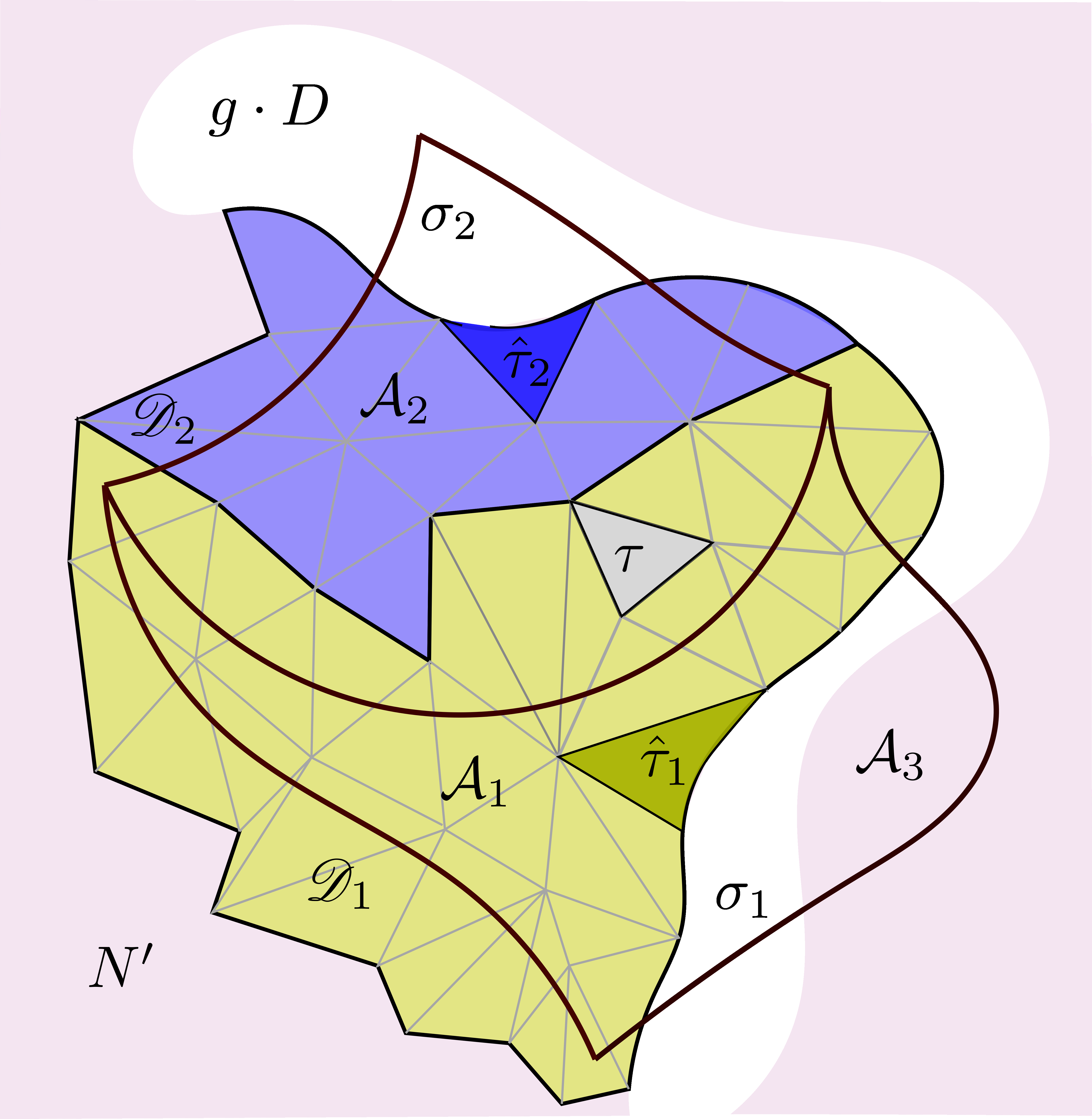}
  	\caption{\label{fig: leftover}The sets $\mathscr{D}_{1}$ (yellow) and $\mathscr{D}_{2}$ (blue), with the two simplices $\hat{\tau}_{1},\hat{\tau}_{2}\in\trr^{m}$ distinguished by their darker color. The simplex $\tau\in\trr^{m}$ is contained in $\sigma_{2}\in\tr^{m}$, however it belongs to $\mathscr{E}_{1}$ rather than $\mathscr{E}_{2}$ at the end of the construction. We have also drawn the simplices $\sigma_{1},\sigma_{2}\in\tr^{m}$ as two large triangles with curved edges (dark red). Notice that here $\sigma_{1}=\sigma_{3}$ but $\sigma_{2}=\sigma_{i}$ only if $i=2$. }
  \end{figure}
   
   \begin{observation}
   	 \label{o: leftover components}For $1\leq i\leq r$ let $\mathfrak{X}_{i}$ be the collection consisting of every $\mathscr{C}\subseteq\G(\trr)$ which is the set of vertices in one of the connected components of  $\mathscr{V}_{i}\setminus\bigcup_{l=1}^{i-1}\mathscr{D}_{l}$ not contained in $\mathscr{D}_{i}$. 
   	 Then for any $\mathscr{C}\in\mathfrak{X}_{i}$ there is an edge in $\G(\trr)$ between some $\tau\in\mathscr{C}$ and some $\tau'\in\mathscr{D}_{j}$ such that $j<i$ and $\sigma_{i}\cap\sigma_{j}\neq\emptyset$ (in particular, $\mathfrak{X}_{1}=\emptyset$).
   \end{observation}
   \begin{subsubproof}
      By Observation \ref{o: connectedness}, there is a path in the graph spanned by $\mathscr{V}_{i}$ in $\G(\trr)$ connecting $\mathscr{C}$ to $\hat{\tau}_{i}$. By construction, for some $1\leq j<i$ such path must intersect the set $\mathscr{D}_{j}\cap\mathscr{V}_{i}$, since otherwise $\sigma$ would have been included in $\mathscr{D}_{i}$ to begin with. Finally, bound (\ref{t2}) for $\delta$ ensures that $\sigma_{i}\cap\sigma_{j}\neq\emptyset$. 
   \end{subsubproof} 
  
   For any $2\leq i\leq r$ and $\mathscr{C}\in\mathfrak{X}_{i}$ we choose some $1\leq j<i$ satisfying the conclusions of Observation \ref{o: leftover components} and denote it by $\lambda(\mathscr{C})$. The set $\mathscr{E}_{j}:=\mathscr{D}_{j}\cup\bigcup_{\lambda(\mathscr{C})=j}\mathscr{C}$ spans a connected subgraph of $\G(\trr)$. As $\Gamma_{j}$ we take any maximal subtree of said subgraph.
   Clearly in case (\ref{non-compact case}) of the statement of the lemma we have
   $$L\cup K\subseteq \rl{\tr}\subseteq\textstyle\bigcup_{j=1}^{r}\mathscr{E}_{j}\subseteq\textstyle\bigcup_{j=1}^{r}\rl{\Gamma_{j}}=|\mathcal{F}|,$$
   and, similarly, $M=|\mathcal{F}|$ in case (\ref{compact case}).
   
   For any $1\leq i<j\leq r$ and $\mathscr{C}\in\mathfrak{X}_{i}$ with $\lambda(\mathscr{C})=j$ we have $\mathscr{C}\subseteq\mathscr{V}_{i}\subseteq\nn_{\delta}(\sigma_{i})$ and $diam(\sigma_{i})<\epsilon$, from which it follows that $diam(\mathscr{C}\cup\sigma_{i})<\epsilon+2\delta$. We also know that
   $\sigma_{i}\cap\sigma_{j}\neq\emptyset$, by Observation \ref{o: leftover components}.
   Since $\mathscr{E}$ is the union of a set contained in $\mathscr{V}_{j}$ and sets of the form $\mathscr{C}$ with $\lambda(\mathscr{C})=j$, and since $\delta<\epsilon$ we conclude: 
   $$diam(\textstyle\bigcup_{\sigma\in\mathscr{E}_{j}}\sigma)<diam(\sigma_{j})+2\max\{\delta,\epsilon+2\delta\}<7\epsilon.$$ 
\end{subproof}
   
  Let $\Delta=\Sp(\mathcal{F})$, where $\mathcal{F}$ is given by Lemma \ref{l: small retracting forest}.
  Denote by $U_{0}$ the connected component of $M\setminus\rl{\Delta}$ containing $g\cdot D$.  
  \begin{lemma}
  \label{l: out of neighbour}For any neighbourhood $W$ of $\rl{\Delta}$ there exists some $h\in\w_{7\epsilon}$ such that $h\cdot (U_{0}\setminus W)\subseteq g\cdot D$.
  \end{lemma}
  \begin{subproof}
    From Fact \ref{f: spine} we get some isotopy $G$ of $N'$ in itself with $G_{-1}=Id_{N'}$ and
    supported on the complement some neighbourhood $V$ of $\rl{\Tsp(\mathcal{F})}$ with $V\subseteq W$
    and such that $G_{1}(\rl{\mathcal{F}})\subseteq W$. 
    Since $diam(|\Gamma_{i}|)<7\epsilon$ for all $i$, the covering diffeotopy 
    $G$ provided by Lemma \ref{l: isotopy extension} can also be taken to be supported on a disconnected union of open sets of diameter less than $7\epsilon$, from which it follows that $h:=G_{1}^{-1}\in\w_{7\epsilon}\subseteq\V_{0}$. 
    We now have $h\cdot (N'\setminus g\cdot D)=h\cdot\rl{\mathcal{F}}\subseteq W$, and so we are done.
  \end{subproof}
 
  In order to verify $\PS{\hh}{\Delta}\subseteq\V$, pick some arbitrary $h\in\PS{\hh}{\Delta}$. Lemma \ref{l: supported away} provides some $g_{0}\in \w_{\epsilon}\subseteq
  \V_{0}$ with the property that $g_{0}h$ equals $\tilde{G}_{1}$ for some compactly supported diffeotopy $\tilde{G}$ of $Id_{M}$ such that $supp(\tilde{G})\cap W=\emptyset$ for some neighbourhood $W$ of $\Delta$, while Lemma \ref{l: out of neighbour}  yields $g_{1}\in\V_{0}$ such that
  $g_{1}\cdot (U_{0}\setminus W)\subseteq g\cdot D$.
 We can now write $(g_{0}h)^{g_{1}^{-1}}=h_{0}h_{1}$,
 where $h_{0}\in\hh$ is diffeotopic to the identity through a diffeotopy supported on $g\cdot D$ and 
 \begin{itemize}
 	\item in scenario (\ref{compact case}) $h_{1}=Id$ (since here $M=|\mathcal{F}|$), 
 	\item in scenario (\ref{non-compact case}) $h_{1}\in\hh$ is supported outside of $K$.\footnote{In fact, also diffeotopic to the identity through a diffeotopy compactly supported on $M\setminus\mathring{U}_{0}$.}
 \end{itemize}

 This also implies that $h_{0}^{g}$ is diffeotopic to the identity through a diffeotopy supported on $D$. Since
   $diam(D)<\epsilon$, we get $h_{0}^{g}\in\w_{\epsilon}$, and thus $h_{0}\in\w_{\epsilon}^{g^{-1}}\subseteq\V_{0}^{3}$. On the other hand, $h_{1}\in\V_{0}$ since it is either trivial or satisfies $supp(h_{1})\cap K=\emptyset$. This concludes the proof of Lemma \ref{l: nerve}, since we can write
 $$
 h=g_{0}^{-1}h_{0}^{g_{1}}h_{1}^{g_{1}}\in\V_{0}^{9}\subseteq\V.
 $$
  
  \end{proof}
  
  \begin{corollary}
  \label{c: control on vertices}Under the assumptions of Lemma \ref{l: nerve}, for any $\V\in\nd$, any open $U\subseteq M$ 
  and any compact submanifold $L\subseteq M$, there exists some embedded simplicial complex 
  $\Delta$ such that:
  \begin{itemize}
  	\item $\PS{\hh}{\Delta}\subseteq\V$,
  	\item $\Delta\transv L$,
  	\item and $\Delta^{(0)}\subseteq U$.
  \end{itemize}
  \end{corollary}
  \begin{proof}
  On the one hand, for any finite set $\mathcal{F}$ of points, any $m$-ball $D$ in $M$ and any
  $\W\in\nd$ there is, by Lemma \ref{l: nerve}, some finite embedded simplicial complex $\Delta$ such that $\PS{\hh}{\Delta}\subseteq \W$ and $\mathcal{F}\cup B\subseteq\overline{U}_{0}$ for some connected component $U_{0}$ of $M\setminus\rl{\Delta}$. For $\epsilon>0$ small enough $\w_{\epsilon}\subseteq\W$ and there exists some $g_{0}\in\w_{\epsilon}$ such that $g_{0}\cdot\mathcal{F}\subseteq U_{0}$. One can also find
   $g_{1}\in\PS{\kk}{\Delta}\subseteq\W$ such that $g_{1}\cdot(g_{0}\cdot\mathcal{F})\subseteq B$, by Fact \ref{f: transitive on disks}. 
  
  On the other hand, for $L\subseteq M$  as above, any finite embedded simplicial complex $\Delta$ and any finite collection of points $\mathcal{F}$ contained in some open set $U$ there exists some $\delta>0$ such that $g\cdot\mathcal{F}\subseteq U$ for any $g\in\V_{\delta}$. By Lemma \ref{l: wlog transverse} for any $\delta>0$ there is $g\in\w_{\delta}$ such that $L\transv g\cdot\Delta$. The result easily follows from all of the above and Lemma \ref{l: nerve} using the continuity of multiplication in $\T$.

  \end{proof}
  
  \section{Rotating cubes and the density of arc compressions} 
  \label{s: compression moves}
   In this section we will work in the setting \ref{conddiff} of \ref{n: G}, and we will furthermore assume that the group topology $\T$ on $\gg$ is strictly coarser than $\tc$, so that Corollary \ref{c: control on vertices} applies.   
   \begin{definition}
    	\label{d: compressions}Given embedded closed subset $D$ of $M$ and $\eta>0$ we write
    	$\cmp{D}{\eta}$ for the group of elements in $\hh$ diffeotopic to the identity through a diffeotopy compactly supported on $\nn_{\eta}(D)$.
      Let $\T$ be a group topology on $\gg$. We say that $\T$ is compressive if 
    	for any embedded $k$-disk $C$ in $M$, $k\leq m-1$, and any $\V\in\nd$ there is some $\eta>0$ such that $\cmp{\eta}{C}\subseteq\V$.       	
     	If the former is true for $k=0$ we say that arc compressions are dense around the identity in $\T$.   
    \end{definition}
    
    The following easily follows from the continuity of multiplication. 
    \begin{observation}
    \label{o: intrusiveness} If arc compressions are dense around the identity in $\T$ (with the usual assumptions), then $\T$ is intrusive. 
    \end{observation}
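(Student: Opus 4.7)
The plan is to construct explicitly, for any $m$-balls $D, E_1, \dots, E_k$ in $M$ and any $\V \in \nd$, an element $g \in \V$ that moves chosen points of $D$ into each $E_i$. First I will pick distinct points $p_i \in \mathring{D}$ and $q_i \in \mathring{E}_i$, and---using that $M$ is connected and $m \geq 2$---join $p_i$ to $q_i$ by pairwise disjoint smooth embedded arcs $\alpha_i$ which by general position may be chosen to avoid the points $\{p_j,q_j\}_{j\neq i}$.

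Next, by continuity of multiplication in $(\gg,\T)$ I take $\W \in \nd$ with $\W^k \subseteq \V$. For each $i$, density of arc compressions applied to $\alpha_i$ yields some $\eta_i > 0$ with $\cmp{\alpha_i}{\eta_i} \subseteq \W$. Shrinking each $\eta_i$ (which only makes $\cmp{\alpha_i}{\eta_i}$ smaller, hence still contained in $\W$) I arrange that the tubes $\nn_{\eta_i}(\alpha_i)$ are pairwise disjoint and that each avoids $\{p_j,q_j\}_{j \neq i}$. Inside each tube, which is connected for small $\eta_i$, Fact \ref{f: transitive on disks} applied to the singletons $\{p_i\}$ and $\{q_i\}$ supplies a compactly supported diffeomorphism of $\nn_{\eta_i}(\alpha_i)$ sending $p_i$ to $q_i$; extending by the identity gives $g_i \in \kk \leq \gg$ supported in $\nn_{\eta_i}(\alpha_i)$, so that $g_i \in \cmp{\alpha_i}{\eta_i} \subseteq \W$.

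Setting $g := g_k g_{k-1}\cdots g_1 \in \W^k \subseteq \V$, the disjointness of supports forces $g_j$ to fix both $p_i$ and $q_i$ whenever $j \neq i$, so an easy induction on the factors gives $g \cdot p_i = q_i$. Hence $q_i \in g \cdot D \cap E_i$ for every $i$, which is precisely the content of intrusiveness.

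I do not expect any real obstacle: the analytic substance already lives in the hypothesis of density of arc compressions, and what remains is commuting-supports bookkeeping. The single point calling for a little care is the simultaneous choice of the $\eta_i$, but since density admits arbitrarily small values of $\eta$, shrinking them to achieve disjointness of the tubes is immediate.
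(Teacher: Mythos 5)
Your argument is correct and is exactly the fleshing-out of what the paper leaves implicit: the paper dismisses Observation \ref{o: intrusiveness} with the phrase ``continuity of multiplication easily implies,'' and your proof (choose $\W^{k}\subseteq\V$, push a point of $D$ into each $E_{i}$ by a point-pushing map supported in a thin tube around a disjoint arc, with each factor lying in $\W$ by density of arc compressions) is precisely the intended routine verification. No discrepancies with the paper's approach.
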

         
  Let $Q :=I^{m}$. We consider the CW-structure on $Q$ (which can be refined to a simplicial structure) in which the cells are the sets of the form $\{\underline{x}\,|\,x_{i_{1}}=\epsilon_{1},x_{i_{2}}=\epsilon_{2},\dots x_{i_{r}}=\epsilon_{r}\}$, where $1\leq i_{1}<i_{2}\dots <i_{r}\leq m$ and $\epsilon_{l}\in\{\pm 1\}$. 
  We suspect that the arguments below are well-known, but we have been unable to find a reference.
  \begin{lemma}
  	\label{l: intersection point} Let $f_{i}:Q\to I,\,\,1\leq i\leq m$ be functions such that
  	$$f_{i}(I^{i-1}\times\{\epsilon\}\times I^{m-i})=\{\epsilon\}$$ for $\epsilon\in\{-1,1\}$ and $1\leq i\leq m$. 
  	Then there is $\underline{x}\in Q$ such that $f_{i}(\underline{x})=0$ for all $i$.
  \end{lemma}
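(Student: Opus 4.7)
The statement is the classical Poincar\'e--Miranda theorem (with the tacit assumption that the $f_i$ are continuous, which is clearly intended). The plan is to argue by contradiction: assume the map $f := (f_1, \ldots, f_m) \colon Q \to Q$ is everywhere nonzero and derive a contradiction with the non-contractibility of $\partial Q \cong S^{m-1}$.

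First I would compose $f$ with the radial retraction $r \colon \mathbb{R}^m \setminus \{0\} \to \partial Q$ defined by $r(v) = v / \lVert v \rVert_{\infty}$, obtaining a continuous map $g := r \circ f \colon Q \to \partial Q$. The key observation is that on the face $F_{i,\epsilon} := I^{i-1} \times \{\epsilon\} \times I^{m-i}$, the hypothesis forces the $i$-th coordinate of $f(\underline{x})$ to equal $\epsilon$; since $|f_j| \leq 1$ for every $j$, this gives $\lVert f(\underline{x}) \rVert_{\infty} = 1$, so in fact $g(\underline{x}) = f(\underline{x})$ on $\partial Q$ and $g$ maps each $F_{i,\epsilon}$ into itself.

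The contradiction then comes from two competing pieces of information about $g|_{\partial Q}$. On one hand, the straight-line homotopy $H(\underline{x}, t) = (1 - t)\, g(\underline{x}) + t\, \underline{x}$ is a homotopy in $\partial Q$ between $g|_{\partial Q}$ and the identity: on $F_{i,\epsilon}$ both $g(\underline{x})$ and $\underline{x}$ have $i$-th coordinate $\epsilon$, so their convex combination stays in $F_{i,\epsilon} \subseteq \partial Q$. On the other hand, since $g$ extends continuously to the contractible space $Q$, the restriction $g|_{\partial Q}$ is null-homotopic in $\partial Q$ via $H'(\underline{x}, t) = g\bigl(t\, \underline{x_0} + (1 - t)\, \underline{x}\bigr)$ for any fixed $\underline{x_0} \in Q$, the straight segment being contained in $Q$ by convexity. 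Combining these homotopies gives a null-homotopy of $\mathrm{id}_{\partial Q}$, contradicting the well-known fact that $\partial Q \cong S^{m-1}$ is non-contractible (for $m \geq 2$; the case $m = 1$ reduces directly to the intermediate value theorem applied to $f_1$).

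There is no serious obstacle in this plan: the result is a classical theorem and the only bookkeeping is checking continuity of the retraction $r \circ f$ and verifying that both homotopies stay within the prescribed faces. One could equivalently phrase the contradiction by observing that $g$ would yield a continuous retraction of $Q$ onto $\partial Q$ (after precomposing with the identity-homotopy above to correct $g|_{\partial Q}$), which is forbidden by Brouwer's theorem; but the homotopy-theoretic formulation above seems to me the most economical.
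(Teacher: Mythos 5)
Your proof is correct. It proves the same statement (the Poincar\'e--Miranda-type result, with the intended continuity hypothesis made explicit) and rests on the same topological input as the paper, namely the nontriviality of $S^{m-1}\cong\partial Q$, but the execution is different and noticeably lighter. You compose $f$ with the $\ell^\infty$ radial retraction at the outset, observe that the face condition forces $g=r\circ f$ to map each face $F_{i,\epsilon}$ into itself, and then play off two homotopies of $g|_{\partial Q}$ inside $\partial Q$: the straight-line homotopy to $\mathrm{id}_{\partial Q}$ (legitimate because convex combinations stay in a common face) against the null-homotopy coming from the extension of $g$ over the convex cube. The paper instead keeps $f$ as a map into $Q\setminus\{\underline 0\}$, corrects it skeleton by skeleton (using the same face-preservation observation to build the homotopy on low skeleta and the homotopy extension property to propagate it) until it is the identity on $Q^{(m-1)}=\partial Q$, and only then retracts $Q\setminus\{\underline 0\}$ onto the boundary, concluding via $H_{m-1}(S^{m-1})\neq 0$. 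Your version avoids the cell-by-cell induction and the homotopy extension property entirely, at no loss of generality; the paper's phrasing is tailored to the cube CW-structure it reuses in the subsequent lemma, but as a proof of this lemma your route is the more economical one, and your separate intermediate-value-theorem remark cleanly covers $m=1$.
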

  \begin{proof}
  	Assume the conclusion fails for some tuple $(f_{i})_{i=0}^{m}$ as in the hypotheses. Consider the function $f:Q\to Q$ given by $f(q):=(f_{i}(q))_{i=1}^{m}$ and let $E$ be a small open ball around $\underline{0}$
  	such that $im(f)\cap E=\emptyset$. 
    By assumption, $f$ is the identity on $Q^{(0)}$ and preserves each of the faces of $Q$.
  	We construct by induction on $0\leq k\leq m-1$ some homotopy $F^{k}:Q^{(k)} \times I\to Q^{(k)}$ such that:
  	\begin{itemize}
  		\item $F^{k}_{-1}=f_{\restriction Q^{(k)}}$,
  		\item $F^{k}_{1}=Id_{Q^{(k)}}$,
  		\item $F^{k}$ preserves each of the faces of $Q^{(k)}$ setwise at every point in time.
  	\end{itemize}
    
    As $F^{0}$ we can simply take $Id_{Q^{(0)}}\times Id_{I}$. Assume now $F^{k}$ is given for some $0\leq k\leq m-2$. By the homotopy extension property applied to $(E,\partial E)$ for each of the $(k+1)$ dimensional faces $E$ of $Q$, there is some homotopy $\tilde{F}^{k}:Q^{(k+1)}\times I\to Q^{(k+1)}$ such that: 
    \begin{itemize}
    	\item $\tilde{F}^{k}_{-1}=f_{\restriction Q^{(k+1)}}$,
    	\item $\tilde{F}^{k}_{\restriction Q^{(k)}\times I}=F^{k}$,
    	\item and $\tilde{F}^{k}$ preserves each of the $(k+1)$-dimensional faces of $Q$ setwise at every point in time.
    \end{itemize}
    
    A homotopy $G^{k+1}:Q^{(k+1)}\times I\to Q^{(k+1)}$ from $\tilde{F}^{k}_{1}$ to $Id_{Q^{(k+1)}}$ is easily seen to exist (for instance, using convex combinations) and we can take as $F^{k+1}$ the concatenation of $\tilde{F}^{k}$ and $G^{k+1}$.
   
   Let $E$ be a small collared ball around $\underline{0}$ such that $f(Q)\cap E=\emptyset$.
   A final application to the homotopy extension property to the pair $(Q\setminus E,\partial Q)$ the starting map $f$ and the homotopy $F^{m-1}$, yields $f':Q\to Q\setminus E$ 
   such that $f'_{\restriction Q^{(m-1)}}=Id_{Q^{(m-1)}}$. This is impossible, since for any retraction $r:Q\setminus E\twoheadrightarrow\partial Q= Q^{(m-1)}$ the map $r\circ f'$ would then witness of the triviality of the $(m-1)$-th homology group of $S^{m-1}$. A contradiction.       	
  \end{proof}

    \newcommand{\F}[0]{\mathcal{F}}
	  One can endow $Q$ with a smooth structure that makes it diffeomorphic to $\overline{B}^{m}$, in such a way that any homeomorphism of $Q$ given by coordinate permutation is also a diffeomorphism. By a smoothed cube we mean an embedding $\phi:Q\to M$ which is a diffeomorphism with respect to this structure. We will refer to the images of the faces of $Q$ as the faces of $\phi$ or $\phi(D)$. In particular, we will write $\F_{i}^{\pm 1}(\phi):=\phi(I^{i-1}\times\{\pm 1\}\times I^{m-i})$. 

  \begin{lemma}
   \label{l: forcing vertices} Let $\phi:Q\cong D\subseteq M$ be a smoothed $m$-cube in $M$. 
  Suppose that $\Delta$ is a finite embedded $(m-1)$-dimensional simplicial complex
  in general position with respect to $\partial D$.
  Then at least one of the following holds:
  \begin{enumerate}[(a)]
  \item \label{option1}$D\cap\Delta^{(0)}\neq\emptyset$,
  \item \label{option2}there exists some $1\leq i\leq m$ and some component $W$ of  $D\setminus\rl{\Delta}$ with
  $$\F_{i}^{1}(\phi)\cap \overline{W}\neq\emptyset, \quad \F_{i}^{-1}(\phi)\cap \overline{W}\neq\emptyset.$$.
  \end{enumerate}	
  \end{lemma}
  \begin{proof}	
  	\newcommand{\ow}[0]{\overline{W}}
  	Write $\F_{i}^{\nu}:=\F_{i}^{\nu}(\phi)$.	Assume for the sake of contradiction neither (\ref{option1}) nor (\ref{option2}) holds. Let 
  	$U_{i}$ be the interior in $D$ of the union of $\overline{W}$ of all the connected components $W$ of 
  	$D\setminus\rl{\Delta}$ such that $\F_{i}^{-1}\cap\overline{W}\neq\emptyset$. 
  	On the one hand, $\F^{-1}_{i}\subseteq U_{i}$, since $D\setminus\rl{\Delta}$ can have only finitely many connected components (this is true locally at every point in $D$, where for the points in $\partial D$ we use the transversality assumption). On the other hand, the negation of (\ref{option2}) implies that $\overline{U}_{i}\cap \F^{1}_{i}=\emptyset$ for all $i$. 
  	Clearly, the frontier $B_{i}$ of $U_{i}$ in $D$ is contained in $\rl{\Delta}$.  
  	
  	By applying the strong version of Urysohn's lemma (\cite{munkrestopology}, Ex 33.5) 
  	to $\overline{U_{i}}$ and $\overline{D\setminus U^{i}}$, one obtains continuous functions $f_{i}:D\to I$ such that $f_{i}(\F_{i}^{\lambda})=\{\lambda\}$ for $\lambda\in\{1,-1\}$, and  
  	\begin{equation}
  		\label{eq: intersection}\tag{$\blackdiamond$} B_{i}=f_{i}^{-1}(0)=\overline{f_{i}^{-1}([-1,0))}\cap \overline{f_{i}^{-1}((0,1])}.
  	\end{equation}
  	Lemma \ref{l: intersection point} applied to the map $(f_{i}\circ\phi)_{i=1}^{m}:Q\to Q$ implies $\bigcap_{i=1}^{m}B_{i}\neq\emptyset$.
   \newcommand{\C}[0]{\mathcal{C}}
  	We also have the following.
  	\begin{observation}
  		\label{l: nice boundary} For every simplex $\sigma\in\Delta$ and every connected component $\mathcal{C}$ of $\sigma\cap \mathring{D}$
  		either $\mathcal{C}\cap B_{i}=\emptyset$ or $\mathcal{C}\subseteq B_{i}$, and thus $\overline{\mathcal{C}}\subseteq B_{i}$. 
  	\end{observation}  	
    \begin{subproof}
    
    	Let $\C_{i}^{-1}:=\mathcal{C}\cap \overline{f_{i}^{-1}([-1,0))}$ and $\C_{i}^{1}:=\mathcal{C}\cap \overline{f_{i}^{-1}((0,1])}$. Clearly, $\C_{i}^{\pm 1}$ is closed in $\C$. 
    	We also claim that it is open. Indeed, since $B_{i}$ is contained in a codimension $1$ embedded simplicial complex, any point $p\in\mathring{D}\cap\C$ admits a neighbourhood $V$ in $\mathring{D}$ such that $V\setminus B_{i}$ consists of finitely many components $\mathcal{E}_{1},\dots \mathcal{E}_{k}$ with the property that $\C\cap V\subseteq\mathcal{E}_{l}$ for any $1\leq l\leq k$. By construction, the sign of $f_{i}$ is constant on each $\mathcal{E}_{l}$. If $p\in\C_{i}^{\epsilon}$, then this sign must be $\epsilon$ for at least one value of $l$, from which it follows that $\mathcal{C}\cap V\subseteq \C_{i}^{\epsilon}$. We conclude that for $\epsilon\in\{\pm 1\}$ either $\C_{i}^{\epsilon}=\C$ or $\C_{i}^{\epsilon}=\emptyset$. If we assume  $\C\cap B_{i}\neq\emptyset$, then by (\ref{eq: intersection}) we have $\C_{i}^{1},\C_{i}^{-1}\neq\emptyset$, from which it follows that $\C\subseteq \overline{f_{i}^{-1}([-1,0))}\cap \overline{f_{i}^{-1}((0,1])}=B_{i}$, as needed.   
    \end{subproof}
    
  	It follows from the negation of (\ref{option1}), the fact that $\bigcap_{i=1}^{m}B_{i}\neq\emptyset$ and the observation above that there must be some positive-dimensional simplex $\sigma$ of $\Delta$ and some connected component $\mathcal{C}$ of 
  	$\mathring{D}\cap\sigma$ such that $\mathcal{C}\subseteq\bigcap_{i=1}^{m}B_{i}$. 
  	Then necessarily $\overline{\mathcal{C}}\cap\partial D\neq\emptyset$ (a path in $\sigma$ from $\mathcal{C}$ to a vertex of $\sigma$ eventually hits $\partial D$). Therefore, 
  	$\overline{\mathcal{C}}\cap(\F^{1}_{i}\cup \F^{-1}_{i})\neq\emptyset$ for some $1\leq i\leq m$, which is impossible, since $\overline{\mathcal{C}}\subseteq B_{i}\subseteq f_{i}^{-1}(0)$. 
  \end{proof}
  
  Given a smooth arc $\alpha$ and an open set $U$ containing $\alpha$, let $\mathcal{A}_{U}(\alpha)$ be the collection of arcs $\beta$ in $U$ such that $\alpha(s)=\beta(s)$ for $s\in\{-1,1\}$.
  
  \begin{lemma}
  \label{l: many point pushing maps}Let $\alpha$ be a smooth arc and $U$ an open set containing $\alpha$. Then for any $\V\in\nd$ there exists some smooth arc $\beta\in\mathcal{A}_{U}(\alpha)$ and some neighbourhood $V$ of $\beta$ such that $\hh[V]\subseteq\V$. 
  \end{lemma}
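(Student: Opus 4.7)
The plan is to construct a finite codimension-$1$ embedded simplicial complex $\Delta\subseteq M$ together with a smooth arc $\beta\in\mathcal{A}_U(\alpha)$ and an open tubular neighborhood $V$ of $\beta$ such that $\PS{\gg}{\Delta}\subseteq\V$ and $\bar V\cap\rl{\Delta}=\emptyset$. Given this, any $g\in\gg[V]$ is the identity on the open set $M\setminus\bar V$, which is an open neighborhood of both $\rl{\Delta}$ and $\rl{\Delta^{(m-2)}}$, so $g\in\PS{\gg}{\Delta}\subseteq\V$, as required.

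To build $\Delta$, pick $\V_0=\V_0^{-1}\in\nd$ with $\V_0\subseteq\V$ and cover $\alpha$ by a finite chain of smoothed $m$-cubes $D_1,\ldots,D_r\subseteq U$ with $\alpha(-1)\in\mathring{D}_1$, $\alpha(1)\in\mathring{D}_r$, arranged so that each intersection $D_j\cap D_{j+1}$ contains the interior of a smoothed $m$-cube $E_j$ whose faces are in general position with respect to those of $D_j$ and $D_{j+1}$. Apply Corollary \ref{c: control on vertices} with transversality imposed against every face of every $D_j$ and every $E_j$, and with the prescribed open set (containing $\Delta^{(0)}$) chosen disjoint from $\bigcup_j D_j$. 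This yields $\Delta$ with $\PS{\gg}{\Delta}\subseteq\V_0$, transverse to every face of every cube, and with $\Delta^{(0)}\cap \bigcup_j D_j=\emptyset$. Since no vertex of $\Delta$ lies in any $D_j$ or $E_j$, Lemma \ref{l: forcing vertices} produces for each cube a component $W_j$ of $D_j\setminus \rl{\Delta}$ and $W_j'$ of $E_j\setminus\rl{\Delta}$ each meeting some pair of opposite faces of the respective cube.

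The main technical obstacle is chaining these components into a single open path-connected region $\Omega_0\subseteq(\bigcup_j D_j)\setminus\rl{\Delta}$ whose closure contains $\alpha(\pm 1)$: Lemma \ref{l: forcing vertices} does not specify which pair of opposite faces is witnessed by each component, so a priori $W_j$ and $W_{j+1}$ might lie in different components of $(D_j\cup D_{j+1})\setminus\rl{\Delta}$. This is handled by enlarging the overlap cubes $E_j$ so that, by a component-counting argument using transversality, the witness $W_j'$ in $E_j$ is forced to coincide with $W_j\cap E_j$ and with $W_{j+1}\cap E_j$; concretely, any component of $E_j\setminus\rl{\Delta}$ lies inside exactly one component of $D_j\setminus\rl{\Delta}$, and choosing $E_j$ thick enough ensures $W_j'$ identifies the same component as $W_j$ (and likewise for $W_{j+1}$).

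Finally, take $\beta$ to be any smooth arc in $\Omega_0\cup\{\alpha(\pm 1)\}$ with $\beta(\pm 1)=\alpha(\pm 1)$; since $\alpha(\pm 1)$ lie in the interior of $D_1$ and $D_r$ (and away from $\rl{\Delta}$ by transversality of $\Delta$ with small chosen faces through the endpoints), such an arc exists and can be made smooth. A sufficiently thin open tubular neighborhood $V$ of $\beta$, small enough that $\bar V\subseteq\Omega_0$ (using compactness of $\beta$ and openness of $\Omega_0$), then satisfies $\bar V\cap\rl{\Delta}=\emptyset$, completing the construction.
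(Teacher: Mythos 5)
There is a genuine gap, and it is exactly at the point you flag as "the main technical obstacle". Lemma \ref{l: forcing vertices} only tells you that some component of $D_j\setminus\rl{\Delta}$ joins \emph{some} pair of opposite faces of the cube, and nothing in your construction lets you control \emph{which} pair. Your fix --- "enlarging the overlap cubes $E_j$" and "choosing $E_j$ thick enough" --- cannot work as stated: the cubes $D_j,E_j$ must be fixed \emph{before} $\Delta$ is produced by Corollary \ref{c: control on vertices} (the transversality requests are made against their faces), and the only constraints on $\Delta$ are its stabilizer lying in $\V_0$, transversality, and the location of its vertices. There is no metric or combinatorial control whatsoever forcing the crossing component of $E_j\setminus\rl{\Delta}$ to sit in the same component of $D_j\setminus\rl{\Delta}$ as $W_j$, nor forcing any $W_j$ to run in the direction of $\alpha$; in particular each $W_j$ could join a pair of faces transverse to the arc, so the region $\Omega_0$, even if it were connected, need never come near $\alpha(\pm 1)$. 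Relatedly, your final step silently assumes $\alpha(\pm1)\in\bar{W}_1\cup\bar{W}_r$ (so that a smooth $\beta$ with $\beta(\pm1)=\alpha(\pm1)$ exists inside $\Omega_0$), but the endpoints, even if they avoid $\rl{\Delta}$, may lie in entirely different components of $D_1\setminus\rl{\Delta}$ and $D_r\setminus\rl{\Delta}$.

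The paper resolves both issues with two conjugation tricks you are missing. First, to pin down the distinguished pair of faces it symmetrizes: using Fact \ref{f: transitive on disks} one takes $g_i$ preserving the single cube $D\supseteq\alpha$ and carrying $F_i^{\pm1}$ to $F_1^{\pm1}$, and works with $\V^{*}=\V\cap\bigcap_{i\ge2}\V^{g_i}$; if no admissible $\Delta$ ever produced a component joining $F_1^{-1}$ to $F_1^{1}$, then for complexes with stabilizer in $\V^{*}$ \emph{every} pair of opposite faces would be separated, and Lemma \ref{l: forcing vertices} would force a vertex of $\Delta$ inside $D$, contradicting Corollary \ref{c: control on vertices}. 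Second, the arc obtained inside the good component ends at arbitrary points of $F_1^{\pm1}$, not at $\alpha(\pm1)$; the paper chooses small balls $C_{\pm1}\supseteq F_1^{\pm1}$ of diameter $<\epsilon$ and conjugates by an element supported in $C_{-1}\cup C_1$ (hence in $\V_\epsilon\subseteq\V_0$) to match the endpoints, which is why the budget is $\V_0^{3}\subseteq\V$ rather than your $\V_0\subseteq\V$. (Incidentally, the chain of cubes is unnecessary: a closed tubular neighbourhood of the arc is a single smoothed cube with $\alpha(\pm1)$ on opposite faces, which is what the paper uses.) Without these two ingredients the proposal does not go through.
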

  \begin{proof}
  We first claim that for every smoothed $m$-cube $\phi:Q\to D$ and every $\V_{0}\in\nd$ 
  there exists some finite embedded simplicial complex $\Delta\subseteq M$ such that:
  \begin{itemize}
  	\item  $\partial D\transv\Delta$,
  	\item  $\PS{\hh}{\Delta}\subseteq\V_{0}$,
  	\item  and the closure of some connected component of $D\setminus\rl{\Delta}$ intersects both $\F^{1}_{1}(\phi)$ and $\F^{-1}_{1}(\phi)$. 
  \end{itemize}
  
  Write $\F_{i}^{\nu}:=\F_{i}^{\nu}(\phi)$. If the conclusion does not hold, then there exists some $\V_{0}\in\nd$ with the property that every finite embedded simplicial complex $\Delta$ satisfying the first two conditions with respect to $\V_{0}$ must fail to satisfy the third.
  
  By Fact \ref{f: transitive on disks}, for $i\geq 2$ there exists $g_{i}\in\G$ which preserves $D$ setwise and maps $\F_{i}^{\nu}$ onto $\F^{\nu}_{1}$ for $\nu\in\{-1,1\}$. If we let $\V^{*}:=\V_{0}\cap\bigcap_{i=2}^{m}\V_{0}^{g_{i}}$, then for every finite embedded simplicial complex $\Delta$ satisfying the first two conditions there cannot be any $1\leq i\leq m$ for which $\F_{i}^{1}$ and $\F^{-1}_{i}$ both intersect the closure of the same component of $D\setminus\rl{\Delta}$. By Lemma \ref{l: forcing vertices}, we must have $\Delta^{0}\cap D\neq\emptyset$, contradicting Corollary \ref{c: control on vertices}.   
  
  To conclude, let $\alpha,U$ and $\V$ be as in the hypotheses of the lemma. Pick $\V_{0}=\V_{0}^{-1}\in\nd$ with $\V_{0}^{3}\subseteq\V$ and let $\epsilon>0$ be such that $\V_{\epsilon}\subseteq\V_{0}$. One can find some smoothed $m$-cube with image 
  $D\subseteq U$ and embedded balls $C_{-1},C_{1}$ of diameter less than $\epsilon$  
  such that:
  \begin{itemize}
  	\item $\alpha\subseteq D\subseteq U$,
  	\item $\alpha(\nu)\in \F_{1}^{\nu}$ for $\nu\in\{-1,1\}$, 
  	\item $\F^{\nu}_{1}\subseteq C_{\nu}\subseteq U$ for $\nu\in\{-1,1\}$,
  \end{itemize} 
  The discussion at the beginning of the proof implies the existence of an arc $\beta: I\to D$ with $\beta(\epsilon)\in C_{\epsilon}$ for $\epsilon\in\{\pm 1\}$ and some neighbourhood $V$ of $\beta$ such that $\hh[V]\subseteq\V_{0}$. Conjugating $\hh[V]$ by $h^{-1}$ for a suitable element $h\in\V_{0}$ supported on $U$ and satisfying $h\beta(\epsilon)=\alpha(\epsilon)$ for $\epsilon\in\{\pm 1\}$ yields the desired result.
  \end{proof}

  \begin{corollary}
  	\label{c: 0-slices}If $m\neq 3$, then arc compressions are dense in any Hausdorff group topology $\T$ on $\gg$ with $\T\subsetneq\tc$.
  \end{corollary}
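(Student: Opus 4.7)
The plan is, given an arc $\alpha$ in $M$ and $\V \in \nd$, to produce $\eta > 0$ with $\gg[\nn_\eta(\alpha)] \subseteq \V$. The strategy is to apply Lemma \ref{l: many point pushing maps} to obtain an arc $\beta$ with the same endpoints as $\alpha$ and a compression neighbourhood $V \supseteq \beta$, then conjugate by an ambient diffeomorphism $h \in \gg$ satisfying $h(\alpha) = \beta$ that is itself close enough to the identity in $\T$ that the conjugation does not destroy the compression.

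Concretely, I would first pick $\V_0 = \V_0^{-1} \in \nd$ with $\V_0^{3} \subseteq \V$, and, using $\T \subseteq \tc$, some $\epsilon > 0$ with $\V_\epsilon \subseteq \V_0$. For a $\delta > 0$ to be chosen later, I apply Lemma \ref{l: many point pushing maps} to $\alpha$, the thin tube $U := \nn_\delta(\alpha)$, and $\V_0$, producing $\beta \in \mathcal{A}_U(\alpha)$ and an open set $V \subseteq U$ containing $\beta$ with $\gg[V] \subseteq \V_0$ (shrinking $V$ if necessary).

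The key geometric input, which is where the hypothesis $m \neq 3$ enters, is that $\alpha$ and $\beta$, both sitting inside the thin tube $U$ and sharing endpoints, are connected by an ambient isotopy supported in $\nn_{2\delta}(\alpha)$ whose tracks have length $O(\delta)$. For $m = 2$ this rests on the Jordan--Schoenflies theorem applied inside the two-dimensional strip $U$, which exhibits $\beta$ together with a portion of $\alpha$ as the boundary of an embedded disk that may be squashed onto $\alpha$. For $m \geq 4$ it is the classical unknotting of arcs in codimension at least three: the tube $U$ is simply connected, so $\beta$ is homotopic to $\alpha$ rel endpoints, and general position promotes this to an ambient isotopy confined to a slightly larger tube. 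In dimension $3$ both arguments fail because arcs can be genuinely knotted inside a thin tube. Extending this isotopy to $M$ via Lemma \ref{l: isotopy extension}, with the speed bound in item (\ref{slow speed}) of its statement, yields $h \in \D_{c0}(M)$ with $h(\alpha) = \beta$ and $h \in \V_{C\delta}$ for some constant $C$ depending only on the tube; choosing $\delta < \epsilon/C$ then places $h$ in $\V_\epsilon \subseteq \V_0$.

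To conclude, by continuity of $h$ and compactness of $\alpha$, some $\eta \in (0, \delta)$ satisfies $h(\nn_\eta(\alpha)) \subseteq V$. For any $g \in \gg[\nn_\eta(\alpha)]$, the support of $hgh^{-1}$ equals $h(supp(g)) \subseteq V$, so $hgh^{-1} \in \gg[V] \subseteq \V_0$; writing $g = h^{-1}(hgh^{-1})h$ and using $h, h^{-1} \in \V_0$ yields $g \in \V_0^{3} \subseteq \V$. I expect the main obstacle to lie in producing a quantitative unknotting isotopy whose displacement is bounded by $O(\delta)$ rather than by the diameter of the whole tube (a subtlety caused by the fact that the $\beta$ supplied by Lemma \ref{l: many point pushing maps} is otherwise out of our control); this is precisely what forces the exclusion of dimension $3$ from Theorem \ref{t: main}.
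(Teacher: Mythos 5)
Your reduction to conjugation is fine, but the step you yourself flag as ``the main obstacle'' is a genuine gap, and it cannot be repaired in the form you propose: there is in general \emph{no} diffeomorphism $h$ with $h(\alpha)=\beta$ lying in $\V_{C\delta}$, no matter how the unknotting isotopy is chosen. Lemma \ref{l: many point pushing maps} gives you no control over $\beta$ beyond $\beta\subseteq U=\nn_{\delta}(\alpha)$ and the endpoints; in particular $\beta$ may run from one end of the tube almost to the other, double back, and return. Any $h$ carrying $\alpha$ onto $\beta$ parametrizes $\beta$ by $t\mapsto h(\alpha(t))$, and comparing the position of $\alpha(t)$ along the tube with that of $h(\alpha(t))$ shows, by an intermediate-value argument, that some point of $\alpha$ is displaced by a definite fraction of the length of $\alpha$ -- a bound independent of $\delta$. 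So ``$h\in\V_{\epsilon}\subseteq\V_{0}$'' is unattainable, and this failure has nothing to do with dimension $3$: it occurs in every dimension, whereas the exclusion of $m=3$ in the paper is only about the \emph{existence} of an ambient isotopy rel the relevant constraints (knotting), not about its metric size. Since metric smallness is the only mechanism your argument has for placing the conjugator in $\V_{0}$, the proof does not go through.

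The paper circumvents exactly this point by making the conjugator small in $\T$ \emph{structurally} rather than metrically. It first uses Corollary \ref{c: control on vertices} to produce a finite $(m-1)$-complex $\Delta$ transverse to $\alpha$ with $\PS{\gg}{\Delta}\subseteq\V_{0}$, then builds $\beta$ so that it meets $\Delta$ in the same points and order as $\alpha$, together with an element $h\in\V_{0}$ (a bounded product of arc compressions from Lemma \ref{l: many point pushing maps} and uniformly small elements) carrying an initial subarc of $\beta$ onto $\beta$. The hypothesis $m\neq 3$ is used only to find $g$ with $g\cdot\alpha=\beta$ \emph{inside} $\PS{\gg}{\Delta}$ -- so $g\in\V_{0}$ automatically, however large its displacement -- and then $h^{g}\cdot\Delta$ is disjoint from $\alpha$, so that a whole neighbourhood $W$ of $\alpha$ satisfies $\gg[W]\subseteq\PS{\gg}{\Delta}^{(h^{g})^{-1}}\subseteq\V_{0}^{7}\subseteq\V$. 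In other words, the compression neighbourhood of $\alpha$ is obtained as a conjugated stabilizer of $\Delta$, not as a conjugate of $\gg[V]$ by a metrically small map; without an analogue of $\Delta$ your approach has no source of $\T$-smallness for the straightening map, and that is precisely what is missing.
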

  \begin{proof}
  	 Let $\V\in\nd$ and choose $\V_{0}=\V_{0}^{-1}\in\nd$ such that $\V_{0}^{7}\subseteq\V$. 
  	 Let $\alpha$ be an arc in $M$. Lemma \ref{c: control on vertices} implies that there exists some finite embedded $(m-1)$-dimensional simplicial complex
  	 $\Delta$ with $\alpha\transv \Delta$, does not contain its endpoints and satisfies  
  	 $\PS{\hh}{\Delta}\subseteq\V_{0}$. 
  	 The intersection of $\alpha$ with $\Delta$ consists of finitely many points $p_{1},\dots p_{r}$, traversed in that order by $\alpha$, where $p(t_{i})=p_{i}$. Extend this notation by letting $t_{0}=-1$ and $t_{r+1}=1$. 	   	 
  	 Condition $\alpha\transv \Delta$ and the inverse function theorem implies the existence of a neighbourhood $U$ of $\alpha$ and a diffeomorphism $\phi: U\cong (0,1)\times B^{m-1}$ exists mapping $\alpha$ onto the set $(0,1)\times\{\underline{0}\}$ and $U\cap|\Delta^{(m-1)}|=\emptyset$ to the union of the sets $\{t_{i}\}\times B^{m-1}$. 
 	  
  	 We claim that there is an arc $\beta\in\mathcal{A}_{U}(\alpha)$ intersecting $\Delta$ only at the points 
  	 $p_{1},\dots p_{r}$, in that order, and some element $h\in\V_{0}$ mapping an initial subarc of 
  	 $\beta$ in $U\setminus\rl{\Delta}$ onto $\beta$. 	   
  	 Take $\V_{1}=\V_{1}^{-1}\in\nd$ such that $\V_{1}^{2r}\subseteq\V_{0}$ and let $\delta>0$ be such that $\V_{\delta}\subseteq\V_{1}$ and $2\delta<\min_{0\leq i\leq r-1}d(p_{i},p_{i+1})$. For each $1\leq i\leq r$ let $q_{i}=\alpha(s_{i}),q'_{i}=\alpha(s'_{i})$, where 
  	 $s_{i}\in(t_{i-1},t_{i})$ and $s'_{i}\in(t_{i},t_{i+1})$ and $q_{i},p_{i},q'_{i}$ are in the interior of an $m$-ball of diameter less than $\delta$. 
  	 By Lemma \ref{l: many point pushing maps} there are arcs $\gamma_{i}$, $0\leq i\leq r$ in $U\setminus\rl{\Delta}$, where
  	 \begin{itemize}
  	 	\item $\gamma_{0}$ joins $p_{0}$ to $q_{1}$,
  	 	\item $\gamma_{i}$ joins $q'_{i}$ to $q_{i+1}$ for $1\leq i\leq r$,
  	 	\item and $\gamma_{r}$ joins $q'_{r}$ to $p_{r+1}$,
  	 \end{itemize}
  	 as well as neighbourhoods $W_{i}$ of $\gamma_{i}$ such that
  	 $\kk[W_{i}]\subseteq\V_{1}$.  
  	 Using Fact \ref{f: transitive on disks} one easily finds $h_{0}\in \kk[W_{0}]$ mapping an initial subarc of $\gamma_{0}$ onto $\gamma_{0}$, then $h'_{0}\in\V_{\delta}$ mapping $\gamma_{0}$ onto 
  	 some arc from $p_{0}$ to $q_{1}$ of the form $\gamma_{0}*\delta_{0}$, where $\delta_{0}$ is a sub-arc of $\alpha$ with $p_{0}$ in its interior. Similarly, we can find $h_{1}\in\kk[W_{1}]$ such that the image of $h'_{0}h_{0}\cdot\gamma_{0}$ is of the form $\gamma_{0}*\delta_{0}*\gamma_{1}$ and so forth.
  	 If we let 
  	 $$\beta:=\gamma_{0}*\delta_{0}*\gamma_{1}*\delta_{1}\cdots\gamma_{r+1},\quad\quad h:=h_{r+1}h'_{r}h_{r}\cdots h'_{0}h_{0}\in\V_{1}^{2r}\subseteq\V_{0},$$ then $h$ will map an initial subsegment of $\beta$ onto $\beta$.
  	 
  	 To conclude, we use the assumption $dim(M)\neq 3$ to find some $g\in\PS{\hh}{\Delta}$ fixing the endpoints of $\alpha$ such that $\beta=g\cdot\alpha$. Then the element $h^{g}$ maps some initial subarc of $\alpha$ disjoint from 
  	 $\Delta$ onto $\alpha$. We have
  	 $$
  	 (h^{g}\cdot\Delta)\cap\alpha=(g^{-1}h\cdot\Delta)\cap\alpha=g^{-1}h\cdot(\Delta\cap(g\cdot \alpha))=g^{-1}h\cdot(\Delta\cap h^{-1}\cdot\beta)=\emptyset.
  	 $$
  	 And we are done, since for some neighbourhood $W$ of $\alpha$ we have 
  	 $$\kk[W]\subseteq\PS{\hh}{\Delta}^{(h^{g})^{-1}}\subseteq\V_{0}^{7}\subseteq\V.$$
  	 
  \end{proof}

\section{Completing the proof of Theorem \ref{t: main}}
  \label{conclusion}  
        In this section we keep working with $M,\gg,\hh,\T$ as in case \ref{conddiff} of Notation \ref{n: G}.
        We start the section with a lemma that allows us to ignore the lower-dimensional skeleton of embedded simplicial complexes.
        \begin{lemma}
        	\label{l: cleaning}Let $C\subseteq M$ be an $m$-ball and $\Delta\subseteq M$ a finite embedded simplicial complex of dimension $(m-1)$. If $\T$ is compressive and small deformations are close to the identity in $\T$, then for any $\mathcal{V}\in\nd$ there exists some $g\in\mathcal{V}$ such that $g\cdot\rl{\Delta^{(m-2)}}\cap C=\emptyset$. 
        \end{lemma}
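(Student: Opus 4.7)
The plan is to push the simplices of $\Delta^{(m-2)}$ off $C$ one at a time by suitable compressions. Enumerate them as $\sigma_1,\dots,\sigma_N$ in non-decreasing order of dimension $k_i\leq m-2$ and fix $\V_0=\V_0^{-1}\in\nd$ with $\V_0^N\subseteq\V$. Inductively, I will produce $h_i\in\V_0$ so that $g_i:=h_i\cdots h_1$ satisfies $g_i\cdot\sigma_j\cap C=\emptyset$ for all $j\leq i$; the element $g:=g_N\in\V_0^N\subseteq\V$ will then witness the conclusion.

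For the inductive step, write $\tau:=g_{i-1}\cdot\sigma_i$, an embedded $k_i$-disk of codimension $m-k_i\geq 2$ in $M$. I would construct a $k_i$-cylinder $\iota\colon\bar B^{k_i}\times I\hookrightarrow M$ with image $D$ such that $\iota(\bar B^{k_i}\times\{-1\})=\tau$ and $\iota(\bar B^{k_i}\times\{1\})\subseteq M\setminus C$. Compressivity of $\T$ provides $\eta_i>0$ with $\gg[\nn_{\eta_i}(D)]\subseteq\V_0$, and Fact~\ref{f: transitive on disks} supplies an $h_i\in\gg[\nn_{\eta_i}(D)]$ acting on $D$ as a cylinder-preserving slide that maps the $-1$ end onto the $+1$ end and carries the inner half $\iota(\bar B^{k_i}\times[-1,0])$ into the outer half $\iota(\bar B^{k_i}\times[0,1])\subseteq M\setminus C$.

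What remains is to verify that $h_i$ leaves $g_{i-1}\cdot\sigma_j$ disjoint from $C$ for all $j<i$. A simplex $\sigma_j$ disjoint from $\sigma_i$ in $\Delta$ can be arranged, by a general position perturbation via Lemma~\ref{l: wlog transverse} and by shrinking $\eta_i$, to satisfy $\nn_{\eta_i}(D)\cap g_{i-1}\cdot\sigma_j=\emptyset$, whence $h_i$ fixes it. If $\sigma_j$ meets $\sigma_i$, the simplicial structure forces $\sigma_i\cap\sigma_j=\sigma_l$ for some simplex $\sigma_l$ in $\Delta^{(m-2)}$ with $l\leq j<i$; the inductive hypothesis gives $g_{i-1}\cdot\sigma_l\subseteq\tau\setminus C$, and we arrange $D$ so that $\nn_{\eta_i}(D)\cap g_{i-1}\cdot\sigma_j$ lies in an arbitrarily small neighbourhood of $g_{i-1}\cdot\sigma_l$ inside the inner half of $D$. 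The slide then transports this intersection into the outer half of $D$, which by construction lies outside $C$.

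The main obstacle is the simultaneous geometric control required of $D$ in the inductive step: it must push $\tau$ out of $C$, meet each $g_{i-1}\cdot\sigma_j$ only in a thin inner strip around the shared face, and support a slide carrying that strip into the outer strip without any stray points of $g_{i-1}\cdot\sigma_j$ being sent back into $C$ through the return flow in the normal directions. This is a delicate construction combining the transversality tools of Section~\ref{s: preliminaries} with the combinatorics of simplicial complexes and the freedom in choosing both the cylinder and the compression provided by Fact~\ref{f: transitive on disks}.
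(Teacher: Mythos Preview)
Your inductive scheme has a genuine gap at the general–position step. You assert that for $\sigma_j$ disjoint from $\sigma_i$ one can perturb so that $\nn_{\eta_i}(D)\cap g_{i-1}\cdot\sigma_j=\emptyset$. But $D$ is a $k_i$-cylinder, hence $(k_i+1)$-dimensional, and $\sigma_j$ may have dimension $m-2$; the sum $(k_i+1)+(m-2)$ is at least $m$ whenever $k_i\geq 1$, so transversality produces a nonempty intersection rather than disjointness. Thus for all but the $0$-simplices your cylinder will inevitably meet the previously processed simplices, and the slide $h_i$ will move pieces of them. Since the support of $h_i$ has to interpolate between the slide on $D$ and the identity near $\partial\nn_{\eta_i}(D)$, these pieces can be carried back into $C$; the claim that they end up in the ``outer half'' is unjustified for points off $D$. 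The paragraph you label the main obstacle is therefore not a technicality but the heart of the matter, and it is not resolved.

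The paper's argument avoids this bookkeeping entirely by a different mechanism. One fixes a small $m$-ball $D\subset\mathring C$ missing $\Delta$ and a diffeomorphism $\phi\colon \mathring C'\setminus D\cong S^{m-1}\times(0,\infty)$ sending $C\setminus D$ to $S^{m-1}\times(0,1]$, and works exclusively with \emph{$U$-lifts}: compactly supported diffeomorphisms that preserve the $S^{m-1}$-coordinate and never decrease the radial coordinate. Such a lift can only push points outward, so once a simplex leaves $C$ it stays out regardless of which later lifts touch it—no control over pairwise intersections is needed. Compressivity enters because a $U$-lift that pushes a set $F$ with $(\pi_{S^{m-1}}\circ\phi)(F)$ a $d$-disk, $d<m-1$, into the exterior is supported near a $d$-cylinder and hence lies in some $\cmp{\eta}{D'}\subseteq\V_0$. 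The remaining work is to arrange, via a small perturbation coming from the Thom--Boardman stratification (Corollary~\ref{c: projecting simplicial complexes}), that each $(m-2)$-simplex has a radial projection whose singular set is a finite descending chain of submanifolds on whose successive differences the projection is an immersion; one then lifts stratum by stratum, each stratum being covered by finitely many disks in $S^{m-1}$ of dimension $<m-1$. The small non-lift perturbations are absorbed by working with a nested family $O_0\subset O_1\subset\cdots$ of exterior collars.
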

	        \begin{proof}
	          Pick some $m$-ball $D\subseteq \mathring{C}$ such that 
	        	$D\cap\Delta=\emptyset$ and some $m$-ball $C'$ such that $D\subseteq C\subseteq U$, where $U :=\mathring{C}'$.	Then there exists    
	        	$$
	        	\phi:U\setminus D\cong S^{m-1}\times(0,\infty)  
	        	$$
	          mapping $C\setminus D$ onto $S^{m-1}\times(0,1]$.   
	          We may assume every simplex in $\Delta^{(m-2)}$ is the face of some $(m-2)$-simplex and, after subdividing $\Delta$, also that any simplex in 
	        	$\Delta^{m-2}$ is either contained in $U$ or in $M\setminus C$. Write 
	        	$$\Theta:=\{\,\sigma\in\Delta^{m-2}\,|\,\sigma\subseteq U\,\} $$

	         	We will use the term $U$-lift to refer to the 
	         	extension by the identity on $D\cup (M\setminus U)$ of some element $h\in\D_{c0}(U\setminus D)$ with the property that for all $p\in S^{m-1}\times(0,\infty)$ the push-forward $\hat{h}$ of $h$ by $\phi$ to $S^{m-1}\times(0,\infty)$ satisfies
	          $$\pi_{(0,\infty)}(\hat{h}(p))\geq\pi_{(0,\infty)}(p), \quad \pi_{S^{m-1}}(\hat{h}(p))=\pi_{S_{m-1}}(p)$$
	          for all $p\in S^{m-1}\times(0,\infty)$. 
	          Notice that any $U$-lift $h$ satisfies $C\cap h(M\setminus C)=\emptyset$ and $D\cap h(M\setminus D)=\emptyset$. 
	          
	          Let $\{\sigma_{k}\}_{k=1}^{r}$ be an enumeration of $\Theta$ and pick $\V_{0}\in\nd$ such that $\V_{0}^{2r}\subseteq\V$. For every $1\leq k\leq r$ let $\iota_{k}:B^{m-2}\to U$ be some smooth embedding such that $\sigma_{k}=\iota_{k}(\tau_{k})$ for some affine simplex $\tau_{k}\subseteq B^{m-2}$.

	          For $1\leq k\leq r$ we inductively choose some $g'_{k}\in\V_{0}\cap \V_{0}$ supported on $U\setminus D$ and then a $U$-lift $g_{k}\in\V_{0}$ in such a way that if we write
	          $$\overline{g}_{0}:=1,\quad\overline{g}_{k}:=g_{k}g'_{k}\cdots g_{1}g'_{1},\quad \iota_{k,l}:= g'_{k}\overline{g}_{k-1}\circ\iota_{l}:B^{m-2}\to M,$$
	          then  $\overline{im(g_{k}\circ\iota_{l})}\cap C=\emptyset$ for all $1\leq l\leq k\leq m$.   
	          Clearly, the element $g :=\bar{g}_{r}\in\V_{0}^{2r}\subseteq\V$ will satisfy $C\cap g\cdot\rl{\Delta^{(m-2)}}=\emptyset$. 
	          Suppose $g'_{1},g_{1}\dots g'_{k-1},g_{k-1}$ have already been chosen. 
	          
	          Assume that $g'_{l},g_{l}$ have already been constructed for $1\leq l\leq k-1$.  	                   
	          Let $\epsilon$ be such that $\w_{\epsilon}\subseteq\V_{0}$. 
	          Corollary \ref{c: projecting simplicial complexes} allows us to find for any 
	          $\eta\in(0,\epsilon)$ some  
	          $g'_{k}\in\w_{\eta}$ supported on $U\setminus D$ such that there is a filtration $$L_{0}=B^{m-2}\supsetneq L_{1}\supsetneq\dots L_{n}\supsetneq L_{n+1}=\emptyset,$$
	           where $L_{l+1}$ is a closed submanifold of $L_{l}$ and $\pi_{S^{m-2}}\circ\phi\circ\iota_{k,k}$ restricts to an immersion on $L_{l}\setminus L_{l+1}$. Clearly, 
	           by choosing $\eta$ small enough we might also assume that 
	           $\overline{im(\iota_{k,l})}\cap C=\emptyset$ for all $1\leq l<k$. 
	           It remains to find a $U$-lift $g_{k}$ such that $\overline{im(g_{k}\circ\iota_{k,k})}\cap C=\emptyset$. 
	           
	          We start by noting the following easy consequence of the compressiveness of $\T$.
	          \begin{observation}
	          	\label{o: narrow lifts} Assume that $F\subseteq U\setminus D$ is a compact set for which $$(\pi_{S^{m-1}}\circ\phi)(F)\subseteq S^{m-1}$$ is contained in an embedded $d$-disk in $S^{m-1}$, $d<m-1$.
	          	Then for every $\U\in\nd$ there exists some $U$-lift $g$ such that $g\cdot F\cap C=\emptyset$.
	          \end{observation} 

	          In order to construct $g_{k}$, choose in advance some sequence $(\V_{l})_{l\geq 1}$, $\V_{l}=\V_{l}^{-1}\in\nd$, with the property that $\V_{l}^{2}\subseteq\V_{l-1}$ for all $l\geq 1$. 
	          Additionally, for $l\geq 1$ and $j\geq 2$ choose $\V_{l,j}=\V_{l,j}^{-1}\in\nd$ such that 
	          $(\V_{l,j})^{j}\subseteq\V_{l}$. 
	          
	          We will construct by induction on $l$ a sequence of $U$-lifts $h_{l}\in\V_{n-l+1}$ supported on $U\setminus D$ with the property that if we let $\tilde{h}_{l}:=h_{l}\cdots h_{0}$, then 
	          $\overline{(\tilde{h}_{l}\circ\iota_{k}')(V_{l})}\cap C=\emptyset$ for some neighbourhood $V_{l}$ of $\tau_{k}\cap L_{n-l}$. At the end we may set $g_{k}:=h_{n}\cdots h_{0}\in\V_{0}$ (obviously, a composition of $U$-lifts is again a lift).           
	          Write $V_{-1}=\emptyset$ and assume that we have already constructed $h_{l'}$ for $0\leq l'<l$. Since $(L_{n-l}\cap\tau_{k})\setminus V_{l-1}$ is compact, it can be covered by a collection $\{D_{l,s}\}_{1\leq s\leq r_{l}}$ of finitely many closed $dim(L_{n-l})$-balls in $L_{n-l}$.\footnote{As a matter of fact $dim(L_{l})=m-2(l+1)$, see \cite{arnold2014singularities}.}
	          We choose lifts $h_{l,s}$ successively for $1\leq s\leq r_{l}$.
	          Given $s$, the map 
	          $$\pi_{S^{m-1}}\circ\phi\circ h_{l,s-1}\cdots h_{l,0}\tilde{h}_{l-1}\circ\iota_{\restriction D_{l,s}}=\pi_{S^{m-1}}\circ\phi\circ\iota_{\restriction D_{l,s}}:D_{l.s}\to S^{m-1}$$
	          is a smooth embedding, where the equality holds because all the $h_{l,t}$ in the expression are $U$-lifts. Observation \ref{o: narrow lifts} provides some $U$-lift $h_{l,s}\in\V_{m-l+1,r_{l}}$ supported on $U^{*}$ such that  
	          $$(h_{l,s}\cdots h_{l,0}\tilde{h}_{l-1}\circ\iota_{k})(D_{l,s})\cap C=\emptyset.$$ 
	          If we write $h_{l}:=h_{l,r_{l}}\cdots h_{l,0}$, then we have
	          $\tilde{h}_{l}\cdot (L_{n-l}\cap\tau_{k})\cap C$ and the existence of $V_{l}$ follows by continuity.
	                    
	        \end{proof}
    
    \begin{lemma}
      \label{l: different stabilizers}For any finite $(m-1)$-dimensional simplicial complex $\Delta$ in $M$ and any $\epsilon>0$ we have that 
      $\SS{\hh}{\Delta}\subseteq\w_{\epsilon}\PS{\hh}{\Delta}\w_{\epsilon}\PS{\hh}{\Delta}$. Therefore, $\PS{\hh}{\Delta}$ can be replaced by $\SS{\hh}{\Delta}$ in the conclusion of in Corollary \ref{c: control on vertices} (by the continuity of multiplication). 
    \end{lemma}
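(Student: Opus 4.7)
The plan is to write $g \in \SS{\gg}{\Delta}$ as $g = w^{-1} p_1 w p_2$, where $w \in \V_\epsilon$ is a small push-off sending $\rl{\Delta}$ to a parallel copy $\rl{\Delta'}$ disjoint from $\rl{\Delta}$ outside a small neighbourhood of $\rl{\Delta^{(m-2)}}$, and $p_1 \in \PS{\gg}{\Delta}$ encodes the tangential action of $g$ on $\rl{\Delta}$ via conjugation by $w$, while $p_2 \in \PS{\gg}{\Delta}$ absorbs the remaining discrepancy. Since $\V_\epsilon$ is symmetric, this will yield the claimed inclusion $g \in \V_\epsilon \PS{\gg}{\Delta} \V_\epsilon \PS{\gg}{\Delta}$.

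First I would fix open neighbourhoods $\overline{U'} \subseteq U$ of $\rl{\Delta^{(m-2)}}$ on which the witnessing diffeotopy $G_t$ is the identity for all $t$. For each top simplex $\sigma \in \Delta^{m-1}$ the normal bundle of $\mathring{\sigma}$ in $M$ is trivial, so I can select a smooth unit normal field $\nu_\sigma$ on the interior of $\sigma$. Combining these with a bump function $\alpha \colon \rl{\Delta^{m-1}} \to [0, \epsilon/2]$ that vanishes on $U'$ and is positive off $\overline{U}$, together with a fibrewise cut-off along the normal direction, I construct $w \in \kk \cap \V_\epsilon$ supported in a thin one-sided tubular collar of $\rl{\Delta^{m-1}}$ that sends each $p \in \sigma$ to the point at signed distance $\alpha(p)$ along $\nu_\sigma$. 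Setting $\rl{\Delta'} := w(\rl{\Delta})$, it coincides with $\rl{\Delta}$ on $U'$ and is disjoint from $\rl{\Delta}$ on the complement of $\overline{U}$ in $\rl{\Delta^{m-1}}$.

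Next I would construct $p_1$ from $\tilde g := w g w^{-1}$, which preserves each $\sigma' := w(\sigma)$ set-wise and is the identity in a neighbourhood of $\partial \sigma'$. For each $\sigma'$ I would pick a trivialised slab $T_{\sigma'} \cong \sigma' \times (-\tau, 0]$ on the side of $\sigma'$ opposite to $\sigma$, chosen so thin that $T_{\sigma'}$ is disjoint from $\rl{\Delta}$ and bounded away from $\rl{\Delta^{(m-2)}}$, and let $p_1$ act as $\tilde g|_{\sigma'} \times \mathrm{Id}$ on each slab and as the identity outside. After smoothing corners each $\overline{T_{\sigma'}}$ is a smoothly embedded closed $m$-ball containing the support of the corresponding factor of $p_1$ in its interior, so the hypothesis on $\gg$ places each factor in $\gg$, and since the slabs are pairwise disjoint $p_1 \in \gg$ as well. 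By construction $p_1$ fixes $\rl{\Delta}$ pointwise and is identity on a neighbourhood of $\rl{\Delta^{(m-2)}}$, so $p_1 \in \PS{\gg}{\Delta}$.

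To finish, a short computation will show that $w^{-1} p_1 w$ agrees with $g$ on $\rl{\Delta}$ (by the definition of $p_1$ on $\rl{\Delta'}$) and on a neighbourhood of $\rl{\Delta^{(m-2)}}$ (both are the identity there), so $p_2 := (w^{-1} p_1 w)^{-1} g$ lies in $\PS{\gg}{\Delta}$ and gives $g = w^{-1} p_1 w p_2$ as required. For the ``in particular'' clause, given a target neighbourhood $\V \in \nd$ in Corollary \ref{c: control on vertices}, I would pick $\V_0 = \V_0^{-1} \in \nd$ with $\V_0^4 \subseteq \V$ and $\epsilon > 0$ with $\V_\epsilon \subseteq \V_0$, apply the corollary to $\V_0$ to obtain $\Delta$ with $\PS{\gg}{\Delta} \subseteq \V_0$, and combine with the inclusion just proved to get $\SS{\gg}{\Delta} \subseteq \V_0^4 \subseteq \V$. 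The main obstacle is the simultaneous coherent construction of $w$ and the slabs $T_{\sigma'}$: the bump $\alpha$ must taper to zero near $\rl{\Delta^{(m-2)}}$ so that the pushes associated to distinct top simplices do not conflict, and the slabs must be thin enough to be disjoint from $\rl{\Delta}$ and separated from $\rl{\Delta^{(m-2)}}$ while remaining contained in honest embedded balls after corner smoothing.
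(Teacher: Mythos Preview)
Your overall strategy coincides with the paper's: push $\rl{\Delta}$ slightly off itself by some $w\in\V_{\epsilon}$, realise the tangential action of $g$ on the parallel copy $\rl{\Delta'}=w(\rl{\Delta})$ by an element supported away from $\rl{\Delta}$, and absorb the rest into $\PS{\gg}{\Delta}$. The paper carries this out with $h_{1}$ playing the role of your $w$ and $h_{2}$ that of your $p_{1}$.

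There is, however, a genuine gap in your construction of $p_{1}$. You set $p_{1}$ equal to $\tilde g|_{\sigma'}\times\mathrm{Id}$ on each slab $T_{\sigma'}\cong\sigma'\times(-\tau,0]$ and to the identity outside. But the support of $\tilde g|_{\sigma'}\times\mathrm{Id}$ is $K\times(-\tau,0]$ with $K=\overline{\mathrm{supp}(\tilde g|_{\sigma'})}$, whose closure meets the far face $\sigma'\times\{-\tau\}$; the map you describe is therefore not even continuous across that face, let alone smooth, and it is certainly not supported in the interior of $\overline{T_{\sigma'}}$ as you claim. Equivalently, extending a diffeomorphism of $\sigma'$ that is the identity near $\partial\sigma'$ to a diffeomorphism of $\sigma'\times[-\tau,0]$ compactly supported in the interior requires knowing that it is diffeotopic to the identity rel $\partial\sigma'$, which is not automatic in general.

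The fix is precisely the one ingredient of the definition of $\SS{\gg}{\Delta}$ you never invoke: the witnessing diffeotopy $G$. Since each $G_{t}$ preserves each $\sigma$ and is the identity near $\rl{\Delta^{(m-2)}}$, the conjugate $wG_{t}w^{-1}$ restricts to a diffeotopy of each $\sigma'$ from the identity to $\tilde g|_{\sigma'}$. One then defines $p_{1}$ on the slab by $(p,s)\mapsto (wG_{\rho(s)}w^{-1}(p),s)$ for a smooth $\rho:(-\tau,0]\to[-1,1]$ with $\rho(0)=1$ and $\rho\equiv -1$ near $-\tau$; this tapers smoothly to the identity at the far end and still agrees with $\tilde g$ on $\sigma'$. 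This is exactly the paper's $h_{2}(p,s)=G(p,\rho(s))$ transported through the push-off, and with it your argument goes through. A side benefit is that $p_{1}$ is then visibly in $\D_{c0}(M)\leq\gg$, so the ball-support hypothesis on $\gg$ is not needed at this step.
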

    \begin{proof}
		    Let $\{\sigma_{i}\}_{i=1}^{r}$ be an enumeration of $\Delta^{m-1}$. 
		    For $1\leq l\leq 2$ choose collections of disjoint embedded $(m-1)$-balls 
		    $\{D_{i}^{l}\}_{i=1}^{r}$, so that if we write $N_{l}:=\bigcup_{i=1}^{r}\mathring{D}_{i}^{l}$, then
		    \begin{itemize}
		    	\item  $D^{1}_{i}\subseteq \mathring{D}^{2}_{i}\subseteq D^{2}_{i}\subseteq\sigma_{i}\setminus\partial\sigma_{i}$ for $1\leq i\leq r$,
		    	\item $\rl{\Delta}\subseteq N_{1}\cup U$ (and thus $N_{2}\setminus N_{1}\subseteq U$). 
		    \end{itemize}
	    	    
		    Take some (trivial) relatively compact tubular neighbourhood   
		    $$\xi:E:=N_{2}\times (-1,4)\to M,$$
		    where $\pi:E \to N_{2}$ is the obvious projection, such that
		    \begin{enumerate}
		    	\item \label{clean preimage}$\xi(\pi^{-1}(N_{2}))\cap|\Delta|=\xi(N_{2}\times\{0\})$,
		    	\item \label{fibers}for any $p\in N_{2}$ we have $diam(\xi(\{p\}\times(-1,4)))<\epsilon$.
		    \end{enumerate}  
	 
	      Let $f:N_{2}\to[0,2]$ some smooth map such that $f_{\restriction N_{1}}=2$, $f_{\restriction V}=0$ in the complement of some compact subset of $N_{2}$. It follows from (\ref{fibers}) that one can find $h_{1}$ diffeotopic to the identity by a fiber preserving diffeotopy compactly supported on $im(\xi)$ and such that $\phi\circ h_{1}\circ\phi^{-1}_{\restriction N_{2}\times\{0\}}$ maps $(\underline{x},0)$ to $(\underline{x},f(\underline{x}))$. Note that, in particular, $h_{1}\in\w_{\epsilon}$. 
        
        Fix also some smooth function $\rho:(-1,4)\to [0,1]$ with $\rho(2)=1$ and $\rho(s)=0$ outside of $(1,3)$ and consider the map $\hat{H}: E\times I\to I$ defined by 
	      \begin{align*}
		     \hat{H}((p,s),t):= (G(p,(t+1)\rho(s)-1),s).
	      \end{align*}
	      One can check that $\hat{H}$ is a compactly supported diffeotopy of $Id_{E}$. 
	      We denote by $H$ the compactly supported diffeotopy of $Id_{M}$ obtained by extending  $(\xi\circ\hat{H}_{t}\circ\xi^{-1})_{t\in I}$ by the identity.
	      For any $p\in N_{2}$ and $t\in I$ we have
	       \begin{equation}
		       \label{eq fix} \tag{\protect\pluma}H_{t}(p)=\xi(\hat{H}_{t}(p,0))=\xi(G(p,-(t+1)0-1),0)=\xi(G(p,-1),0)=\xi(p,0)=p.
	       \end{equation}
	       For $p\in N_{1}$ and $t\in I$ we also have: 
	       \begin{align}
	       \label{eq conj} \tag{\protect\plumaa}
	       \begin{split}
	       h_{1}^{-1}H_{t}h_{1}(p)=&h_{1}^{-1}\circ\xi(\hat{H}_{t}(p,2))=h_{1}^{-1}\circ\xi(G(p,(t+1)\rho(2)-1),2)=\\&h_{1}^{-1}\circ\xi(G_{t}(p),2)=\xi(G_{t}(p),0)=G_{t}(p)
	       \end{split}
	       \end{align}
	       Likewise, since $supp(G)\subseteq N_{1}$ and $h_{1}$ preserve fibers, we conclude that 
	       \begin{equation}
	       	\label{eq fixfix}  \tag{\protect\plumaaa} supp(H),supp(h_{1}^{-1}H_{t}h_{1})_{t\in I}\subseteq\xi(\pi^{-1}(N_{1})),
	       \end{equation}    
	       One deduces from (\ref{clean preimage}), (\ref{eq fix}) and (\ref{eq fixfix}) that $H$ is
	       supported in the complement of the union of $\rl{\Delta}$ and some neighbourhood of $\rl{\Delta^{(m-2)}}$. It follows that $h_{2}:=H_{1}$ belongs to $\PS{\hh}{\Delta}$. 
	       Consider now the map $G^{*}:=M\times I\to M$ given by $G_{t}^{*}:=G_{t}^{-1}h_{1}^{-1}H_{t}h_{1}$. This is a  compactly supported diffeotopy from $Id_{M}$ to $g^{-1}h_{1}^{-1}h_{2}h_{1}$.
	        We also have:
	       \begin{itemize}
	       	\item $supp(G^{*})$ is disjoint from $V=U\cap\rl{\Delta}$,
	       	\item and from (\ref{eq conj}) we deduce that $(G_{t})_{\restriction N_{1}}=Id_{N_{1}}$ for all $t\in I$, so that it is the identity on $\rl{\Delta}$, by the previous bullet point. 
	       \end{itemize}
	       Therefore, $G^{*}$ witnesses $g^{-1}h_{2}^{h_{1}}\in\PS{\hh}{\Delta}$, 
	       which implies that
	       $$g\in h_{2}^{h_{1}}\PS{\hh}{\Delta}\subseteq(\PS{\hh}{\Delta})^{h_{1}}\PS{\hh}{\Delta}\subseteq\w_{\epsilon}\PS{\hh}{\Delta}\w_{\epsilon}\PS{\hh}{\Delta},$$
	       settling the first assertion of the lemma. The second one follows by the continuity of multiplication.
	    \end{proof}

  \begin{proposition}
  	\label{p: slice induction}Assume that the group topology $\T$ on $\gg$ satisfies:
  	\begin{enumerate}
  		\item arc compressions are dense around the identity in $\T$ (Definition \ref{d: compressions}),
  		\item one of the following holds: 
  		\begin{itemize}
  			\item $\T\subseteq\tc$,
  			\item or $M$ is compact and small deformations are close to the identity in $\T$ (Definition \ref{d: small deformations are dense}).
  		\end{itemize}
  	\end{enumerate}
  	 Then for any $m$-ball $D$ in $M$ and any $g\in\D_{c0}(\mathring{D})\subseteq\D_{c0}(M)$ (where the identification is by extension by the identity outside of $\mathring{D}$) we have $g\in\V$ for all $\V\in\nd$. 
  \end{proposition}
  \begin{proof}
  	 Note that Corollary \ref{c: control on vertices} applies (by Observation \ref{o: intrusiveness}). We will use its conclusion several times. 
  	 We prove the result by induction on $m$, starting from the base case $m=1$, for which the desired conclusion is just a reformulation of the assumption that arc compressions are dense. 
     Suppose now the result is true for any $1\leq k<m$. We start with the following sublemma.    
     \begin{lemma}
	     \label{l: inductive slices}Under the assumptions of \ref{p: slice induction} compressions are dense around the identity in $\T$.
     \end{lemma}
   \begin{subproof}
	     Choose any embedded $(m-1)$-ball $D$ in $M$ and $\V\in\nd$ and let $\W=\W^{-1}\in\nd$ satisfy $\W^{3}\subseteq\V$. Using Whitney's extension theorem and the existence of tubular neighbourhoods one can easily construct an embedding $\phi:S^{m-1}\to M$ extending $D$. By the existence of tubular neighbourhoods and the simple-connectedness of $S^{m-1}$, there is a smooth embedding $\xi:S^{m-1} \times I\to M$ such that $\xi_{\restriction S^{m-1}\times\{0\}}=(\phi\circ\pi_{S^{m-1}})_{\restriction S^{m-1}\times\{0\}}$.   
	     Consider the collection $\mathcal{N}^{*}:=\{\V^{*}\}_{\V\in\nd}$ of subsets of $\D_{c0}(S^{m-1})$, where for $\V\in\nd$ we define 
	     $$
	      \V^{*}:=\{\,g\in\D_{c0}(S^{m-1})\,\,|\,\,\exists\hat{g}\in\V\cap\D_{c0}^{\{\phi(S^{m-1})\}}(M)\,\,\,\,\phi^{-1}\circ\hat{g}\circ\phi=g\,\}.
	     $$
	     Let $\T^{*}$ be the unique topology on $\D_{c0}(S^{m-1})$ in which a system of neighbourhoods of $h$ 
	     	 is given by the translate $h\mathcal{N}^{*}$. 
	     \begin{lemma}
	     	 \label{l: aux top}$\T^{*}$ is a group topology in which small deformations are close to the identity and in which arc compressions are dense around the identity.
	     \end{lemma}
	     \begin{subsubproof}
	     	  For all $\V_{0},\V_{1}\in\nd$ we have $(\V_{0}\cap\V_{1})^{*}\subseteq\V_{0}^{*}\cap\V_{1}^{*}$, which implies that $\mathcal{N}^{*}$ generates a filter. Similarly, $(\V^{-1})^{*}=(\V^{*})^{-1}$ for all $\V$, so continuity of the inverse map at the identity in $\T^{*}$ follows from the same property for $\T$.
	     	 It is also clear that $\V_{0}^{*}\V_{1}^{*}\subseteq(\V_{0}\V_{1})^{*}$, so that, likewise, continuity of multiplication at the identity follows from the same property for $\T$. 	     	 
	     	 Finally, notice that $\mathcal{N}^{*}$ is invariant under conjugation by elements from $\D_{c0}(S^{m-1})$. Indeed, any element $g\in\D_{c0}(S^{m-1})$ extends via $\phi$ to an element $\tilde{g}\in\D_{c0}(M)$ (Lemma \ref{l: isotopy extension}), and given $\V\in\nd$, it is easy to see that $(\V^{*})^{g}=(\V^{\tilde{g}})^{*}\in\nd$, 
	     	 from which invariance of $\mathcal{N}^{*}$ under conjugation follows. 
	     	 
	     	 The above discussion shows that $\T^{*}$ is a group topology. That small deformations are close to the identity in $\T^{*}$ follows from the assumptions $\T$ satisfies this property, together with Observation \ref{o: lifting small diffeotopies}, the density of arc compressions around the identity in $\T^{*}$ follows from the fact that $\T$ satisfies the property, together with Lemma \ref{l: isotopy extension}.
	     \end{subsubproof}

	     Given $\W=\W^{-1}\in\nd$ with $\W^{3}\subseteq\V$, Corollary \ref{c: control on vertices} implies that we can find some finite embedded simplicial complex $\Delta$ in $M$ such that $\PS{\hh}{\Delta}\subseteq\W$ and $\phi(S^{m-1})\transv \Delta$. 
	     The last condition implies $\rl{\Delta}\cap S^{m-1}$ avoids a small $(m-1)$-ball $E$ in $S^{m-1}$, so for any embedded $(m-1)$-ball $E'$ in $S^{m-1}$ with $\phi^{-1}(E)\subseteq\mathring{E}'$ there is $\hat{h}\in\D_{c0}(E')\subseteq\D_{c0}(S^{m-1})$, such that $\hat{h}\cdot\phi^{-1}(\Delta)\cap\phi^{-1}(E)=\emptyset$. 
	     The inductive assumption that Proposition \ref{p: slice induction} holds in all dimensions less than $m$, together with Lemma \ref{l: aux top}, implies that $\hat{h}\in\W^{*}$, which yields an extension $h\in\W$ 
	     for which  $h\cdot\rl{\Delta}\cap E=\emptyset$. This then implies the existence of $\eta>0$ such that:
	     $$
	      \cmp{\eta}{E}\subseteq\SS{\hh}{h\cdot\Delta}=\SS{\hh}{\Delta}^{h^{-1}}\subseteq\W^{3}\subseteq\V.
	     $$
   \end{subproof}

     We will derive the conclusion of the proposition from Lemma \ref{l: aux top}.
     Fix $m$-balls $D,D'$ in $M$, $D\subseteq\mathring{D'}$ in $M$ and $g\in \D_{c0}(\mathring{D})\subseteq\D_{c0}(M)$ (extending by the identity outside of $D$).
          
     Let $\V\in\nd$ and choose $\V_{0}=\V_{0}^{-1}\in\nd$ with 
     $\V_{0}^{15}\subseteq\V$. Let also $\delta>0$ be such that $\w_{\delta}\subseteq\V_{0}$. 
      Let $\Delta_{0}$ be a finite embedded simplicial complex with 
     $\SS{\hh}{\Delta_{0}}\subseteq\V_{0}$, as given by Lemmas \ref{l: nerve} and \ref{l: different stabilizers}. By Lemma \ref{l: cleaning} there is $h_{0}\in\V_{0}$ such that $h_{0}\cdot\rl{\Delta_{0}^{(m-2)}}\cap D'=\emptyset$.
     Lemma \ref{l: wlog transverse} implies that for any $\delta>0$ there is $h_{1}\in\w_{\delta}$ such that $\Delta:=h_{1}h_{0}\cdot\Delta_{0}$ satisfies the previous property and, additionally, $\partial D'\transv \Delta$. By using Lemma \ref{l: wlog transverse} and the existence of tubular neighbourhoods, as in the proof of Lemma \ref{l: different stabilizers}, for any $\delta>0$ one finds $h_{2}\in\w_{\delta}$ such that
     $\Theta:=h_{2}\cdot\Delta$ is still transverse to $\partial D'$ and also satisfies $\rl{\Theta}\cap\rl{\Delta}\cap D=\emptyset$. We have 
     $$\SS{\hh}{\Delta}\subseteq\V_{0}^{5},\quad\quad\SS{\hh}{\Theta}\subseteq\V_{0}^{7}.$$ 
     
     If we let $U :=\mathring{D}$, then $L:=|\Delta|\cap U$ and $N:=|\Theta|\cap U$ are disjoint closed submanifolds of $U$. By assumption, $g_{\restriction U}\in\D_{c0}(U)$. 
     Therefore, by corollaries \ref{c: Morse} and \ref{c: compatibility and decompositions} there is $g_{0}\in\w_{\delta}$ supported on some compact subset of $U$ 
     and a compactly supported diffeotopy $G$ from $Id_{M}$ to $g_{0}g$ of the form 
     $G=G^{c}*G^{d}$, where 
     \begin{itemize}
     	\item $G^{d}_{\restriction N\times I}$ is $L$-compatible and non-destructive, with $r_{c}$-many tangency points,
     	\item and $H:=G^{d}_{\restriction N\times I}$ is $L$-compatible and purely destructive, with $r_{d}$-many tangency points.  
     \end{itemize}
     Pick $\V_{1}=\V_{1}^{-1}\in\nd$ such that $\V_{1}^{\max\{r',r''\}}\subseteq\V_{0}$. Let $g_{c}:=G^{c}_{1}$ and consider the compactly supported diffeotopy $\tilde{G}:=(G^{d}_{-t}g^{-1}g_{0}^{-1})_{t}$ from 
     $Id_{M}$ to $g_{c}g^{-1}g_{0}^{-1}$. Notice that $\tilde{G}_{\restriction g_{0}g(N)}=H^{op}$,
     which is non-destructive. The conclusion of Lemma \ref{l: inductive slices} allows us to apply Corollary \ref{c: compatibility and decompositions} 
     with $$\mathscr{X}=\mathscr{X}^{-1}:=\V_{1}\cap\D_{c0}(U)$$ (with $\D_{c0}(U)\subseteq\D_{c0}(M)$ as above) to both $G^{c}$ and $\tilde{G}$.
     We obtain decompositions: 
     \begin{align*}
     	g_{c}=g_{L}h_{r_{c}}h_{r_{c}-1}\cdots h_{1}g_{N},\quad\quad g_{c}g^{-1}g_{0}^{-1}=\tilde{g}_{L}\tilde{h}_{r_{d}}\cdots\tilde{h}_{1}\tilde{g}_{N},
     \end{align*}
     where $g_{L},\tilde{g}_{L}\in\D_{c0}^{\{L\}}(U)$, $g_{N}\in\D_{c0}^{\{N\}}(U)$,
     $\tilde{g}_{N}\in\D_{c0}^{\{g_{0}g(N)\}}(U)$ and $h_{i},\tilde{h}_{j}\in\mathscr{X}$ for 
     $1\leq i\leq r_{c}$ and $1\leq j\leq r_{d}$.  
     Writing $\hat{g}_{N}:=\tilde{g}_{N}^{g_{0}g}$, we then have $\hat{g}_{N}\in\D_{c0}^{\{N\}}(M)$ and   
     \begin{align*}
     	\label{dagdag}
     	g_{0}g =& \tilde{g}_{N}g_{0}g\hat{g}_{N}^{-1}=
     	\tilde{g}_{N}(g_{c}g^{-1}g_{0}^{-1})^{-1}g_{c}\hat{g}_{N}=\\
     	&\tilde{h}_{1}^{-1}\cdots\tilde{h}_{r_{d}}^{-1} \tilde{g}_{L}^{-1}g_{L}h_{r_{c}}\cdots h_{1}g_{N}\hat{g}_{N}
     \end{align*} 
     Our usual identification yields inclusions $\D_{c0}^{\{L\}}(U)\subseteq\SS{\kk}{\Delta}$
     and $\D_{c0}^{\{N\}}(U)\subseteq\PS{\kk}{\Theta}$. We thus have
     $$
     g\in g_{0}^{-1}\V_{1}^{r_{d}}\SS{\hh}{\Delta}\V_{1}^{r_{c}}\SS{\hh}{\Theta}\subseteq\V_{0}^{15}\subseteq\V,
     $$   
     as needed.
   \end{proof}
 
  \begin{customthm}{\ref{t: main}}
  	\bodythmdiffeos
  \end{customthm}
  
  \begin{proof}
   Assume that we are given some Hausdorff group topology $\T$ on $\gg$ strictly coarser than $\tc$ (the restriction of the compact-open topology). Corollary \ref{c: 0-slices} implies that arc compressions are dense around the identity in $\T$ (Definition \ref{d: compressions}). We can then apply Proposition \ref{p: slice induction}, whose conclusion implies that $\T$ is not Hausdorff: a contradiction.
  \end{proof}
  
  \begin{remark}
  	Note that Corollary \ref{c: 0-slices} was the key step requiring the assumption that $m\neq 3$, while the use of the assumption $\partial M=\emptyset$ plays a role already in Observation \ref{o: generation compact-open topology}.
  \end{remark}
  
  \section{Proof of Theorem \ref{c: main} }
  \label{s: proof main results}
  We will now deduce our main theorem \ref{c: main} in the introduction from the more technical Theorem \ref{t: main} we just proved. We begin with a general property of minimal groups, Theorem 3.1 of \cite{stephenson1971minimal}.
  \begin{fact}
  	\label{f: density and minimality} Let $(G,\T)$ be a topological group and $H$ a dense subgroup of $G$. Then the restriction of $\T$ to $H$ is a minimal group topology on $H$ if and only if $\T$ is a minimal group topology on $G$ and $H$ intersects all non-trivial closed normal subgroups of $G$ non-trivially.
  \end{fact}
   
   All manifolds in dimension $\leq 3$ admit a smooth structure and any $g\in\H(M)$ can be approximated by diffeomorphisms, while in dimension at least $5$ the property depends only on the isotopy class of $g$ (see \cite{muller2014uniform}). In particular, we have the following. 
  \begin{fact}[\cite{munkres1960obstructions} and \cite{muller2014uniform}, Thm. 4]
	  \label{f: mullers approximation} Let $M$ be a smooth connected manifold of dimension $m\neq 4$ without boundary, (possibly non-compact). Then for any $g\in\H_{c}(M)\cap\H_{0}(M)$ there is some 
	  compact set $K\subseteq M$ and some sequence $(g_{n})_{n\in\N}\subseteq\D(M)$ of elements supported on $K$ converging to $g$ in the compact-open topology (hence, uniformly). 
  \end{fact}
   
  We are now ready to prove the main Theorem in the introduction. 
  \begin{customthm}{\ref{c: main}} \bodythmhomeos
  \end{customthm}
   \begin{proof}
	   Let $\gg'$ be the closure of $\hh':=\D(M)\cap\H_{c0}(M)\subseteq\gg$ in $\gg$ with respect to the compact-open topology. We claim that $\H_{c0}(M)\subseteq\gg'$. Choose any $g\in\H_{c0}(M)$ and let $(g_{n})_{n\in\N}\subseteq\D_{c}(M)$ be the approximating sequence provided by Fact \ref{f: mullers approximation}. It follows by the main theorem of \cite{edwards1971deformations} that for $n$ large enough $g_{n}$ is isotopic to $g$ through a compactly supported topological ambient isotopy, so that $g_{n}\in\hh'$ for all $n$ and, therefore, $g\in\gg'$. 
	   
	   The group $\hh'$ clearly satisfies the assumptions of Theorem \ref{t: main} and therefore the restriction of the compact-open topology to $\hh'$ is minimal.
	   By Fact \ref{f: density and minimality} applied to $(\hh',\gg')$, we conclude that any Hausdorff group topology $\T$ on $\gg$ coarser than $\tc$ must agree with $\tc$ on $\gg'$. 
	   In particular, for any $\epsilon>0$ and compact $K\subseteq M$ with non-empty interior there must be some $\V\in\nd$ such that $\V\cap\H_{c0}(M)\subseteq\V\cap\gg'\subseteq\V_{K,\epsilon}$, in direct contradiction with Observation \ref{l: intrusive homeos}.
   \end{proof}

  \section{Questions and remarks}
  
  \label{s: questions}
  \subsection*{Dimension 2} Lemma \ref{l: commutation move} is actually not needed in the proof of the dimension $2$ case of Theorem \ref{t: main} on account of the fact that given two systems of curves (arcs) in general position one of which can be isotoped to be disjoint from the other we can always assume the isotopy to be non-destructive (the reverse of a sequence of bigon removals).

  \subsection*{Manifolds with boundary}
  	  
  Let $M$ be a smooth manifold with $dim(M)\notin\{1,3,4\}$, and assume $\partial M\neq\emptyset$. 
  Although we know that the compact-open topology on $\H(M)$ is not minimal, the image of $\H(M)$ in $\H(int(M))$ by the restriction map does satisfies the assumptions of Theorem \ref{c: main}.
	\begin{corollary}
	  In the situation above the topology $\tau_{co}^{\partial}$ which is the pull-back by $\rho$ of the compact-open topology on $\H(int(M))$ is minimal. 
	\end{corollary}

  More in general, given a subset $\mathcal{X}$ of the set of connected components of $\partial M$, we can define a topology $\tc^{\mathcal{X}}$ as the pull-back of the compact-open topology by the restriction map corresponding to the inclusion $M\setminus\bigcup_{C\in\mathcal{X}}C\subset M$. We suspect that the proof of Theorem \ref{t: main} can be generalized to show that this is all there is:
  \begin{conjecture}
  	Let $M$ be a connected smooth manifold with boundary of dimension $m\neq \{3,4\}$. Then any group topology on $\H(M)$ coarser than the compact-open topology is of the form $\tc^{\mathcal{X}}$ for some $\mathcal{X}\subseteq\pi_{0}(\partial M)$.
  \end{conjecture}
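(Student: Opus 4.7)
The plan is to adapt the strategy of Theorem \ref{t: main}, identifying $\mathcal{X}$ directly from $\T$ and then proving the inclusions $\T\subseteq\tc^{\mathcal{X}}$ and $\tc^{\mathcal{X}}\subseteq\T$ separately. I would define $\mathcal{X}\subseteq\pi_{0}(\partial M)$ as the set of boundary components $C$ for which $\T\subseteq\tc^{\{C\}}$, equivalently those $C$ such that every $\T$-neighborhood of the identity contains some $\V_{K,\epsilon}$ with $K$ compact and $K\cap C=\emptyset$. The first nontrivial step is to promote the condition $\T\subseteq\tc^{\{C\}}$ for each $C\in\mathcal{X}$ to $\T\subseteq\tc^{\mathcal{X}}$. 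Given $\V\in\nd$, continuity of multiplication yields $\V_{0}=\V_{0}^{-1}\in\nd$ with $\V_{0}^{n}\subseteq\V$ for large $n$, and for each $C\in\mathcal{X}$ a set $\V_{K_{C},\epsilon_{C}}\subseteq\V_{0}$ with $K_{C}\cap C=\emptyset$. Conjugating these by suitable elements of $\kk$ so that the conjugated compact sets all avoid $\bigcup_{C\in\mathcal{X}}C$ simultaneously, as in the argument of Observation \ref{o: generation compact-open topology}, forces some $\V_{K,\epsilon}$ with $K\cap\bigcup_{C\in\mathcal{X}}C=\emptyset$ into $\V$; for $\pi_{0}(\partial M)$ infinite one uses that any given compact set of $M$ meets only finitely many boundary components.

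For the converse $\tc^{\mathcal{X}}\subseteq\T$, I would reproduce the machinery of Sections \ref{s: neighbourhoods are rich}--\ref{conclusion} on the manifold with boundary $M':=M\setminus\bigcup_{C\in\mathcal{X}}C$, whose boundary is $\bigsqcup_{C\notin\mathcal{X}}C$. Arguing by contradiction, suppose $\T\subsetneq\tc^{\mathcal{X}}$: then some $\V_{K,\epsilon}$ with $K$ compact in $M'$ is not a $\T$-neighborhood of $1$. The definition of $\mathcal{X}$ ensures this failure is witnessed by compact sets approaching some $C\notin\mathcal{X}$, so the analogue of the strict-coarseness hypothesis of Observation \ref{o: generation compact-open topology} holds at $C$. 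This should imply a boundary-sensitive version of intrusiveness (Lemma \ref{l: spread all over}) and, in turn, density of arc compressions (Corollary \ref{c: 0-slices}) for arcs in $M'$ possibly terminating on $C$. The induction of Theorem \ref{t: slice induction} then applies to $m$-balls in $M'$ meeting $\partial M'$, forcing a nontrivial subgroup into every $\T$-neighborhood of $1$ and contradicting Hausdorffness.

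The main obstacle is adapting the Morse-theoretic analysis of Section \ref{s: Morse} and the commutation move of Lemma \ref{l: commutation move} to hypersurfaces in $M'$ that may meet $\partial M'$. Generic isotopies of such hypersurfaces exhibit additional tangency types at times when the hypersurface becomes tangent to $\partial M'$, so Definition \ref{d: compatible isotopy} must be enlarged accordingly, the local models of Lemma \ref{l: non-destructive isotopies} generalized to these new types, and the reordering of Lemma \ref{l: commutation move} verified in their presence. A related subtlety is that Lemma \ref{l: isotopy extension} as stated assumes $\partial M=\emptyset$; its boundary-preserving refinement is classical, but the support and speed bounds required for Corollary \ref{c: supported away} and the proofs of compression density must be propagated through the new version carefully.
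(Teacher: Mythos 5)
You should first note that the paper contains no proof of this statement: it is stated and labelled as a conjecture, the authors only remarking that they expect the proof of Theorem \ref{t: main} to generalize. So there is nothing of record to compare against, and your proposal has to stand on its own; as written it is a programme rather than a proof. The entire second half --- redoing Sections \ref{s: neighbourhoods are rich}--\ref{conclusion} on $M'=M\setminus\bigcup_{C\in\mathcal{X}}C$ for hypersurfaces that may meet $\partial M'$ --- is deferred in your own words (``should imply'', ``must be enlarged accordingly'', ``verified in their presence''). Those are exactly the points where the work lies: the new boundary-tangency local models to add to Definition \ref{d: compatible isotopy}, boundary versions of Lemma \ref{l: non-destructive isotopies} and of the commutation move in Lemma \ref{l: commutation move} covering the new tangency types, and a boundary-preserving form of Lemma \ref{l: isotopy extension} carrying the quantitative support and speed bounds that Corollary \ref{c: supported away} and the compression-density arguments require. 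Until these are carried out, nothing is proved. (Also, the statement as given needs a Hausdorff hypothesis --- the indiscrete topology is coarser than $\tc$ and no $\tc^{\mathcal{X}}$ is indiscrete, since restriction to the interior is injective --- and your closing contradiction indeed uses Hausdorffness.)

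There is moreover a concrete flaw in your first step. Promoting $\T\subseteq\tc^{\{C\}}$ for each $C\in\mathcal{X}$ to $\T\subseteq\tc^{\mathcal{X}}$ is not formal, because $\tc^{\mathcal{X}}$ is strictly coarser than the intersection of the topologies $\tc^{\{C\}}$: a basic set $\V_{K_{C},\epsilon_{C}}$ with $K_{C}\cap C=\emptyset$ may still meet every other component of $\mathcal{X}$. The conjugation device of Observation \ref{o: generation compact-open topology} cannot repair this, since every element of $\D(M)$ preserves $\partial M$ and permutes (for the relevant elements of $\D_{c0}(M)$, fixes) its components, so a conjugate of $\V_{K_{C},\epsilon_{C}}$ is controlled by a compact set meeting exactly the same boundary components as $K_{C}$; conjugation can never push $K_{C}$ off the remaining components of $\mathcal{X}$. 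What is actually needed is a fragmentation-type statement: given $\V_{0}$ with $\V_{0}^{n}\subseteq\V$ and sets $\V_{K_{C_{i}},\epsilon_{C_{i}}}\subseteq\V_{0}$, produce a compact $K$ disjoint from $\bigcup_{C\in\mathcal{X}}C$ and $\epsilon>0$ with $\V_{K,\epsilon}\subseteq\V_{K_{C_{1}},\epsilon_{C_{1}}}\cdots\V_{K_{C_{n}},\epsilon_{C_{n}}}$, i.e.\ a decomposition of a diffeomorphism that is small on $K$ into factors small on the respective $K_{C_{i}}$; this requires an argument (say via collars and cutting off isotopies near the boundary) that you do not supply. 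With both halves containing unproved steps, the statement remains open, exactly as in the paper.
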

  
  \subsection*{Further questions}
   The results here leave some major gaps open. To begin with, there is the issue of the dimension $3$ case, which we hope might just be a matter of ingenuity rather than heavy machinery (the reader might remember the rather naive way in which the assumption $m\neq 3$ was used). 
    
   \begin{question}
   	Suppose that $M$ is a $3$-manifold without boundary. Is the compact-open topology on $\H(M)$ minimal?   
   \end{question}
   
   A positive answer to the following question seems very plausible. 
   \begin{question}
   	Can the arguments in this work be generalized from smooth to $PL$-manifolds? 
   \end{question}
   
   Lastly, we have the most general version of the problem, which more powerful technical tools or an entirely new strategy. 
  \begin{question}
  	Is the compact-open topology on $\H(M)$ minimal for any connected topological manifold $M$ without boundary?
  \end{question}

  \bibliographystyle{plain}
  \bibliography{bibliography}
  
\end{document}

    \begin{proof}
	    Fix $g\in\SS{\hh}{\Delta}$. By assumption, there is a compactly supported diffeotopy $G: M\times I\to M$ of $Id_{M}$ preserving each of the simplices of $\Delta$ setwise at all times and whose support is contained in $M\setminus U$ for some neighbourhood $V$ of $\rl{\Delta^{(m-2)}}$ in $M$. Write $U:=V\cap\rl{\Delta}$.  
	           
	    Let $\{\sigma_{i}\}_{i=1}^{r}$ be an enumeration of $\Delta^{m-1}$. 
	    For $1\leq l\leq 2$ choose some collections of disjoint embedded $(m-1)$ balls 
	    $\{D_{i}^{l}\}_{i=1}^{r}$, so that if we write $N_{l}:=\bigcup_{i=1}^{r}\mathring{D}_{i}^{l}$, then
	    \begin{itemize}
	    	\item  $D^{1}_{i}\subseteq \mathring{D}^{2}_{i}\subseteq D^{2}_{i}\subseteq\sigma_{i}\setminus\partial\sigma_{i}$ for $1\leq i\leq r$,
	    	\item $\rl{\Delta}\subseteq N_{1}\cup U$ (and thus $N_{2}\setminus N_{1}\subseteq U$). 
	    \end{itemize}
    	    
	    The existence of tubular neighbourhoods allows us to find an embedding 
	    $$\xi:E:=N_{2}\times (-1,4)\to M,$$
	    agreeing on $N_{2}\times\{0\}$ with the projection onto the first factor, $\pi:E\to N_{2}$, such that: 
	    \begin{enumerate}
	    	\item \label{clean preimage}$\xi(\pi^{-1}(N_{2}))\cap|\Delta|=\xi(N_{2}\times\{0\})$ 
	    	\item \label{fibers}for any $p\in N_{2}$ we have $diam(\{p\}\times(-1,4))<\epsilon$.
	    \end{enumerate}  
 
      Let $f:N_{2}\to[0,2]$ some smooth map such that $f_{\restriction N_{1}}=2$, $f_{\restriction V}=0$ in the complement of some compact subset of $N_{2}$. By Lemma \ref{l: isotopy extension} applied to an isotopy of a point in $(-1,4)$ one can easily construct $\hat{h}_{1}\in\D_{c0}(E)$ diffeotopic to the identity by a compactly-supported fiber-preserving diffeotopy of $E$, and mapping $N_{2}\times\{0\}$ to the graph of $f$. 
      If we denote by $h_{1}$ the extension of the map $\xi\circ\hat{h}_{1}\circ\xi^{-1}$ to $M$ by the identity, then the push-forward of the diffeotopy above to $\xi(E)$ can be also extended by the identity, and we conclude that $h_{1}\in\w_{\epsilon}$, by (\ref{fibers}). 
      
      Fix now some smooth function $\rho:(-1,4)\to [0,1]$ with $\rho(2)=1$ and $\rho(s)=0$ outside of $(1,3)$ and consider now the map $\hat{H}: E\times I\to I$ of defined by 
      \begin{align*}
	     \hat{H}((p,s),t):= (G(p,(t+1)\rho(s)-1),s).
      \end{align*}
      It is easy to check that $\hat{H}$ is a diffeotopy of $Id_{E}$.        
       Moreover, for any $p\in N_{2}$ and $t\in I$ we have
       \begin{equation}
	       \label{eq fix} \tag{\protect\pluma}\hat{H}_{t}(p,0)=(G(p,(t+1)\rho(0)-1),0)=(G(p,-(t+1)0-1),0)=(p,0).
       \end{equation}
       For $p\in N_{1}$ and $t\in I$ we also have: 
       \begin{align}
       \label{eq conj} \tag{\protect\plumaa}
       \begin{split}
       \hat{h}_{1}^{-1}\hat{H}_{t}\hat{h}_{1}(p,0)=&\hat{h}_{1}^{-1}(\hat{H}_{t}(p,2))=\hat{h}_{1}^{-1}(G(p,(t+1)\rho(2)-1),2)=\\&\hat{h}_{1}^{-1}(G(p,t),2)=(G_{t}(p),0)
       \end{split}
       \end{align}
       Likewise, since $supp(G)\subseteq N_{1}$ and $\hat{h}_{1}$ is fiber-preserving, we conclude that 
       \begin{equation}
       	\label{eq fixfix}  \tag{\protect\plumaaa} supp(\hat{H}),supp(\hat{h}_{1}^{-1}\hat{H}_{t}\hat{h}_{1})_{t\in I}\subseteq\pi^{-1}(N_{1}),
       \end{equation}    
       One deduces from (\ref{clean preimage}), (\ref{eq fix}) and (\ref{eq fixfix}) that
       the diffeotopy $(\xi\circ\hat{H}_{t}\circ\xi^{-1})_{t\in I}$ of $\xi(E)$ extends by the identity to a compactly supported diffeotopy $H$ of $Id_{M}$ supported in the complement of the union of $\rl{\Delta}$ and some neighbourhood of $\rl{\Delta^{(m-2)}}$.  
       It follows that $h_{2}:=H_{1}$ belongs to $\PS{\hh}{\Delta}$. 
       
       Consider now $G^{*}:=M\times I\to M$ given by $G_{t}^{*}:=G_{t}^{-1}h_{1}^{-1}H_{t}h_{1}$. This is a  compactly supported diffeotopy (by Fact \ref{c: horizontal composition isotopies}). We also have:
       \begin{itemize}
       	\item $G^{*}_{-1}=Id_{M}h_{1}^{-1}Id_{M}h_{1}=1$ and $G^{*}_{1}=g^{-1}h_{1}^{-1}h_{2}h_{1}$, 
       	\item $supp(G^{*})$ is disjoint from the neighbourhood $U\cap\xi(\pi^{-1}(V))$ of $\rl{\Delta^{(m-2)}}$, by (\ref{eq fixfix}) and so, in particular, disjoint from $V=U\cap\rl{\Delta}$,
       	\item and from (\ref{eq conj}) we deduce that $G_{t}$ is the identity on $N_{1}$ for all $t\in I$, so that it is the identity on $\rl{\Delta}$, by the previous bullet point. 
       \end{itemize}
       Therefore, $G^{*}$ witnesses $g^{-1}h_{2}^{h_{1}}\in\PS{\hh}{\Delta}$, 
       which implies that
       $$g\in h_{2}^{h_{1}}\PS{\hh}{\Delta}\subseteq(\PS{\hh}{\Delta})^{h_{1}}\PS{\hh}{\Delta}\subseteq\w_{\epsilon}\PS{\hh}{\Delta}\w_{\epsilon}\PS{\hh}{\Delta},$$
       settling the first assertion of the lemma. The second one follows by the continuity of multiplication.
    \end{proof}